\newcommand{\lag}{\mathcal{L}}
\newcommand{\torus}{\mathbb{T}}
\newcommand{\tN}{\widetilde{N}}
\newcommand{\plz}{+0}
\newcommand{\F}{\mathcal{F}}
\newcommand{\blue}[1]{\textcolor{black}{#1}}
\newcommand{\yellow}[1]{\textcolor{black}{#1}}
\newcommand{\teal}[1]{\textcolor{black}{#1}}
\newcommand{\red}[1]{\textcolor{black}{#1}}
\numberwithin{theorem}{section}
\newcommand{\tphi}{\widetilde{\phi}}
\title{Caustics of weakly Lagrangian distributions}
\author{Se\'an Gomes and Jared Wunsch}
\date{\today}
\begin{document}

\begin{abstract}
We study semiclassical sequences of distributions $u_h$ associated to a Lagrangian
submanifold of phase space $\lag \subset T^*X$.  If $u_h$ is a semiclassical
Lagrangian distribution, which concentrates at a maximal rate on
$\lag,$ then the asymptotics of $u_h$ are well-understood by work of
Arnol'd, provided
$\lag$ projects to $X$ with a stable simple Lagrangian singularity.  We
establish sup-norm estimates on $u_h$ under much more general
hypotheses on the rate at which it is concentrating on $\lag$ (again
assuming a stable simple projection).  These estimates
apply to sequences of eigenfunctions of integrable and KAM Hamiltonians.
  \end{abstract}
\maketitle
\section{Introduction}
Let $X$ be a smooth $n$-dimensional manifold.  Let
$p(x,\xi) \in \CI(T^*X;\RR)$ be a Hamiltonian function, and
$P_h \in \Psi_h(X)$ a self-adjoint pseudodifferential operator with
principal symbol $p.$ If the Hamilton flow associated to $p$ is
integrable, the phase space $T^*X$ is foliated by invariant
Arnol'd--Liouville Lagrangian tori on which the flow is quasi-periodic
\cite{Arnold}; if $p$ is a perturbation of an integrable Hamiltonian,
the KAM theorem \cite{Ko:54}, \cite{Ar:63}, \cite{Mo:62}
ensures that certain invariant tori on which the
frequencies of motion satisfy a Diophantine condition still survive
the perturbation.

Now let $u_h$ be a sequence of eigenfunctions of $P_h,$ i.e., $P_h u_h=E_h u_h$
with $h\downarrow 0,$ and where $E_h=E+O(h).$ We recall that the
\emph{semiclassical wavefront set}  $\WF_h u_h$ is a measure of where, in phase
space, a sequence of eigenfunctions may concentrate as $h \downarrow 0$, and that it is
known to lie in the characteristic set $\{p=E\},$ and to be invariant
under the Hamilton flow of $p.$ $\WF_h u_h$ may thus concentrate on a single
Arnol'd-Liouville torus in integrable or near-integrable systems, and
in the case of the Diophantine tori in the latter setting, may not
concentrate on any proper subset (as it is closed and invariant under
an irrational flow).  \blue{Sequences of eigenfunctions of this type
  are thus the quantum analogue of classical states that have
  well-defined values of the commuting variables, in the integrable
  case, or that remain in quasi-periodic motion in the KAM setting.}
Some research has been devoted to
understanding the properties of these sequences of eigenfunctions concentrating on
Lagrangian tori; for instance Galkowski--Toth
\cite{GaTo:18} studied sup-norm estimates in the case in which the
system is \emph{quantum} completely integrable, with the
eigenfunctions being joint eigenfunctions of a family of commuting
operators whose symbols cut out the invariant torus. Very little is
known in the KAM case, however.

In this paper, we study the most general setting in which a family of
eigenfunctions $u_h$ may concentrate along a Lagrangian submanifold
$\lag$ of $T^*X.$ In particular, \emph{we do not assume that $u_h$ is
  a Lagrangian distribution,} i.e.\ it does not necessarily enjoy
semiclassical Lagrangian regularity; this notion (defined below) would presuppose
that the rate of concentration of $u_h$ along $\lag$ occurs at maximal
possible rate.  By contrast, we will only assume that there is
\emph{some} quantitative rate of concentration on $\lag,$ and our
results reflect this rate explicitly.  \blue{The sup norm estimates also
depend (as is well-known in the case of Lagrangian distributions) on the \emph{singularities} of the projection to the
base of the Lagrangian in question. The critical values of
the projection map $\pi:\lag \to X$ 
are referred to as a \emph{caustic}, and the concentration of mass of
$u_h$ near such points is a familiar phenomenon from everyday life, for
instance in the brighter image of a light source on the surface of one's tea
at points where rays are focused by the side of cup.  The study of such phenomena has
a long history---see, e.g., \cite[f.87]{Le}.}
While in general the critical values of $\pi$
may be quite wild, we confine our attention here to the
finite list of \emph{stable simple singularities} developed by Arnol'd
\cite[Corollary 11.5]{Ar:72}; in dimension not exceeding $5$, every
Lagrangian projection can be perturbed to have a singularity in this
list \cite[Corollary 11.7]{Ar:72}.  In the case of actual Lagrangian
distributions, our results reduce to the classical descriptions of the
asymptotics of caustics in \cite{Ar:72a},
\cite{Duistermaat:Oscillatory}, \cite{Guillemin-Sternberg1}.  By
contrast, our results are nontrivial even in the case where $\lag$
projects diffeomorphically onto the base (see \S\ref{sec:tori} below),
as the rate of concentration on the torus affects the rate of growth
strongly in every case.

We measure the rate of concentration of $u_h$ along $h$ by
an \emph{iterated regularity} definition.   Let us suppose that we
normalize to $\norm{u_h}_{L^2}=1.$   If the Lagrangian were
simply $\lag \equiv \{ x=0 \}\subset T^*\RR^n,$ the rate at which a family of
distributions concentrates on $\lag$ could be given by asking how much smaller
$x^\alpha u_h$ is than $u_h$ as $h \downarrow 0;$ we might, for
instance, ask that
$$
\smallnorm{x^\alpha u_h}_{L^2}=O(h^{(1-\delta) \smallabs{\alpha}}),
$$
for some $\delta \in [0,1].$  This is a special case of the following general
definition.  \blue{In what follows, $\Psi_h^{-\infty}(X)$ denotes the
algebra of semiclassical pseudodifferential operators on $X$ with
rapidly-decreasing symbols, and $\sigma_h\colon \Psi_h^{-\infty}(X)
\to \CI(T^*X)$ denotes the principal symbol map \cite[Chapter 14]{Zw:12}.}

\begin{definition}
  Let $\lag \subset T^*X$ be a compact Lagrangian submanifold and let
  $\delta \in [0,1].$ We say that $u_h$ is a  $\delta$-Lagrangian distribution with respect to $\lag,$ if for all $N$ and all
  $A_1, \dots A_N \in \Psi_h^{-\infty}(X)$ such that $\sigma_h(A_j)=0$
  on $\lag,$ $u_h$ enjoys the iterated regularity property
  $$
\norm{A_1 \dots A_N u_h}_{L^2(X)} \leq C_N h^{N(1-\delta)},\quad h \in (0, 1).
$$
\end{definition}
When $\delta=0$ this is the usual definition of semiclassical
Lagrangian regularity---cf.\ \cite{Alexandrova:Semiclassical}.  When
$\delta=1$ the definition is satisfied for any $u_h \in L^2(X).$ 
For
intermediate values of $\delta$ we thus have a notion of partial
Lagrangian regularity, encoding a concentration of the states in
question on a Lagrangian submanifold at a variable rate.
(We do not consider $\delta>1,$ as this would not be achievable with $u_h$ compactly
microsupported, by the uncertaintly principle.)

Our main results are local sup-norm estimates for a semiclassical family of
distributions $u_h$ that are $\delta$-Lagrangian with respect to
$\lag,$ where $\lag$ has a singular projection given by one of the
stable simple singularities listed in Table~\ref{table:classification}
below.  There are two versions of these estimates: in the first, we
make no further assumptions, but in the second, stronger, estimate, we additionally assume that $u_h$
satisfies an approximate eigenfunction equation (where we have now absorbed the
eigenparameter into the operator)
$$
P_h u_h=O_{L^2}(h)
$$
where $\sigma(P_h)=0$ on $\lag.$ Our estimates all involve a constraint on
$\delta:$ it cannot exceed a threshold $\delta_0$ that depends on the
form of the caustic (but is equal to $1$ in the nonsingular case).
Beyond this threshold, the phenomenology seems intriguingly different,
and for the special case of the fold singularity, we also give
estimates for $\delta>\delta_0,$ and see that there is indeed a change
of qualitative behavior of extremizers (\S\ref{sec:beyond}).

In the next section, we describe our results in the special case of
the rectangular flat torus.  In this setting, they are far from
sharp, with improvements available using number-theoretic
tools.
We then recall the general geometric setting of stable simple Lagrangian
singularities, and proceed to the proofs of the main theorems.  The
main ingredients here are, first, a recapitulation of the
H\"ormander--Melrose theory of Lagrangian distributions in the setting
considered here, with limited regularity. This allows us to write
a $\delta$-Lagrangian distribution $u_h$ as an oscillatory integral in which
the amplitude function is not uniformly smooth as $h\downarrow 0$ but
rather lies in an $h$-dependent symbol class satisfying
$$
h^{-\delta \smallabs{\alpha}} \pa^\alpha a \in h^{-\gamma} L^\infty
$$
for some $\gamma.$
We then estimate the size of the function on the caustic by estimating
the resulting oscillatory integral.  This integral estimate is well-known
when $\delta=0$ (i.e., the standard Lagrangian case)---see  \cite{Ar:72a},
\cite{Duistermaat:Oscillatory}, \cite{Guillemin-Sternberg1}.  In the
case at hand, however, the usual proof of this classical result fails
to yield a sharp result: 
it employs the
Malgrange Preparation Theorem in an essential way, and this entails a
hard-to-quantify number of derivatives falling on the amplitude,
incurring $h^{-\delta}$ penalties each time.  We thus employ a
different, cruder method that so far as we know is novel, where we
split the integral into pieces to
estimate sup-norms rather than obtaining the precise asymptotics \blue{along
the caustic} that are
part of the classical theory.

Our main result is as follows.
\begin{theorem}\label{theorem:main}
  Let $u_h$ be a $\delta$-Lagrangian distribution with respect to a
  Lagrangian $\lag,$ microsupported in a set where the projection
  of $\lag$ has a singularity that is Lagrange-equivalent to one of
  the \yellow{stable simple} singularities listed in
  Table~\ref{table:intro-orders}.  Assume that $\delta<\delta_0$ for
  the corresponding threshold $\delta_0$ listed in the table.  Then
  \blue{there exists $C$ such that for all $h \in (0,1),$}
  $$
\frac{\smallnorm{u_h}_{L^\infty}}{\smallnorm{u_h}_{L^2}}\leq C h^{-\kappa-n\delta/2}
  $$
  where $\kappa$ is the order listed in
  Table~\ref{table:intro-orders}.

  If it is further the case that
  $$
P u=O(h)
$$
where $P$ is an operator of real principal type whose principal symbol
vanishes on $\lag,$ then \blue{for all $\ep>0$ there exists $C_\ep$ such that for all $h \in (0,1),$}
  $$
\frac{\smallnorm{u_h}_{L^\infty}}{\smallnorm{u_h}_{L^2}}\leq C_\ep h^{-\kappa-(n-1)\delta/2-\ep}.
  $$
\end{theorem}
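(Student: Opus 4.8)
The first estimate of the theorem being already in hand, it suffices to explain how the additional hypothesis $Pu_h=O_{L^2}(h)$ buys the improvement. The plan is to exploit the fact that, since the principal symbol $p$ of $P$ vanishes on $\lag$, the Hamilton vector field $H_p$ is tangent to $\lag$, and that the approximate transport equation forces $u_h$ to be nearly invariant under the bicharacteristic flow on $O(1)$ time scales: from $e^{itP/h}u_h-u_h=\tfrac{i}{h}\int_0^t e^{isP/h}(Pu_h)\,ds$ and $\norm{Pu_h}_{L^2}=O(h)$ one gets $\norm{e^{itP/h}u_h-u_h}_{L^2}=O(|t|)$. Heuristically this means $u_h$ is not merely concentrated on $\lag$ but is smeared out along the one-dimensional flow lines inside $\lag$, so that, as far as sup-norm growth is concerned, it behaves like a $\delta$-Lagrangian distribution in $n-1$ rather than $n$ variables — which is precisely what turns the penalty $h^{-n\delta/2}$ into $h^{-(n-1)\delta/2}$.

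To turn this into a proof I would work in the oscillatory-integral representation coming from the H\"ormander--Melrose reduction used for the first estimate: microlocally near a point $\rho_0\in\lag$ lying over the caustic, $u_h(x)=(2\pi h)^{-m}\int_{\RR^k}e^{i\phi(x,\theta)/h}a(x,\theta;h)\,d\theta$ with $\phi$ a nondegenerate phase function for $\lag$ and $a$ in the $h$-dependent symbol class $\pa^\alpha a=O(h^{-\gamma-\delta\smallabs{\alpha}})$. Substituting this into $P$ and running the usual stationary-phase/transport computation, the relation $Pu_h=O(h)$ forces a transport equation of the form $\mathcal T a=O(h^{1-\gamma})$ on the critical manifold $C_\phi\cong\lag$, where $\mathcal T$ is essentially the Lie derivative along $H_p$ — well-defined on $C_\phi$ precisely because $H_p$ is tangent to $\lag$. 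Consequently, in coordinates $(t,y)$ on $C_\phi$ adapted to the flow, $a(t,y;h)=a_0(y;h)+O(h^{1-\gamma})$ uniformly for $t$ in an interval of length $\asymp 1$, with $a_0$ still in the $\delta$-symbol class in the transverse variables $y$. One then splits $u_h=u_h^0+u_h^r$ accordingly: the remainder $u_h^r$ is again $\delta$-Lagrangian but smaller by a power of $h$, so the first estimate disposes of it; and for $u_h^0$ the amplitude no longer depends on the flow variable, so on re-running the piecewise bound for the caustic integral only $n-1$ of the base directions incur the $h^{-\delta/2}$ concentration penalty, producing the exponent $-\kappa-(n-1)\delta/2$. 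The loss $h^{-\ep}$ is the price of rigor: it enters, for instance, through a semiclassical propagation/restriction estimate if one realizes the gain by restricting $u_h$ to a hypersurface transverse to the flow and averaging along the flow, or through the dyadic decomposition in the transverse directions needed to convert a flow-averaged bound into a genuine pointwise one.

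The main obstacle is that this reduction cannot be carried out downstairs on $X$: the projected flow $\pi_*H_p$ may vanish at $\rho_0$ even when $P$ is of real principal type with $p|_\lag=0$ — this already happens for the one-dimensional fold, with $p=x_1+3\xi_1^2$ — so there is no coordinate straightening of the flow on the base and no honest reduction to an $(n-1)$-dimensional caustic problem. The flow-adapted splitting therefore must be performed in the parameter space of the oscillatory integral, and the delicate points are: (i) checking that the decomposition $a=a_0+(a-a_0)$ is compatible with the $h$-dependent symbol calculus — in particular that, after integration, $u_h^r$ is genuinely $\delta$-Lagrangian with the gained power of $h$, which forces one to arrange $a-a_0$ to vanish to high order along $C_\phi$ and to use stationary phase with an $h$-dependent amplitude; and (ii) threading the extra factor $h^{\delta/2}$ through the piecewise estimate for the caustic integral without spoiling the sharp power $h^{-\kappa}$ obtained there, uniformly across the finite list of stable simple singularities. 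Quantifying the passage from the flow-averaged bound to the pointwise bound with only an $h^{-\ep}$ loss, for each caustic in the list, is where the bulk of the work lies.
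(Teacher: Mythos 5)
Your proposal only addresses the quasimode improvement and declares the first estimate ``already in hand,'' but that estimate is the bulk of the theorem and nothing in your argument supplies it. It requires (a) representing a $\delta$-Lagrangian distribution as an oscillatory integral whose amplitude lies in the $h$-dependent class $S_\delta^{N/2+n\delta/2}$ (the rough-symbol version of the H\"ormander--Melrose theory, Proposition~\ref{HM1}), and (b) the bound $O(h^{-\kappa})$ for the model caustic integrals with $S^0_\delta$ amplitudes (Theorem~\ref{theorem:sufficient}), which the paper proves by quasi-homogeneous rescaling and an induction over Arnol'd's subordination diagram precisely because the classical Malgrange-preparation route costs an uncontrolled number of derivatives, each worth $h^{-\delta}$. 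Without these ingredients even the unconditional bound $h^{-\kappa-n\delta/2}$ is unproved, so the reduction ``it suffices to explain the improvement'' does not stand on its own.

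Within the quasimode part, your heuristic (transport along $H_p$ removes one direction of concentration) is the right one, but the mechanism you describe does not connect to where the factor $h^{-n\delta/2}$ actually arises. That factor is not a count of base directions in the caustic integral: it is the Sobolev-embedding loss incurred when converting $L^2$ bounds on the transformed amplitude $v_h=e^{iH/h}\F_h u_h$ into $L^\infty$ bounds (Lemma~\ref{lemma:semisobolev}), and the caustic estimate of Theorem~\ref{theorem:sufficient} sees only the symbol order, not any flow-invariance structure. So making the leading amplitude independent of the flow variable on $C_\phi$ and ``re-running the piecewise bound'' does not by itself yield the exponent $-\kappa-(n-1)\delta/2$; what is needed is that the amplitude lies in the improved class $S_\delta^{N/2+(n-1)\delta/2+0}$, which the paper obtains via the mixed iterated-regularity estimate $\|PA_1\cdots A_N u_h\|_{L^2}=O(h^{N(1-\delta)+1-\ep})$ (Lemma~\ref{HM.mixed}, the actual source of the $\ep$-loss), an energy estimate for the conjugated evolution equation satisfied by $v_h$, and Sobolev embedding in only $n-1$ variables (Proposition~\ref{prop:quasi}). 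Your alternative route---deriving a transport equation for $a$ by stationary phase and splitting $a=a_0+r$---is exactly the derivative-hungry manipulation the class $S_\delta$ punishes: the transport equation can only be extracted with relative error $O(h^{1-2\delta})$, you have not verified that $r$ is again $\delta$-Lagrangian with $L^2$ norm smaller by the required factor $h^{\delta/2}$, the scheme fails outright in the projectable case $A_1$ where $\delta$ may exceed $1/2$ and stationary phase with $S_\delta$ amplitudes breaks down (the paper instead uses the zero-phase-variable representation of Propositions~\ref{HM2} and~\ref{HM2.quasi}), and, as you yourself observe, $\pi_*H_p$ degenerates at the caustic, so converting invariance along a vector field tangent to $\lag$ (which mixes base and fiber directions) into a pointwise gain for the oscillatory integral is precisely the missing step rather than a technicality.
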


\def\arraystretch{1.5}
\begin{table}\label{table:intro-orders}
\begin{equation*}
\begin{array}{||l|l|l||}\hline\hline
  \text{Type}\ &  \text{Order}\ \kappa & \text{Threshold}\ \delta_0\\ \hline
A_{m+1} & \frac 12-\frac{1}{m+2}& \frac{1}{m+2},\ (m>0);\  1,\ (m=0)\\ \hline
  D_{m+1}\ (m \text{ even}), D^-_{m+1}\ (m\text{ odd}) & \frac 12 -\frac{1}{2m}& \frac 1{m+1}\\ \hline
    D^+_{m+1}\ (m\text{ odd})& \frac 12 -\frac{1}{2m}& \frac 1m\\ \hline
  E_6 &  \frac 5{12}& \frac 16\\ 
  \hline E_7 & \frac 49 &\frac 17\\
  \hline E_8 & \frac{7}{15} & \frac 18\\
\hline\hline
\end{array}
\end{equation*}
\caption{Orders of caustics and thresholds of Lagrangian regularity.}
\end{table}

\thanks{The authors are grateful to Steve Zelditch for helpful
  discussions and to Ilya Khayutin for explaining
  the number-theoretic literature on lattice point counting in
  shrinking spherical caps (Section~\ref{sec:tori}).
  \yellow{St\'ephane Nonnenmacher as well as two anonymous
  referees made many helpful suggestions on the exposition; one of
  the latter
pointed out an error in the inductive step proving the main theorem.
  JW gratefully
  acknowledges partial support from Simons Foundation grant 631302 and
  from NSF grant DMS--1600023.
}

\section{Flat tori}\label{sec:tori}
As an illustration of the effects of weak Lagrangian regularity on
sup-norm estimates in a geometrically simple setting, we directly
prove
our main results in the
special case of square flat tori: $X=\RR^n/2\pi \ZZ^n.$
For each $\alpha \in (\RR^n)^*,$ let $e_\alpha(x)=e^{-i\alpha x}$ denote the corresponding complex exponential.

Fix a frequency vector $\omega \in (\RR^n)^*.$  \blue{Employing canonical
coordinates $(x,\xi)$ on $T^*X,$} we will consider the
Lagrangian
$$
\lag=\{\xi=\omega\} \subset T^* X.
$$
A normalized $\delta$-Lagrangian sequence is thus a sequence of functions $u_j$ on $\torus^n$ such that
$$
\norm{u_j}_{L^2}=1
$$
and such that for appropriately chosen $h\equiv h_j \downarrow 0$ and
any $N$ and choice of indices $k_1,\dots k_N \in \{1,\dots, n\},$
\begin{equation}\label{iterativetorus}
\big(h^{-1+\delta} (h D_{k_1} -\omega_{k_1})\big)\dots \big(h^{-1+\delta} (h D_{k_N} -\omega_{k_N}) \big) u_{\red{j}}=O_{L^2}(1)\text{ as } j \to \infty.
\end{equation}
\blue{We return to the notation $u_h$ for the sequence of functions, bearing in
  mind that $h=h_j \downarrow 0$ through a discrete sequence of values.}
(Note that the general definition of Lagrangian regularity would allow \emph{any} operators
characteristic on $\lag,$ rather than the specific operators
$hD_j-\alpha_j$ used here; however by elliptic regularity, it suffices
to consider just this set of test operators whose symbols are a set of
defining functions for $\lag.$)
Note that one immediate consequence of the assumption
\eqref{iterativetorus} is a crude $L^\infty$ estimate based on Sobolev
embedding: this estimate yields $D^\alpha
u_h=O_{L^2}(h^{-\smallabs{\alpha}}),$ hence certainly
\blue{\begin{equation}\label{torussobolev}
\sup \smallabs{u_h} =O(h^{-n/2+\ep})\smallnorm{u_h}_{L^2}
\end{equation}
for all $\ep>0.$}

We now write $u_h$ as the Fourier series
$$
\sum_{\alpha\in \ZZ^n} a_\alpha(h) e_\alpha(x).
$$
Fixing any $\delta'>\delta,$ we split
$$
u_h=v_h+w_h
$$
where
\begin{align*}
v_h&=\sum_{\smallabs{\alpha-h^{-1}\omega}<h^{-\delta'}} a_\alpha(h)
e_\alpha(x),\\
w_h&=\sum_{\smallabs{\alpha-h^{-1}\omega}\geq h^{-\delta'}} a_\alpha(h) e_\alpha(x).
\end{align*}
Since they are orthogonal, the estimate \eqref{iterativetorus} applies
to both $v_h$ and $w_h$ separately.
Taking $k_j=k$ all the same, this yields for the Fourier series of $w_h$ the estimate (for
each $k$)
$$
\sum_{\smallabs{\alpha-h^{-1}\omega}\geq h^{-\delta'}} \big[h^{\delta} (\alpha_k-\omega_k/h)\big]^N\smallabs{a_\alpha}^2=O(1);
$$
adding up the estimates for $k=1,\dots, n$ and using the comparability
of $\sum_1^n \smallabs{x_j}^N$ and $\smallabs{x}^N$ yields
$$
\sum_{\smallabs{\alpha-h^{-1}\omega}\geq h^{-\delta'}} \big[h^{\delta} \abs{\alpha-\omega/h}\big]^N\smallabs{a_\alpha}^2=O(1),
$$
i.e.,
$$
\sum_{\smallabs{\alpha-h^{-1}\omega}\geq h^{-\delta'}} h^{N(\delta-\delta')} \smallabs{a_\alpha}^2=O(1),
$$
hence
$$
\norm{w_h}_{L^2}=O(h^\infty).
$$
By \eqref{torussobolev}, then
$$
\norm{w_h}_{L^\infty}=O(h^\infty),
$$
and we need only consider $v_h$ in our estimates henceforth.

To estimate $v_h,$ we let
$$
N_{\red{\mu}}(h) = \#\big\{ \alpha \in \ZZ^n\colon\smallabs{\alpha-h^{-1} \omega}< h^{-\red{\mu}}\big\}
$$
\red{for $\mu\in(0,1]$.} \red{From the leading term in the Gauss circle problem, we have } $N_{\mu}(h)\sim C h^{-n \mu}$ \red{for a constant $C>0$ that depends only on $n$.}  Thus, since $u_h$ is $L^2$-normalized, we easily see by Cauchy--Schwarz that 
$$
\norm{v_h}_{L^\infty} \leq \sqrt{N_{\delta'}(h)}=O(h^{-n\delta'/2}).
$$
We have thus obtained
$$
\norm{u_h}_{L^\infty} \leq \sqrt{N_{\delta'}(h)}=O(h^{-n\delta/2-\red{\ep}})
$$
\red{for any $\ep>0$, as $\delta'>\delta$ can be chosen arbitrarily}.
This bound is achieved (up to an epsilon power) by taking all
$a_\alpha=N_{\red{\delta'}}(h)^{-1/2}$ for $\alpha$ such that
$\abs{\alpha-\omega/h}\leq Ch^{-\delta'},$ and zero otherwise.

This is, up to a loss of $h^{-\epsilon},$ precisely the special case of Theorem~\ref{theorem:main} for projectable
Lagrangians (the case $A_1$).
 When $\delta=1$ we essentially get the counting function for
 eigenfunctions in a large ball, but when $\delta=0$ we get $O(1),$
 the estimate for actual Lagrangian distributions associated to a projectable Lagrangian.

Note that we could recover the $\epsilon$ lost
here relative to the sharp statement of Theorem~\ref{theorem:main} by using Cauchy--Schwarz, somewhat as in
Lemma~\ref{lemma:semisobolev} below.  We have preferred to give a
treatment that emphasizes the role of simply counting lattice points
in domains in $\RR^n$, however; in
particular, this point of view makes the improvement in the result
very clear when we assume that the $u_{h_j}$ are Laplace eigenfunctions,
i.e.,
$$
(h_j^2\Lap-1)u_{h_j}=0.
$$  The point is that this gives us more precise localization in one
direction (conormal to the characteristic set).  In that case, $v_h$
now consists only of a sums as above with the further constraint $\smallabs{\alpha}=h^{-1},$
hence the $L^\infty$ estimate is replaced by $\sqrt{\tN_{\delta'}(h)}$
where $\delta'>\delta$ and
\begin{equation}\label{tN}
  \tN_{\red{\mu}}(h) = \#\big\{ \alpha \in \ZZ^n\colon \smallabs{\alpha}=h^{-1},\ \smallabs{\alpha-h^{-1} \omega} \leq C h^{-\red{\mu}}\big\}
\end{equation}
\red{for $\mu\in(0,1]$.} (Now of course we take $\omega$ only with $\smallabs{\omega}=1.$)
This quantity is a little subtler to estimate than $N_{\red{\mu}}(h).$

To obtain an improved upper bound on $\tN_{\red{\mu}}(h),$ we note that just as
with the usual Gauss method for the circle problem, we may bound it by
the sum of volumes of unit boxes centered at all lattice points in the
set on the right side of \eqref{tN}, and that this is in turn bounded
by the volume of the set
$$
\{ \alpha \in \RR^n\colon \lvert \smallabs{\alpha}-h^{-1}\rvert<C,\ \smallabs{\alpha-h^{-1} \omega} \leq C h^{-\red{\mu}}\big\}.
$$
(Indeed, this estimate applies even if $u_h$ is an $O(h)$ quasimode of
$h^2\Lap-1$.)
The result is comparable to the volume of the subset of the sphere of
radius $h^{-1}$ on which $\smallabs{\alpha-h^{-1} \omega} \leq C
h^{-\red{\mu}},$ i.e.\ we get
\begin{equation}\label{tNbound}
\tN_\mu(h) =O(h^{-(n-1)\mu}).
\end{equation}
Thus, using this estimate for $\tN_{\delta'}$ on the function $v_h$ in
our splitting, yields a sup-norm estimate for eigenfunctions (which would also apply
for $O(h)$ quasimodes) as follows:
\begin{equation}\label{torus:eigenfunction}
\norm{u_h}_{L^\infty} \leq \sqrt{\tN_{\red{\delta'}}(h)}=O(h^{-(n-1)\delta/2-\ep})
\end{equation}
\red{for any $\ep>0$, as $\delta'>\delta$ can be chosen arbitrarily.}
Again this recovers a special case of Theorem~\ref{theorem:main}.  But
this result is not, in this special case, optimal. We motivate the
optimal result by a crude lower bound.
\begin{lemma}\label{lemma:toruslattice}
  For any $\delta \in (0,1]$ and in any dimension $n \geq 1,$ there exists a sequence of $h \downarrow 0$ such that
  $$
\tN_\delta(h) \geq C h^{1-(n-1)\delta}.
$$
  \end{lemma}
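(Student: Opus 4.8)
The plan is to prove this by a pigeonhole argument over a cube of lattice points. The key point is that we need the estimate only along \emph{some} sequence $h\downarrow 0$, so it suffices to bound from below the \emph{average} number of lattice points on a sphere that lie in a small cap, which needs no equidistribution input. Writing $R=h^{-1}$ and recalling $\smallabs{\omega}=1$, the claim is that for infinitely many $R$ the sphere of radius $R$ carries at least $cR^{(n-1)\delta-1}$ lattice points within distance $CR^\delta$ of $R\omega$; this is precisely the count predicted by the heuristic that the sphere of radius $R$ carries about $R^{n-2}$ lattice points, of which a fraction $\sim(R^\delta/R)^{n-1}$ should fall in a cap of radius $R^\delta$. (When $(n-1)\delta\le 1$ the assertion is essentially trivial --- a single lattice point within $O(1)$ of $R\omega$ lies in the cap once $R$ is large --- so the real content is $(n-1)\delta>1$; the argument below covers both cases uniformly.)

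First I would fix a small $\ep_0>0$, to be chosen at the very end depending only on $n$ and on the constant $C$ in the definition \eqref{tN}. Given a large integer $Q$, choose $x^{(0)}\in\ZZ^n$ with $\smallabs{x^{(0)}-Q\omega}\le\sqrt n/2$, and let $B$ be the set of lattice points in the cube of side $2\ep_0 Q^\delta$ centered at $x^{(0)}$. For $Q$ large we have $\#B\ge(\ep_0 Q^\delta)^n$. On the other hand, expanding
$$
\smallabs{x}^2=\smallabs{x^{(0)}}^2+2\langle x^{(0)},x-x^{(0)}\rangle+\smallabs{x-x^{(0)}}^2
$$
and using $\smallabs{x^{(0)}}\le Q+\sqrt n/2$ together with $\smallabs{x-x^{(0)}}\le\sqrt n\,\ep_0 Q^\delta$ for $x\in B$, one sees that $\smallabs{x}^2$ takes at most $C_n\ep_0 Q^{1+\delta}$ distinct integer values as $x$ ranges over $B$. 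By pigeonhole there is an integer $m_0$ with
$$
\#\{x\in B:\ \smallabs{x}^2=m_0\}\ \ge\ \frac{(\ep_0 Q^\delta)^n}{C_n\,\ep_0 Q^{1+\delta}}\ =\ \frac{\ep_0^{\,n-1}}{C_n}\,Q^{(n-1)\delta-1}.
$$
Setting $R_0=\sqrt{m_0}$ and using $m_0=\smallabs{x^{(0)}}^2+O(\ep_0 Q^{1+\delta})=Q^2+O(\ep_0 Q^{1+\delta})$ gives $R_0=Q\bigl(1+O(\ep_0)\bigr)$ and $\smallabs{R_0-Q}\le C_n\ep_0 Q^\delta$; in particular $Q/2\le R_0\le 2Q$ for $\ep_0$ small and $Q$ large.

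It then remains only to see that the points counted above lie in the cap defining $\tN_\delta(1/R_0)$: for $x\in B$,
$$
\smallabs{x-R_0\omega}\ \le\ \smallabs{x-x^{(0)}}+\smallabs{x^{(0)}-Q\omega}+\smallabs{(Q-R_0)\omega}\ \le\ C_n\ep_0 Q^\delta\ \le\ 2C_n\ep_0\,R_0^\delta ,
$$
using the previous estimates and $Q\le 2R_0$. Choosing $\ep_0$ so small that $2C_n\ep_0\le C$, every $x\in B$ with $\smallabs{x}^2=m_0$ is counted by $\tN_\delta(1/R_0)$, so with $h=1/R_0$,
$$
\tN_\delta(h)\ \ge\ \frac{\ep_0^{\,n-1}}{C_n}\,Q^{(n-1)\delta-1}\ \ge\ c\,R_0^{(n-1)\delta-1}\ =\ c\,h^{1-(n-1)\delta},
$$
since $R_0$ is comparable to $Q$. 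Letting $Q\to\infty$ yields a sequence $h\downarrow 0$ along which the bound holds, with $c=c(n,\delta)>0$ serving as the constant of the lemma.

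I do not expect a genuine obstacle here: the argument is elementary, and the only point that needs care is the bookkeeping of error terms --- in particular keeping $\ep_0$ small enough that the cube $B$, after the radius is adjusted from $Q$ to $R_0$, still sits inside the cap of radius $CR_0^\delta$. It is worth stressing, though, why the bound is only a lower bound and far from sharp as an upper bound: to control $\tN_\delta(h)$ from above for \emph{all} large $R$ one would have to bound the number of lattice points on the sphere of radius $R$ in a cap of radius $R^\delta$ by its average order $R^{(n-1)\delta-1}$, rather than by the trivial shell-volume bound $R^{(n-1)\delta}$ of \eqref{tNbound} --- a genuinely number-theoretic question, and the gap of size $R$ between these is exactly what the crude pigeonhole above concedes.
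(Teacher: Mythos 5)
Your argument is correct and is essentially the same averaging-and-pigeonhole argument as the paper's: the paper counts lattice points in a dyadic annular cap sector $\Omega_J$ and pigeonholes over the $\sim J$ admissible values of $\smallabs{\alpha}^2$, while you count points in a thin cube of side $\sim \epsilon_0 Q^{\delta}$ near $Q\omega$ and pigeonhole over the $\sim \epsilon_0 Q^{1+\delta}$ values of $\smallabs{\alpha}^2$ occurring there, in both cases extracting one sphere carrying at least the average $\gtrsim R^{(n-1)\delta-1}$ points of the cap. The only substantive difference is bookkeeping: your cube makes the count-versus-volume comparison trivial and lets you absorb the constant $C$ in \eqref{tN} by taking $\epsilon_0$ small.
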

\blue{Setting
    $$
f_h = \sum_{\substack{\smallabs{\alpha}=h^{-1}\\ \smallabs{\alpha-h^{-1} \omega} \leq C
    h^{-\delta}}} e_\alpha
$$
yields
$$
\norm{f_h}_{L^\infty} = f_h(0) = \tN_\delta(h)
$$
and, by orthogonality,
$$
\norm{f_h}_{L^2} = \sqrt{\tN_\delta(h)}.
$$
Thus, setting $u_h=f_h/\smallnorm{f_h}_{L^2},$ Lemma~\ref{lemma:toruslattice} shows that for an
$L^2$-normalized 
$\delta$-Lagrangian sequence of Laplace eigenfunctions on the torus we can achieve
\begin{equation}\label{LB}
\norm{u_h}_{L^\infty}\geq C h^{1/2-(n-1)\delta/2}.
\end{equation}}
  \begin{proof}[Proof of lemma]
    For $j \in \NN,$ let $$M(j) =\#\big\{ \alpha \in \ZZ^n\colon \smallabs{\alpha}^2=j,\ \smallabs{\alpha-j^{1/2} \omega} \leq j^{\delta/2}\}.$$  Thus,
    $$
\tN(j^{-1/2}) = M(j).
$$
Now
\begin{equation}\label{latticesum}
\sum_{J \leq j \leq 2J} M(j)=\# \ZZ^n \cap \Omega_J
\end{equation}
where
$$
\Omega_J \equiv  \big \{ r \theta \in \RR^n\colon r \in [\sqrt{J},
\sqrt{2J}],\ \smallabs{\theta-r\omega}<r^\delta \big \}.
$$
The quantity \eqref{latticesum} is comparable to the volume of the
solid in question (again by counting enclosed unit cubes), hence
$$
\sum_{J \leq j \leq 2J} M(j) \geq  C\int_{\sqrt{J}}^{\sqrt{2J}} (r^\delta)^{n-1} \, dr \sim C J^{1/2+(n-1)\delta/2}.
$$
On the other hand there are $J$ terms in the sum, so one of them must be at least
$$
 J^{-1/2+(n-1)\delta/2}.
 $$
Using this procedure to pick a sequence of $h=j^{-1/2}$ in the dyadic intervals $(J,2J)=(2^k, 2^{k+1})$ gives the desired sequence.
  \end{proof}

In dimension $n \geq 5,$ if for $m \in \NN$ we let $r_n(m)$ denote the number
of integer lattice points on the sphere of radius $m^{1/2},$  it is known
that there exist positive constants $c_n, C_n$ such that
$$
c_n m^{n/2-1} \leq r_n(m) \leq C_n m^{n/2-1}
$$
Thus, the number of lattice points on the sphere of radius $h^{-1}$ is
comparable to $h^{-n+2}$ for $n \geq 5.$
If we then multiply by the fraction of the volume of the sphere that
is occupied by the cap of size $h^{-\delta}$ we obtain a heuristic estimate
exactly of order $h^{1-(n-1)\delta}.$  This is indeed also known
to be essentially an upper bound, for sufficiently large $\delta$:
Bourgain--Rudnick \cite[Proposition 1.4]{BoRu:09}
show that for $n \geq 5,$
for $\delta \in [1/2,1],$
for all
$\ep>0$ there exists
$C=C_\ep$ such that for all $h$
$$
\tN_\delta(h) \leq C h^{1-(n-1)\delta-\ep}.
$$
(Similar results for the special cases $n=3,4$ are also obtained in \cite{BoRu:09}.)
Optimal lower bounds on $\tN_\delta(h)$ of the form \red{of the first equation in Lemma \ref{lemma:toruslattice}} (uniform in
radius, rather than along a subsequence as deduced above) have
recently been obtained by Sardari \cite[Corollary 1.9]{Sa:19};
see also the celebrated work of Duke \cite{Du:88} and Iwaniec
\cite{Iw:87} in the special case of dimension $3$.

%   The lower bound along a subsequence provided by
% Lemma~\ref{lemma:toruslattice} agrees with the heuristic that the
% count should be the total number of lattice points on the sphere times
% the fractional volume over
  
% Note that in the extreme case $\delta=1,$ $\tN_1(h)$ just counts
% the lattice points on the sphere of radius $h^{-1}$ in some fixed
% angular sector (which could be the whole sphere for $C$ sufficiently
% large.  In dimension $n \geq 5,$ if for $m \in \NN$ we let $r_n(m)$ denote the number
% of integer lattice points on the sphere of radius $m^{1/2},$  it is known
% that there exist positive constants $c_n, C_n$ such that
% $$
% c_n m^{n/2-1} \leq r_n(m) \leq C_n m^{n/2-1}
% $$
%  (see \cite[Equation (9.20)]{Gr:85}); subtler, more oscillatory
%  behavior holds in lower dimension \cite{Gr:85}.  In our notation,
%  then, 
%  $$
% c_n h^{-n+2} \leq \tN_1(h) \leq C_n h^{-n+2},\quad n \geq 5;
% $$
% This agrees with our lower bound in Lemma~\ref{lemma:toruslattice} in
% the special case $\delta=1,$ where it follows from equidistribution
% results \cite{Po:59}, \cite{GoFo:85}; the analogous results in
% dimension $3$ are a considerably subtler result of Duke \cite{Du:88}
% (see also \cite{Iw:87}).

\section{\yellow{Stable simple} singularities of Lagrangian projections}\label{section:stable}

\yellow{We now return to the general geometric setting of a
  non-projectable Lagrangian (i.e., the projection map is not assumed
  to be a diffeomorphism), and recall the normal forms of \emph{stable
    simple} singularities of Lagrangian projections as developed by
  Arnol'd \cite[Corollary 11.8]{Ar:72}, \cite{Ar:72a}.
  We will in fact use the alternative parametrizations of the
  Lagrangians given by Duistermaat \cite[Theorems~3.1.1 and
  3.2.1]{Duistermaat:Oscillatory}.  We recall first the notion of
  local \emph{Lagrange-equivalence}: two Lagrangians in $T^*X$
  are locally equivalent if they can be mapped one to another by
  a fiber-preserving local symplectomorphism of $T^*X.$
  \emph{Stability} of a
  Lagrangian projection means that nearby (in the
  $\mathcal{C}^\infty$ topology) Lagrangians are locally Lagrange-equivalent
  to the original.  The \emph{simple} singularities are those
  that under
  perturbation can be locally equivalent to only a finite list of
  singularities at nearby points \cite[Definition 11.1]{Ar:72}.
  Stability does not imply simplicity nor conversely in general, but
  stability does imply simplicity in dimension up to $5.$
  Thus the classification is in fact an exhaustive list of the stable
  singularities in these dimensions; moreover every Lagrangian in dimension up
  to $5$ can be locally perturbed to be equivalent to one in this
  list (stable Lagrangians are dense).  We refer the reader to \cite{Duistermaat:Oscillatory} and
  to \cite{Ar:72}, \cite{Ar:72a} for further details on the notions of stability and
  simplicity, and the classification.}

We recall that every Lagrangian manifold $\lag$ of $T^*\RR^n$ may locally be
parametrized in the following form:
$$
\lag=\big\{(x, \phi'_x(x,\theta)) \colon \phi'_\theta(x,\theta)=0\big\}.
$$
\yellow{Two phase functions $\phi$ and $\tphi$ are
easily seen to parametrize Lagrange-equivalent Lagrangians if
\begin{equation}\label{equiv}
  \tphi(x,\theta)=\phi(x', \theta')+\psi(x')
\end{equation}
  for some fiber-preserving local diffeomorphism $$(x,\theta) \mapsto
  (x'(x), \theta'(x,\theta)),$$ and $\psi
  \in\mathcal{C}^\infty.$  In \cite{Duistermaat:Oscillatory} this is
  referred to as \emph{equivalence of unfoldings} of the Lagrangian
  singularities, and it is as a classification of unfoldings up to the
  equivalence \eqref{equiv} that the classification is
  phrased in that work and in this form that we will employ it: every
phase function parametrizing a stable simple singularity is locally
equivalent to one in the Table~\ref{table:classification} (whose entries we
explain below) in the sense
\eqref{equiv}.}

Duistermaat \cite{Duistermaat:Oscillatory} parametrizes the
stable simple singularities in $\RR^n$ with phase functions
$$
\phi(x,\theta) = \sum_{j=1}^n x_j  f_j(\theta) + f(\theta)
$$
where $f_j,$ $f$ are given by Table~\ref{table:classification} (taken from \cite[Theorem
3.1.1 and Theorem 3.2.1]{Duistermaat:Oscillatory}); here $n$ is the
dimension, and $k$ is the number of phase variables $\theta$
(whose least possible value for each singularity is listed in the table); the $f_j$'s beyond
those enumerated ($f_1,\dots, f_m$ for the $A_{m+1}$ and $D^\pm_{m+1}$) are
taken to equal $0;$ the variables $\theta'$ are the remaining
$\theta\in \RR^k$
variables beyond those appearing explicitly ($\theta_2,\dots, \theta_k$ for $A_{m+1};$ $\theta_3,\dots,
\theta_k$ for $D^\pm_{m+1}$ and $E_6$).
\def\arraystretch{1.5}
\begin{table}
\begin{equation*}
\begin{array}{||l|l|l|l||}\hline\hline
  \text{Type}\ & f(\theta) & f_1(\theta),\dots, f_n(\theta) & \mbox{} \\ \hline
                                                                      
A_{m+1} & \pm \theta_1^{m+2} + (\theta')^2 &
                                             \theta_1,\dots,\theta_1^m
  & n \geq m \geq 0,\ k \geq 1\\
 \hline D^\pm_{m+1} & \theta_1^2\theta_2 \pm \theta_2^m+ (\theta')^2 &
                                                                 \theta_1,\theta_2,\dots,\theta_2^{m-1}&
n \geq m \geq 3,\  k \geq 2\\
  \hline E_6 & \theta_1^3\pm \theta_2^4 +(\theta')^2 &
                                                                 \theta_1,\theta_2,\theta_2^2,
  \theta_1\theta_2, \theta_1 \theta_2^2& n \geq 5,\ k \geq 2\\
  \hline E_7 & \theta_1^3 + \theta_1\theta_2^3+(\theta')^2 & \theta_1,\theta_2,\theta_2^2,\theta_2^3,\theta_2^4,\theta_1\theta_2  &n\geq 6,\ k\geq 2\\
  \hline E_8 &\theta_1^3+\theta_2^5+(\theta')^2 &\theta_1,\theta_2,\theta_2^2,\theta_2^3,\theta_1\theta_2,\theta_1\theta_2^2,\theta_1\theta_2^3 & n\geq 7, k\geq 2\\
\hline\hline
\end{array}
\end{equation*}
\caption{Classification of \yellow{stable simple} singularities with parametrizations.}\label{table:classification}
\end{table}

The virtue, from the point of view of our analysis, of the
parametrizations in Table~\ref{table:classification} is that the functions $f$ are always
weighted homogeneous, as are the $x_j f_j$ if we consider a joint
homogeneity in $x,\theta.$  We will employ these facts below in our
analysis of the asymptotics.

Which of these singularities appear in ``real-life'' Hamiltonian
systems seems to be an intriguing open question.
We may easily find the fold singularity ($A_2$) arising in
integrable systems: a one-dimensional harmonic oscillator
$$
p=x^2+\xi^2
$$
has a fold singularity at each turning point of the Lagrangian torus
$p=E$ for every $E>0.$  In two dimensions, we may also find fold
singularities in the geodesic flow on convex surfaces of rotation: on
the surface
$$
\big\{ (x, f(x) \cos \theta, f(x) \sin \theta)\colon x \in [a,b],\
\theta \in S^1 \},
$$
the
Clairaut integral constrains the projection of a Lagrangian torus to
be a cylinder lying between two extremal values of the $x$ variable, where the
torus projection has a fold.

More complex singularities seem harder to come by in simple examples
of integrable systems; examples are known, at least numerically, for invariant tori in
nonintegrable settings, however.  For instance, the H\'enon--Heiles
Hamiltonian has been shown to have invariant tori with cusps ($A_3$)
\cite{StVi:95}; Section 5 of \cite{StVi:95} also refers to the
existence of swallowtails in analogous computations for $n=3.$ The
notion of stability employed in Arnol'd's classification is probably
\emph{not} the physically relevant one for KAM systems where we have a
Hamiltonian of the form $\smallabs{\xi}^2 +V(x):$ corners, for
instance, arise naturally and stably in these settings---see
\cite{De:87} and further discussion in \cite{Mo:91}.  Likewise, it is
natural in exploring extremizing sequences of eigenfunctions to
explore the \emph{blowdown singularity}, as this is the (unstable)
singularity to which is associated the extremizing sequence of
spherical harmonics on $S^n.$ We furthermore do not consider
degenerate Lagrangian tori, such as the equatorial orbits on surfaces
of rotation on
which Gaussian beams may concentrate.  We focus here on Arnol'd's
stable simple
singularities merely on the grounds that they are the first natural
case to consider.

\section{The H\"ormander--Melrose theory for $\delta$-Lagrangians}

In this section, we show that $\delta$-Lagrangian distributions can be
obtained as Fourier integrals with symbols in a suitable symbol
class. This is a semiclassical version of the H\"ormander--Melrose
theory (previously worked out in \cite{Alexandrova:Semiclassical} in
the case $\delta=0$), adapted to the case of $\delta$-Lagrangian regularity.

The results in this section are local in nature
and so it suffices to work in Euclidean space.  More precisely, the
results may also be microlocalized: if $B \in \Psi_h(X)$ has compact
microsupport then $Bu_h$ is $\delta$-Lagrangian whenever $u_h$ is (since
we can just replace $A_N$ by $A_N B$ in verifying the oscillatory
testing definition.  Thus, we may always restrict our analysis to
distributions $u_h$ microsupported in arbitrarily small sets.

We introduce \red{for $\delta\in[0,1]$, a} symbol class \blue{consisting of families of smooth
  functions whose higher derivatives satisfy sup norm estimates
  that worsen by powers of $h$:
\begin{multline}S^k_\delta(\RR^n\times\RR^N)=\{a(x,\theta;
  h)\colon\lvert \partial_{(x,\theta)}^\alpha a(x,\theta;h) \rvert \leq
  C_\alpha h^{-k-\delta|\alpha|}\\ \text{for all } \alpha \in \NN^{n+N},\ h \in (0, 1)\}.\end{multline}}

We will use the convention on the semiclassical Fourier transform from
\cite{Zw:12}, with
$$
\F_h u_h(\xi) \equiv \int e^{-ix\xi/h} u_h(x) \, dx.
$$
As it occurs frequently in what follows, we employ the shorthand $\plz$
for ``$+\ep$ for all $\ep>0$.'' We will revert to writing the
definition out in full where important quantities may depend on the
choice of $\ep,$ however.

We will require, in what follows, a sharp version of Sobolev embedding
associated to distributions that are $\delta$-Lagrangian with respect
to the zero section $o \subset T^*\RR^n.$ (Note that such distributions are in
fact exactly the symbols we will be dealing with, since the zero
section is parametrized by the phase function $\phi=0,$ and the
distribution is its own amplitude.
\begin{lemma}\label{lemma:semisobolev}
Let $a(x; h)$ be a
$\delta$-Lagrangian distribution with respect to the zero section.  Then $a \in S_\delta^{\frac{n\delta}2},$ with estimates
depending on only finitely many $\delta$-Lagrangian seminorms.
  \end{lemma}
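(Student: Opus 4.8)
The plan is to pass to the semiclassical Fourier transform: for the zero section the iterated regularity hypothesis is nothing more than a family of weighted $L^2$ bounds on $\F_h a$ at frequency scale $h^{1-\delta}$, and the desired sup-norm and derivative estimates then follow from a weighted Cauchy--Schwarz argument. The point of using Cauchy--Schwarz rather than the naive embedding $\norm{a}_{L^\infty}\lesssim\sum_{\smallabs{\beta}\le s}\norm{\partial^\beta a}_{L^2},\ s>n/2,$ is precisely that the latter would cost an $h^{-\ep}$, whereas here we want the sharp order $\tfrac{n\delta}{2}$.

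Carrying this out, I would first reduce the test operators. Working locally, I may take $a$ compactly supported in $x$ (the $\delta$-Lagrangian property is stable under multiplication by such a cutoff, since each commutator of a test operator with the cutoff carries an extra factor of $h$ and so only contributes lower-order terms), so that $\F_h a$ is a smooth function. Since the zero section $o=\{\xi=0\}$ has defining functions $\xi_1,\dots,\xi_n,$ a standard elliptic regularity argument (as in the parenthetical remark in Section~\ref{sec:tori}, after microlocalizing) shows that the hypothesis is equivalent to the bounds $\norm{(hD)^\beta a}_{L^2}\le C_N h^{N(1-\delta)}$ for all $N$ and all $\beta\in\NN^n$ with $\smallabs{\beta}=N,$ with each $C_N$ controlled by finitely many $\delta$-Lagrangian seminorms of $a.$ Applying Plancherel's theorem and summing over the monomials of degree $N$ converts these into
$$
\int_{\RR^n} \big\langle \xi/h^{1-\delta}\big\rangle^{2N}\, \smallabs{\F_h a(\xi)}^2\, d\xi \le D_N h^n \quad\text{for all } N,
$$
where again $D_N$ depends on finitely many $\delta$-Lagrangian seminorms. (Note this already forces $\F_h a$ to concentrate at frequencies $\smallabs{\xi}\lesssim h^{1-\delta}$ up to controlled tails.) Then I would estimate $\partial_x^\alpha a$ pointwise by Fourier inversion, $\smallabs{\partial_x^\alpha a(x;h)}\le (2\pi h)^{-n}h^{-\smallabs{\alpha}}\int \smallabs{\xi}^{\smallabs{\alpha}}\,\smallabs{\F_h a(\xi)}\,d\xi,$ insert the weight $\langle\xi/h^{1-\delta}\rangle^{\mp N},$ apply Cauchy--Schwarz, and choose $N=N(\alpha)$ large enough (namely $2N>2\smallabs{\alpha}+n$) that the elementary integral $\int \smallabs{\eta}^{2\smallabs{\alpha}}\langle\eta\rangle^{-2N}\,d\eta$ converges. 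After rescaling $\xi=h^{1-\delta}\eta$ in the weight integral, the two Cauchy--Schwarz factors contribute $h^{(1-\delta)(\smallabs{\alpha}+n/2)}$ and $(D_{N(\alpha)}h^n)^{1/2};$ multiplying by the prefactor $(2\pi h)^{-n}h^{-\smallabs{\alpha}}$ and collecting powers of $h$ gives, after a one-line arithmetic check, exactly $h^{-n\delta/2-\delta\smallabs{\alpha}}.$ Since only $D_{N(\alpha)}$ and a fixed universal integral enter, the resulting constant $C_\alpha$ depends on finitely many $\delta$-Lagrangian seminorms, which is the claim.

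The only step that is not purely formal is the first reduction: one must be sure that testing against the operators cut out by defining functions of $o$ (and localizing $a$ in $x$ and in frequency) genuinely captures all the frequency content of $a$ in the relevant compact region, so that the estimate $\norm{(hD)^\beta a}_{L^2}\le C_N h^{N(1-\delta)}$ really is available for every $\beta;$ this is where the microlocalization discussion at the start of this section is used, together with the observation above that the hypothesis itself confines $\F_h a$ near the zero section. Everything after that is bookkeeping, whose sole subtlety is the $\alpha$-dependent choice of $N$ forced by convergence of $\int\smallabs{\eta}^{2\smallabs{\alpha}}\langle\eta\rangle^{-2N}\,d\eta$ — and it is exactly this that recovers the sharp exponent $\tfrac{n\delta}{2}$ in place of the $\tfrac{n\delta}{2}\plz$ one would get from Sobolev embedding.
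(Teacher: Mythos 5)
Your proof is correct and is essentially the paper's argument: both convert the iterated regularity into uniform weighted $L^2$ bounds on the Fourier side at frequency scale $h^{1-\delta}$ and then apply Cauchy--Schwarz in the Fourier inversion formula to get the sharp $h^{-n\delta/2-\delta\smallabs{\alpha}}$ bounds. The only cosmetic difference is that the paper works with a unitary Fourier transform at scale $h^\delta$ so a single fixed weight $\ang{\xi}^{n/2+1}$ serves for all derivatives, whereas you use $\F_h$ with an $\alpha$-dependent weight power and an explicit rescaling, which amounts to the same computation.
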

\blue{Note that Lemma~\ref{lemma:semisobolev} is sharp, as shown by
  the example
 \begin{equation}\label{gaussian}a(x; h)=h^{-\delta/2} e^{-x^2/h^{2\delta}}\end{equation} in
    one dimension.}

  \begin{proof}
    For any \blue{semiclassical family of functions} $u_h,$ let
    $$
    T^\delta_hu_h(\xi)=(2\pi h)^{-n\delta/2} \int u_h(x) e^{-i\xi x/h^{\delta}} \,
    dx
    $$
    denote the semiclassical Fourier transform on scale $h^\delta;$
note that we have scaled $T^\delta_h$ to be unitary, with
    $$
    (T^\delta_h)^{-1}v_h(x)=(2\pi h)^{-n\delta/2} \int v_h(\xi) e^{i\xi x/h^{\delta}} \,
    d\xi.
    $$
    Thus by integration by parts, for all $\alpha$ and $\beta,$
    $$
\xi^\alpha T^\delta_h (h^\delta D_x)^\beta a=T^\delta_h (h^{\delta}
D_x)^{\alpha+\beta} a \in L^2,
$$
uniformly as $h\downarrow 0.$  In particular, then,
$$
\ang{\xi}^{n/2+1} T^\delta_h (h^\delta D_x)^\beta  a \in L^2,
$$
hence by Cauchy--Schwarz applied to the inverse transform
$$\begin{aligned}
\sup\smallabs{(h^\delta D_x)^\beta a} &\leq (2 \pi h)^{-n\delta/2}
\smallnorm{\ang{\xi}^{-n/2-1}}_{L^2} \smallnorm{\ang{\xi}^{n/2+1} T^\delta_h
  (h^\delta D_x)^\beta a}_{L^2} \\ &\leq C_\beta h^{-n\delta/2}
\end{aligned}$$
for all $\beta.$

\end{proof}

Fix a Lagrangian submanifold $\lag\subset T^*\RR^n \simeq \RR^{2n}$ and let $\phi$
be a phase function that locally parametrizes $\lag$ with $N$ phase
variables as described in \S\ref{section:stable}. In particular we
assume that  
\[\lag\cap U=\{(x,\phi_x(x,\theta))\subset \RR^{2n}:\teal{(x,\theta)\in V\textrm{ and }}\phi_\theta(x,\theta)=0 \}\]
where $U\subset{\RR^{2n}}$ \teal{ and $V\subset \RR^{n}\times \RR^N$ are} open and bounded.
 
\blue{Given a symbol $a$ and phase function $\phi,$ we will employ the standard oscillatory integral notation
$$
I(a,\phi)[x]\equiv\int_{\RR^N} a(x,\theta) e^{i\phi(x,\theta)/h} \, d\theta.
$$}%

\begin{proposition}
\label{HM1}
Let $\delta \in [0, 1/2).$
\begin{enumerate}
\item \blue{Let $u_h$ be a $\delta$-Lagrangian distribution with respect to
$\lag,$ with $\|u_h\|_{L^2}=1$ and $\WF_h(u_h)\subset U$.}
For every point $\gamma=(x_0,\xi_0)\in U\cap\lag$, we can find a symbol $a(x,\theta)$ in the class $S^{\frac{N}{2}+\frac{n\delta}2}_\delta(\RR^{n+N})$ such that
$$
u_h=I(a,\phi).
$$
microlocally near $\gamma$.

\item \teal{Conversely, let $a(x,\theta)$ be a symbol in the class $S_{\delta}^{\frac{N}{2}}$ supported in $V$. Then 
	\[u_h=I(a,\phi)\]
	is a $\delta$-Lagrangian distribution $u_h$ with $\WF_h(u_h)\subset U$ and $\|u_h\|_{L^2}$ is bounded.
}
\end{enumerate}
\end{proposition}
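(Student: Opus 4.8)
The plan is to establish the two assertions of the proposition in turn, in both cases reducing to a \emph{generating-function} phase and to the scalar estimate of Lemma~\ref{lemma:semisobolev}. For part~(1), the idea is to reduce to a projectable Lagrangian by a partial Fourier transform and then read off the amplitude by stripping off a phase factor. After microlocalizing near $\gamma$ (harmless, as observed before the proposition), we would choose a partition $\{1,\dots,n\}=I\sqcup J$, with $|J|=:N_0$, for which $\lag$ is near $\gamma$ a graph over $(x_I,\xi_J)$, and apply the semiclassical Fourier transform $\F_h$ in the variables $x_J$. Since conjugation by $\F_h$ replaces the symbol of a pseudodifferential operator by its composition with the corresponding linear symplectomorphism, $\F_{h,x_J}u_h$ is $\delta$-Lagrangian with respect to a Lagrangian $\widehat\lag=\{\widehat\xi=d\widehat\psi(\widehat x)\}$ that is now a graph, and $\|\F_{h,x_J}u_h\|_{L^2}=(2\pi h)^{N_0/2}$. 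Writing $\F_{h,x_J}u_h=\widehat a\,e^{i\widehat\psi/h}$, conjugation by $e^{i\widehat\psi/h}$ carries $hD_j$ to $hD_j-\partial_j\widehat\psi$, whose symbol is a defining function for $\widehat\lag$; thus the iterated regularity of $\F_{h,x_J}u_h$ gives $\|(hD)^\beta\widehat a\|_{L^2}\le C_\beta h^{N_0/2}h^{|\beta|(1-\delta)}$, and feeding these bounds into the argument of Lemma~\ref{lemma:semisobolev} places $\widehat a$ in $h^{N_0/2}S^{n\delta/2}_\delta$. Transforming back and keeping track of the factors $(2\pi h)^{\pm N_0}$ yields $u_h=I(a_0,\phi_0)$ with $a_0\in S^{N_0/2+n\delta/2}_\delta$ and $\phi_0(x,\theta)=x_J\cdot\theta+\widehat\psi(x_I,\theta)$ a generating-function phase. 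It remains to match $\phi_0$ with the prescribed $\phi$: both are nondegenerate phase functions for $\lag$ at $\gamma$, so by the classical equivalence theorem for phase functions (\cite{Duistermaat:Oscillatory}) $\phi$ is, after a fibre-preserving diffeomorphism, of the form $\phi_0\oplus Q$ with $Q$ a nondegenerate quadratic form in $N-N_0$ extra variables. A fibre-preserving diffeomorphism alters $a_0$ only by composition with an $h$-independent map and an $h$-independent Jacobian, hence preserves the symbol class, while $I(a_0,\phi_0)$ is recovered from $I(a_0\otimes\rho,\phi_0\oplus Q)$ modulo $O(h^\infty)$, where $\rho$ is a fixed cutoff profile rescaled by $h^{-(N-N_0)/2}$ so that $\int\rho\,e^{iQ/h}=1+O(h^\infty)$; this rescaling moves the order of the amplitude to exactly $N/2+n\delta/2$, as claimed.

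For part~(2), we would run this reduction in reverse. After a fibre-preserving diffeomorphism putting $\phi$ in the form $\phi_0\oplus Q$ with $\phi_0$ a minimal generating-function phase (with $N_0$ phase variables), applying stationary phase in the $N-N_0$ variables carrying $Q$ expresses $I(a,\phi)$, modulo $O(h^\infty)$, as $I(\mathbf a,\phi_0)$ with $\mathbf a\in S^{N_0/2}_\delta$: the correction terms of the stationary-phase expansion carry $h^k$ against $2k$ amplitude derivatives, so are of strictly lower order precisely because $\delta<1/2$, and the resulting hierarchy of symbols of decreasing order is Borel-summed. Since $\phi_0$ is a generating-function phase, $I(\mathbf a,\phi_0)$ is $(2\pi h)^{N_0}$ times a partial inverse semiclassical Fourier transform of $\mathbf a(x_I,\theta)e^{i\widehat\psi(x_I,\theta)/h}$; Plancherel together with the bound $|\mathbf a|\le Ch^{-N_0/2}$ on its compact support then gives $\|u_h\|_{L^2}\le C$, and more generally $\|I(b,\phi)\|_{L^2}\le Ch^{s}$ for any compactly supported $b\in S^{N/2-s}_\delta$. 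That $\WF_h(u_h)\subset U$ is standard non-stationary phase: off the critical set $\{d_\theta\phi=0\}$ the phase of the integral pairing $u_h$ with a semiclassical wavepacket is non-stationary in $(x,\theta)$, and each integration by parts gains a full power of $h$ while differentiating $a\in S^{N/2}_\delta$ costs only $h^{-\delta}$, so the pairing is $O(h^\infty)$; on the critical set $(x,d_x\phi)\in\lag\cap U$ since $\phi$ parametrizes $\lag$ there.

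The remaining point, which we expect to be the main obstacle, is the iterated regularity of $u_h=I(a,\phi)$. The engine is a composition formula: for $A\in\Psi^{-\infty}_h$ one has $A\,I(a,\phi)=I(\widetilde a,\phi)+O_{L^2}(h^\infty)$, with $\widetilde a\in S^k_\delta$ whenever $a\in S^k_\delta$, and with the improvement $\widetilde a\in h^{1-\delta}S^k_\delta=S^{k-(1-\delta)}_\delta$ when $\sigma_h(A)$ vanishes on $\lag$. To obtain the improvement, expand $A\,I(a,\phi)$ by the usual stationary-phase description of a pseudodifferential operator acting on an oscillatory integral. The leading term $\sigma_h(A)\bigl(x,\phi_x(x,\theta)\bigr)\,a(x,\theta)$ need not be smaller than $a$, but it vanishes on $\{d_\theta\phi=0\}$, so Hadamard's lemma — the $h$-independent factor $\sigma_h(A)(x,\phi_x(x,\theta))$ being divisible by the $\partial_{\theta_l}\phi$, which have independent differentials along the critical set by nondegeneracy of $\phi$ — writes it as $\sum_l c_l(x,\theta)\,\partial_{\theta_l}\phi(x,\theta)$ with $c_l\in S^k_\delta$; integrating by parts in $\theta$ then turns it into $-(h/i)\sum_l\partial_{\theta_l}c_l\in h^{1-\delta}S^k_\delta$, a net gain of $h^{1-\delta}$. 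In every remaining term a factor $h^j$ with $j\ge1$ is accompanied by some number $q$ of derivatives falling on $a$, while the vanishing to second order of the Taylor remainder of $\phi$ along the diagonal forces each reciprocal power of $h$ to absorb at least two derivatives, so the term has size $h^{(j+q)/2-q\delta}$; since $j\ge1$ and $\delta<1/2$ this exponent is at least $1-\delta$. Iterating the composition formula $M$ times gives $A_1\cdots A_M u_h=I(a_M,\phi)+O_{L^2}(h^\infty)$ with $a_M\in S^{N/2-M(1-\delta)}_\delta$, and feeding this into the $L^2$ estimate above yields $\|A_1\cdots A_M u_h\|_{L^2}\le C_M h^{M(1-\delta)}$ — precisely the $\delta$-Lagrangian condition. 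The real work lies in the derivative bookkeeping in this composition formula, where the hypothesis $\delta<1/2$ enters exactly to make every correction a genuine gain, and in checking that the partial Fourier transform, the fibre-preserving diffeomorphisms, and the equivalence of phase functions all respect the classes $S^k_\delta$ with the expected shifts of order — a $\delta$-analogue of the $\delta=0$ theory of \cite{Alexandrova:Semiclassical}.
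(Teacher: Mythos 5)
Your route is genuinely different from the paper's. For part~(1) you reduce to a projectable Lagrangian by a \emph{partial} Fourier transform in the $x_J$ variables and read off the amplitude as in Lemma~\ref{lemma:semisobolev} (close in spirit to Propositions~\ref{HM2} and~\ref{HM.explicitphase}), and you then pass from the resulting generating-function phase to the prescribed $\phi$ via the H\"ormander--Duistermaat equivalence of phase functions together with the exact normalization $\int\rho\,e^{iQ/h}=1+O(h^\infty)$; the paper instead conjugates by $e^{iDx''\cdot x''/2h}$ to make $\lag$ transverse to the constant section, takes the \emph{full} Fourier transform, and builds the amplitude for a general $\phi$ by an iterative, Borel-summed stationary-phase construction --- which is precisely where its hypothesis $\delta<1/2$ enters. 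Your part~(1) is essentially sound (you should take $|J|$ minimal so that the generating-function phase has vanishing $\theta$-Hessian at the critical point before invoking equivalence, and absorb the additive constant in the phase into the amplitude), and it has the pleasant by-product of not needing $\delta<1/2$ at that stage. For part~(2) you verify iterated regularity through a composition formula $A\,I(a,\phi)=I(\tilde a,\phi)+O_{L^2}(h^\infty)$ with a Hadamard-lemma gain for characteristic $A$, whereas the paper avoids any such formula: it reduces to the generators $x_j-\partial_{\xi_j}H(hD)$, for which the conjugation is an \emph{exact} integration by parts (see \eqref{checking}), and it determines the order by a single joint stationary phase in $(x,\theta)$.

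There are two genuine gaps on the part~(2) side. First, the $L^2$ bound: you write $I(\mathbf a,\phi_0)$ as a partial inverse semiclassical Fourier transform of $\mathbf a(x_I,\theta)e^{i\widehat\psi(x_I,\theta)/h}$ and invoke Plancherel, but the reduced amplitude $\mathbf a$ in general still depends on $x_J$ --- neither the fibre-preserving diffeomorphism nor the stationary phase in the $Q$-variables removes the $x_J$-dependence of the original $a$ --- so the identity you use is not available and the one-line Plancherel argument does not apply. Since the graded estimate $\|I(b,\phi)\|_{L^2}\lesssim h^{s}$ for $b\in S^{N/2-s}_\delta$ is also what converts your symbolic gain into the final $O(h^{M(1-\delta)})$ bound, this step carries real weight; it can be repaired either by first eliminating the $x_J$-dependence (Taylor expansion about the critical set, integration by parts, and Borel summation, again using $\delta<1/2$) or, as the paper does, by bounding $\mathcal{F}_h I(a,\phi)$ pointwise via stationary phase in all variables and then applying Plancherel. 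Second, your bookkeeping for the corrections in the composition formula is not right as stated: the size ``$h^{(j+q)/2-q\delta}$'' is not the correct count, and the true reason each correction gains at least $h^{1-\delta}$ (rather than only $h^{1-2\delta}$) is structural --- the $(y,y)$ block of the inverse Hessian of the phase $(x-y)\cdot\xi+\phi(y,\theta)$ vanishes, so each power of $h$ in the expansion places at most one derivative on the $\delta$-amplitude --- while $\delta<1/2$ is actually needed where you do not mention it, namely in the remainder estimates that justify truncating and Borel-summing the expansion. Both points are fixable, but as written part~(2) is incomplete.
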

\blue{We remark that the discrepancy in symbol orders in the two parts of this
proposition is necessary even in the model case where $\lag$ is the zero-section,
as shown by the example \eqref{gaussian}.}
\begin{proof}
	We closely follow the proof of Theorem~4.4 of \cite{Alexandrova:Semiclassical} and begin by assuming that $\lag$ is transverse to the constant section $\xi=\xi_0$ at $\gamma$. In particular, this implies that we can write 
	\begin{equation}\label{param}\lag\cap U=\{(\partial_\xi H(\xi),\xi)\in \RR^{2n}:x\in W\}\end{equation}
	for some open bounded $W\subset \RR^N$ and some smooth function $H\in\mathcal{C}_b^\infty(W;\RR)$
	which we extend to $\RR^n$.
	The symbols
	\[b_j\equiv x_j-\partial_{\xi_j}H(\xi)\]
	\teal{generate the module of $A\in\Psi_h^{-\infty}$ characteristic to $\lag\cap U$.} 
	\teal{Hence $u_h$ with $\WF_h(u_h)\subset U$ has $\delta$-Lagrangian regularity with respect to $\lag$ if and only if} we have
	\[\|(x-\partial_{\xi}H(hD))^\alpha u_h\|_{L^2} =O(h^{(1-\delta)|\alpha|})\]
	\teal{for all $\alpha$}.
	Taking the semiclassical Fourier transform in $x$ and applying Plancherel, we obtain
	\begin{equation}\label{vreg}\|(-hD-\partial_{\xi}H(\xi))^\alpha \F_h{u_h}\|_{L^2} =O(h^{n/2+(1-\delta)|\alpha|)}).\end{equation}
	Setting 
	\begin{equation}\label{vdef}v_h(\xi)=\red{e^{iH(\xi)/h}}\F_h{u_h}(\xi)\end{equation}
	we obtain
	\[\|\partial^\alpha v_h\|_{L^2}=h^{-|\alpha|}\|(-hD-\partial_\xi H)^\alpha \F_h{u_h}\|_{L^2}=O(h^{n/2-\delta|\alpha|}).\]
	\teal{Hence we have established that for $u_h$ with $\WF_h(u_h)\subset U$ and $\lag$ transverse to the constant section locally parametrized as \eqref{param} that
	\begin{equation}
	\label{dlagcriterion}
	u_h\in L^2 \textrm{ is $\delta$-Lagrangian}\Longleftrightarrow \|\partial_\xi^\alpha (e^{iH/h}\mathcal{F}_h u_h)\|_{L^2}=O(h^{n/2-\delta|\alpha|}) \textrm{ for all }\alpha.
	\end{equation}}
	\teal{Under the assumption that $u_h$ is $\delta$-Lagrangian}, Sobolev embedding yields
	\begin{equation}
	\label{vest}\|\partial^\alpha v_h\|_{L^\infty} =O(h^{n/2-\delta(|\alpha|+n/2)})
      \end{equation}
	and so we have $v_h\in S_\delta^{n(\delta-1)/2}$, and by
        \eqref{vdef}, this shows 
        that we may write $u_h$ as an oscillatory integral
        parametrized by the special phase function $H(\xi)-x\cdot
        \xi$.  \blue{Note that the order of the amplitude, which comes
          out to $n/2+n\delta/2,$ includes a contribution
from the factor of $h^{-n}$ in the inverse Fourier transform.}

	In order to establish the proposition for an arbitrary phase $\phi$ \red{parametrizing a Lagrangian transverse to the constant section satisfying \eqref{param}}, we consider the more general oscillatory integral 
	\[\mathcal{F}_h(I(a,\phi))(\xi)=\int_{\RR^n}\int_{\RR^N} a(x,\theta) e^{i(\phi(x,\theta)-x\cdot \xi)/h} \, d\theta \, dx\]
	for an arbitrary symbol $a\in S^r_\delta(\RR^n\times \RR^N)$.
	
	As in \cite{Alexandrova:Semiclassical}, from the implicit
        function theorem and the nondegeneracy of the phase function
        $\phi$, shrinking $U$ and $W$ if necessary, we can find smooth
        functions
        $\bar{x}\teal{\in\mathcal{C}_b^\infty(W;\RR^n)},\bar{\theta}\in\mathcal{C}_b^\infty(W;\RR^N)$ such
        that for fixed $\xi\in W$, the phase  $$\Phi(x,\theta;\xi)=\phi(x,\theta)- x\cdot \xi$$ 
	is stationary precisely in $(x,\theta)$ at
        $(\bar{x}(\xi),\bar{\theta}(\xi);\xi)$, and this stationary
        point is nondegenerate.
      Furthermore, if $a$ is compactly
        supported close to $(\bar{x}(\xi_0),\bar{\theta}(\xi_0))$,
        then $\mathcal{F}_h(I(a,\phi))$ is $O(h^\infty)$ for
        $\xi\notin W$ by nonstationary phase, and
        $\mathrm{sgn}(\partial^2\Phi)$ can be assumed to be constant
        on the support of $a$.

	For $\xi\in W$ we have the stationary phase expansion
	\[\mathcal{F}_h(I(a,\phi))(\xi)\teal{=} e^{i\Phi(\bar{x}(\xi),\bar{\theta}(\xi);\xi)/h}\sum_{k=0}^{K-1} h^{n/2+N/2+k}(P_{2k}(D)a)(\bar{x}(\xi),\bar{\theta}(\xi))\teal{+R_K(\xi)}\]
	where $P_{2k}$ is a differential operator of order $2k$,
	\[P_0= (2\pi)^{(n+N)/2}|\det(\partial^2\Phi)|^{-1/2}\cdot e^{i\pi\mathrm{sgn}(\partial^2\Phi)/4}\]
	\teal{and
	\[\sup|R_K|\leq C_Kh^{n/2+N/2+K}\!\!\!\!\!\!\!\!\sum_{|\alpha|\leq 2K+n+N+1}\!\!\!\!\!\!\!\!\sup |\partial^\alpha a|=O(h^{-r-\delta+(n/2+N/2+K)(1-2\delta)}).\]
} 
Since $(\pa_\xi H(\xi),\xi)=(\bar{x}(\xi), \xi) \in \lag,$
	we obtain
	\[\partial_\xi \Phi(\bar{x}(\xi),\bar{\theta}(\xi);\xi)=-\pa_\xi H(\xi)\]
	and so by adding a suitable constant to $H$ we may assume that \[\Phi(\bar{x}(\xi),\bar{\theta}(\xi);\xi)=-H(\xi)\] 
	for $\xi\in W$.
	
	\teal{
	Recalling that $\delta < 1/2$, we can choose $K$ sufficiently large so that \[\sup|R_K|=O(h^{-r+n/2+N/2+M(1-2\delta)})\] for arbitrary $M\in\mathbb{N}$, giving 
	\begin{equation}\label{linfest}
	e^{iH/h}\mathcal{F}_h(I(a,\phi))=\sum_{k=0}^{M-1} h^{n/2+N/2+k}(P_{2k}a)(\bar{x}(\xi),\bar{\theta}(\xi))+O_{L^\infty}(h^{-r+n/2+N/2+M(1-2\delta)}).
	\end{equation}
	To estimate the derivatives of $e^{iH/h}\mathcal{F}_h(I(a,\phi))$ we compute
	\[hD_{\xi_k}(e^{iH/h}\mathcal{F}_h(I(a,\phi)))=e^{iH/h}\int_{\RR^n}\int_{\RR^N}(\partial_{\xi_k}H(\xi)-x_k)a(x,\theta)e^{i(\phi(x,\theta)-x\cdot \xi)/h}\, d\theta \,dx.\]
	From the nondegeneracy of the stationary points $(\bar{x}(\xi),\bar{\theta}(\xi);\xi)$, the map $(x,\theta,\xi)\mapsto (\partial_x\Phi,\partial_\theta\Phi,\xi)$ is a local diffeomorphism in a neighbourhood of ${\{(\bar{x}(\xi),\bar{\theta}(\xi),\xi):\xi\in W\}}$.
	As the factor $\partial_{\xi_k}H(\xi)-x_k$ vanishes at
        $(\partial_x\Phi,\partial_\theta\Phi,\xi)=(0,0,\xi)$, Taylor
        expansion gives
	\begin{equation}\label{checking}(\partial_{\xi_k}H(\xi)-x_k)e^{i\Phi/h}=h\left(\sum_{i=1}^n
            b^{(k)}_i(x,\theta,\xi) D_{x_i} +\sum_{j=1}^N
            c^{(k)}_j(x,\theta,\xi) D_{\theta_j}\right)e^{i\Phi/h}\end{equation}
	for $b^{(k)}_i,c^{(k)}_j\in\mathcal{C}^\infty_b(\RR^n\times\RR^N\times \RR^n).$
	Integration by parts in the operator $$L_k=\sum_i b^{(k)}_i D_{x_i}+\sum_j c^{(k)}_j D_{\theta_j}$$ thus shows that $$D_{\xi_k}(e^{iH/h}\mathcal{F}_h(I(a,\phi)))=e^{iH/h}\mathcal{F}_h(I(L_k^Ta,\phi))$$ with $L_k^T$ a first order differential operator and so  $L_k^Ta\in S_\delta^{r+\delta}$.
	By iterating this integration by parts we obtain 
	$$D_{\xi}^\alpha(e^{iH/h}\mathcal{F}_h(I(a,\phi)))=e^{iH/h}\mathcal{F}_h(I(L^\alpha a,\phi))$$
	where $L^\alpha$ is a differential operator of order $\alpha$, only involving differentiation in $(x,\theta)$, and with coefficients smooth in $(x,\theta,\xi)$. By utilising \eqref{linfest}, we obtain 
		\begin{equation}
		\label{linfest2}
		\|\partial_\xi^\alpha (e^{iH/h}\mathcal{F}_h I(a,\phi))\|_{L^\infty}=O(h^{-r+n/2+N/2-\delta|\alpha|}).
		\end{equation}
	Equation \eqref{linfest2} implies that  $e^{iH/h}\mathcal{F}_h(I(a,\phi))\in S^{r-n/2-N/2}_\delta$, and so from \eqref{linfest} and a semiclassical analogue of \cite[Proposition~1.1.10]{Hormander1} we deduce the expansion
	\begin{equation}
	\label{asymp}
	e^{iH/h}\mathcal{F}_h(I(a,\phi))\sim \sum_{k=0}^\infty h^{n/2+N/2+k}(P_{2k}a)(\bar{x}(\xi),\bar{\theta}(\xi))
	\end{equation}
	in the sense that
	\begin{equation}
	e^{iH/h}\mathcal{F}_h(I(a,\phi))-\sum_{k=0}^{M-1} h^{n/2+N/2+k}(P_{2k}a)(\bar{x}(\xi),\bar{\theta}(\xi))\in S_\delta^{r-n/2-N/2-M(1-2\delta)}.
	\end{equation}
	As $\mathcal{F}_h I(a,\phi)$ is $O(h^\infty)$ outside the bounded set $W$, we can combine  \eqref{asymp} and \eqref{dlagcriterion} to show that $I(a,\phi)$ has $\delta$-Lagrangian regularity and is bounded in $L^2$, proving part (2) of the proposition in the case where $\lag$ is transverse to the constant section. 
}

	We now \red{complete the proof of part (1), under the same transversality assumption}. The idea is to use the expansion \eqref{asymp} to construct a symbol $a(x\teal{,\theta})$ such that $v_h=e^{iH/h}\mathcal{F}_h(I(a,\phi))+O_{S_\delta}(h^\infty)$, where $v_h$ is as in \eqref{vdef}.
	\teal{We write $\psi(x,\theta)=\partial_x\phi(x,\theta)$. This function is smooth in a neighbourhood $V$ of $(\bar{x}(\xi_0),\bar{\theta}(\xi_0))$ and satisfies  $\psi(\bar{x}(\xi),\bar{\theta}(\xi))=\xi\in \RR^n$ as $\Phi$ is stationary in $(x,\theta)$ at $(\bar{x}(\xi),\bar{\theta}(\xi))$}.

	We begin by taking
	\[a_0=(2\pi h)^{-(n+N)/2}\left(|\det(\partial^2\Phi)|^{1/2}\cdot e^{-i\pi\mathrm{sgn}(\partial^2\Phi)/4}v_{\teal{h}}\right)\circ \psi\ \]
	for $(x,\theta)$ near $(\bar{x}(\xi_0),\bar{\theta}(\xi_0))$ and cutting off smoothly away from $V^c$, we have $a_0\in S_\delta^{(n\delta+N)/2}$ and  \teal{by truncating the expansion\eqref{asymp} after the leading term, we obtain}
	\[e^{iH/h}\mathcal{F}_h(I(a_0,\phi))-v_h\teal{\in S_\delta^{n(\delta-1)/2-(1-2\delta)}}.\]
	Proceeding iteratively, we can construct a sequence of symbols 
	$${a_k\in S_\delta^{\teal{(n\delta+N)/2-(1-2\delta)k}}}$$ supported in $V$ such that 
	\[e^{iH/h}\mathcal{F}_h\bigg(I\big(\sum_{k=0}^{\teal{l-1}} a_k h^{k(1-2\delta)},\phi\big)\bigg)-v_h\teal{\in S_\delta^{n(\delta-1)-l(1-2\delta)}}.\]
	Borel summation then yields a total symbol $a\in S_\delta^{(n\delta+N)/2}$ with 
	\[e^{iH/h}\mathcal{F}_h(I(a,\phi))-v\teal{\in S_\delta^{-\infty}}\]
	which allows us to conclude that 
	\[u_h=I(a,\phi)\]
	microlocally near $(x_0,\xi_0)$, with $a$ in the required symbol class.

	\teal{It remains to establish parts (1) and (2) of the Proposition in}  the case where $\lag$ is not transverse to the
        constant section $\xi=\xi_0$ at $\gamma=(x_0,\xi_0)$. We
        proceed as in \cite{Alexandrova:Semiclassical} and apply a
        symplectic transformation to reduce to the transverse case as follows.
	
	We can choose our coordinates $x=(x',x'')$ and $\xi=(\xi',\xi'')$ in $\RR^{k}\times \RR^{n-k}$ so that the tangent space $T_\gamma\lag$ takes the form
	\[T_\gamma\lag=\{(0,x'';\xi',Bx''):x''\in \RR^{n-k},\xi'\in\RR^k\}\]
	where $B$ is a symmetric matrix. Here we have identified $T_\gamma\lag$ with a $n$-dimensional subspace of $\RR^n\times\RR^n$ in the natural way. If $B$ were invertible, then this tangent space would be transverse to the constant section, so we choose a diagonal $(n-k)\times (n-k)$ matrix $D$ such that $B+D$ is nonsingular.
	
	Then the transformed Lagrangian
	\[\tilde{\lag}=\{(0,x'';\xi',(B+D)x''):x''\in \RR^{n-k},\xi'\in\RR^k\}\]
	is transverse to the constant section through
        \blue{$\tilde{\gamma}\equiv (x_0,\xi_0+Dx_0'')$}
	and is parametrized by the phase function
	\[\tilde{\phi}(x,\theta)=\phi(x,\theta)+\frac{1}{2}Dx''\cdot x''.\]
	
	Taking \blue{$A_j\in \Psi_h$} characteristic to $\lag$ and
        \blue{compactly} microlocalized near $\gamma$, partial Lagrangian regularity implies
	\[\left(\prod_{j=1}^m e^{iDx''\cdot x''/2h}A_je^{-iDx''\cdot x''/2h}\right)e^{iDx''\cdot x''/2h}u_h=O_{L^2}(h^{(1-\delta)m}) \]
	\teal{for $L^2$-normalised $u_h$ with partial Lagrangian regularity with respect to $\lag$.}
	The operators
	\[B_j=e^{iDx''\cdot x''/2h}A_je^{-iDx''\cdot x''/2h}\]
	are shown in \cite{Alexandrova:Semiclassical} to be semiclassical pseudodifferential operators that are compactly microlocalized near $\tilde{\gamma}$ with principal symbols \begin{equation}\label{symbolrel}
	\sigma(B_j)(x,\xi)=\sigma(A_j)(x,\xi-Dx'')
\end{equation}
	which are characteristic to $\tilde{\lag}$.
	
	From \teal{part (1)} of the proposition \teal{in the case} where $\lag$ is
        transverse to the constant section, it follows that we can
        find a symbol $a\in S_\delta^{N/2+n\delta/2}$ with  
	\[ e^{iDx''\cdot x''/2h}u_h= I(a,\tilde{\phi})\]
	microlocally near $\tilde{\gamma}$ and so we can conclude that
	\[u_h= I(a,\phi)\]
	microlocally near $\gamma$. This completes the proof of part (1) of the proposition.
	
	\teal{Similarly, if $u_h$ is given by $I(a,\phi)$ for $a\in S_\delta^{\frac{N}{2}}$, then $e^{iDx''\cdot x''/2h}u_h=I(a,\tilde{\phi})$. From part (2) of the proposition in the case where $\lag$ is transverse to the constant section, it follows that $e^{iDx''\cdot x''/2h}u_h$ is an $L^2$-bounded $\delta$-Lagrangian distribution with respect to \red{$\tilde\lag$}. As such, we have $$\left(\prod_{j=1}^m B_j\right)e^{iDx''\cdot x''/2h} u_h=O_{L^2}(h^{(1-\delta)m})$$ for any collection of $B_j\in \Psi_h$ characteristic to $\tilde \lag$ and compactly microlocalized near $\tilde \gamma$. In particular, by \eqref{symbolrel} this is true for $B_j=e^{iDx''\cdot x''/2h}A_j e^{-iDx''\cdot x''/2h}$ where $A_j\in\Psi_h$ is characteristic to $\lag$ and compactly microlocalized near $\gamma$, and we obtain
	$$\left(\prod_{j=1}^m A_j\right) u_h=O_{L^2}(h^{(1-\delta)m})$$
	for arbitrary such $A_j$, which completes the proof of part (2) of the proposition.
	}
\end{proof}

\blue{In the case that the Lagrangian is \emph{projectable} onto the base manifold
i.e., that the projection map is a diffeomorphism,} we can parametrize it using a phase function $\phi$ with
$0$ phase variables, and a simpler argument establishes the result in
Proposition \ref{HM1} without the restriction that $\delta< 1/2$.

\begin{proposition}
\label{HM2}
\red{Let $\delta \in [0,1]$ and suppose $u_h$ is a semiclassical distribution with $\delta$-Lagrangian regularity with respect to an arbitrary Lagrangian $\lag\subset T^*X$.}
For every point $\gamma=(x_0,\xi_0)\in U\cap\lag$ at which $\lag$ is projectable and parametrized by the phase function $\phi(x)$, we can find a symbol $a(x)$ in the class $S^{\frac{n\delta}2}_\delta(\RR^{n})$ such that
\blue{$$
u_h(x)=a(x) e^{i\phi(x)/h}
$$}%
microlocally near $\gamma$.
\end{proposition}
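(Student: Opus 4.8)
The plan is to conjugate away the oscillatory factor $e^{i\phi/h}$ and reduce to the Sobolev‑type estimate of Lemma~\ref{lemma:semisobolev}. Since the assertion is microlocal near $\gamma=(x_0,\xi_0)$, I would first replace $u_h$ by $Bu_h$ where $B\in\Psi_h$ has microsupport in a small neighborhood of $\gamma$ on which $\lag$ is projectable and $\sigma_h(B)\equiv 1$ near $\gamma$; as noted at the start of this section $Bu_h$ is again $\delta$-Lagrangian, it agrees with $u_h$ microlocally near $\gamma$, and it is compactly microsupported in the region where $\lag=\{(x,\partial_x\phi(x))\}$ (so in particular $\xi_0=\partial_x\phi(x_0)$). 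Near $\gamma$ the module of symbols of operators in $\Psi_h^{-\infty}$ characteristic to $\lag$ is generated over smooth symbols by the $n$ functions $\xi_j-\partial_{x_j}\phi(x)$, so by the usual elliptic‑regularity reduction (already used in the proof of Proposition~\ref{HM1}) $\delta$-Lagrangian regularity of $u_h$ with respect to $\lag$ is equivalent to the iterated bounds
$$
\norm{Q_{j_1}\cdots Q_{j_N} u_h}_{L^2}=O(h^{N(1-\delta)}),\qquad Q_j:=hD_{x_j}-\partial_{x_j}\phi(x),
$$
for all $N$ and all choices of indices, where the $Q_j$ are understood to be composed with fixed compactly supported cutoffs equal to $1$ on the microsupport (which we suppress, the commutator errors being $O(h)=O(h^{1-\delta})$ at each stage).

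Now put $a(x;h):=e^{-i\phi(x)/h}u_h(x)$. A one‑line computation gives the conjugation identity $e^{-i\phi/h}\,Q_j\,e^{i\phi/h}=hD_{x_j}$, hence $e^{-i\phi/h}\,Q_{j_1}\cdots Q_{j_N}u_h=(hD_{x_{j_1}})\cdots(hD_{x_{j_N}})\,a$; since $|e^{-i\phi/h}f|=|f|$ pointwise, the $L^2$ norms are unchanged and the displayed estimates become
$$
\norm{(hD_x)^\alpha a}_{L^2}=O(h^{(1-\delta)|\alpha|})\qquad\text{for all }\alpha .
$$
Equivalently (again by the elliptic‑regularity reduction, now for the zero section, whose characteristic module is generated by $\xi_1,\dots,\xi_n$) $a$ is a $\delta$-Lagrangian distribution with respect to the zero section: its semiclassical wavefront set has been translated to lie near $(x_0,0)$, and $a$ has compact $x$-support modulo $O(h^\infty)$. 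Lemma~\ref{lemma:semisobolev} therefore gives $a\in S^{n\delta/2}_\delta(\RR^n)$, with seminorms controlled by finitely many $\delta$-Lagrangian seminorms of $u_h$, and unwinding the definition of $a$ gives $u_h=a(x)e^{i\phi(x)/h}$ microlocally near $\gamma$.

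Because there is no $\theta$-integration, and hence no stationary phase, there are no remainder terms to control, and the restriction $\delta<1/2$ of Proposition~\ref{HM1} is not needed here — this is precisely why the projectable case is treated separately. Accordingly I expect no essential obstacle; the only points requiring care are routine microlocal bookkeeping: legitimizing the use of $Q_j$ (and of $hD_{x_j}$, which is not literally in $\Psi_h^{-\infty}$) as test operators against the compactly microsupported $u_h$ up to $O(h^\infty)$ errors, and checking that the cutoff commutators appearing upon iteration contribute only harmless $O(h)=O(h^{1-\delta})$ factors. Both are standard, and the passage between the abstract testing definition and the concrete generators $Q_j$ (resp.\ $hD_{x_j}$) is the same elliptic‑regularity argument invoked in the proof of Proposition~\ref{HM1}.
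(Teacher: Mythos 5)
Your proposal is correct and follows essentially the same route as the paper: conjugate by $e^{-i\phi/h}$ to turn the iterated estimates with the generators $hD_{x_j}-\partial_{x_j}\phi$ into $\norm{(hD_x)^\alpha a}_{L^2}=O(h^{(1-\delta)|\alpha|})$, then apply the semiclassical Sobolev embedding (the content of Lemma~\ref{lemma:semisobolev}) to place $a$ in $S^{n\delta/2}_\delta$. The extra microlocal bookkeeping you include (cutoffs, commutator errors, elliptic reduction to the generators) is the same reduction the paper invokes implicitly, so there is nothing further to add.
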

\begin{proof}
From the assumptions on $\lag$, we can find a bounded open set $W\subset \RR^n$ with 
\[\lag\cap U=\{(x,\partial_x\phi(x))\in \RR^{2n}:x\in W\}.\]
The symbols
\[b_j:=\xi_j-\partial_{x_j}\phi\]
are then characteristic to $\lag\cap U$ and by partial Lagrangian regularity, we have
\[\|(hD-\partial_{x}\phi)^\alpha u_h\|_{L^2} =O(h^{(1-\delta)|\alpha|}).\]
Setting 
\[a(x)=u_h(x)e^{-i\phi(x)/h}\]
we obtain
\[\|\partial^\alpha a\|_{L^2}=h^{-|\alpha|}\|(hD-\partial_x \phi)^\alpha u_h\|_{L^2}=O(h^{-\delta|\alpha|}).\]
Sobolev embedding yields
\[\|\partial^\alpha a\|_{L^\infty} =O(h^{-\delta(|\alpha|+n/2)})\]
and so we have $a\in S_\delta^{n\delta/2}(\RR^n)$.
\end{proof}

More generally, we now show that we can also obtain Fourier integral representations
for $\delta$-Lagrangian distributions with $\delta \geq 1/2$, provided
we restrict ourselves to a 
particular class of phase functions.

As in the proof of Proposition $\ref{HM1}$, it suffices to treat the case where $\lag\cap U$ is transverse to the constant section $\xi=\xi_0$ at $\gamma$. Under this assumption, we can locally parametrize our Lagrangian as
\[\lag\cap U=\{(\pa_\xi H(\xi),\xi):\xi\in W\}\] 
for some smooth function $H$ \blue{and open set $W$.} If the point $\gamma\in \lag$ does not
lie on the zero section, then we can always obtain this transversality
condition by choosing coordinates on the base space appropriately
\cite[p.~102]{Grigis-Sjostrand}. 
After choosing such coordinates, one possible choice of phase function to locally parametrize $\teal{\lag}$ is
\[\phi(x,\theta)=x\cdot \theta-H(\theta).\]
For this particular choice of phase function we have a simpler
argument to arrive at the analogous result to Proposition \ref{HM1},
valid for all $\delta\in[0,1]$.  Recall that $X$ denotes a smooth $n$-manifold.
\begin{proposition}
 \label{HM.explicitphase}
\begin{enumerate}
Let $\delta \in [0,1].$
\item
Suppose $u_h$ is a semiclassical distribution with $\delta$-Lagrangian regularity with respect to \teal{an arbitrary Lagrangian} $\lag\subset T^*X$.
For every point $\gamma\in \lag$, we can choose local coordinates on
$X$ and find a symbol $a(\theta)$ in the class $S_\delta^{\frac
  n2+\frac{n\delta}2}(\RR^{2n})$ and a  function $\psi\in \mathcal{C}^\infty(\RR^n)$ such that
\blue{\[u_h(x)=I(a,\phi)[x]=\int_{\RR^n}a(\theta)e^{i(x\cdot\theta-H(\theta)-\psi(x))/h}\, d\theta\]}%
microlocally near $\gamma$.

If $\gamma$ does not lie on the zero section of $T^*X$ then we can take $\psi=0$.

\item \teal{Conversely, for a Lagrangian locally parametrized as
	$$\lag\cap U=\{(\partial_\xi H(\xi),\xi):\xi \in W\} $$
	and $a\in S_\delta^{\frac{n}{2}}$ supported in
        $W$, $$u_h(x)=I(a,\phi)[x]=\int_{\RR^n}
        a(\theta)e^{i(x\cdot\theta-H(\theta))/h}\, d\theta$$
        determines a $\delta$-Lagrangian distribution with respect to
        $\lag$ 
with $\WF_h(u_h)\subset U$; moreover $\|u_h\|_{L^2}$ is bounded.}

\end{enumerate}
\end{proposition}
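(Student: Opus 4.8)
The plan is to handle the special phase function $\phi(x,\theta)=x\cdot\theta-H(\theta)$ directly, reducing both implications to the Fourier-transform criterion \eqref{dlagcriterion} from the proof of Proposition~\ref{HM1}, but now exploiting the fact that with this phase the oscillatory integral $I(a,\phi)$ is literally a rescaled inverse semiclassical Fourier transform of $a$, so no stationary phase and hence no $\delta<1/2$ restriction is needed. First I would note, exactly as in Propositions~\ref{HM1} and \ref{HM2}, that it suffices to treat a Lagrangian transverse to the constant section (handling the general case by the same symplectic/conjugation reduction $u_h\mapsto e^{iDx''\cdot x''/2h}u_h$ used at the end of the proof of Proposition~\ref{HM1}, and, if $\gamma$ lies on the zero section, accounting for the extra $\psi(x)$ by choosing coordinates), so write $\lag\cap U=\{(\partial_\xi H(\xi),\xi):\xi\in W\}$ and set $b_j=x_j-\partial_{\xi_j}H(hD)$ as the generators of the module of operators characteristic to $\lag$.

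For part (1): by elliptic regularity $\delta$-Lagrangian regularity is equivalent to $\|(x-\partial_\xi H(hD))^\alpha u_h\|_{L^2}=O(h^{(1-\delta)|\alpha|})$ for all $\alpha$; taking $\F_h$ and applying Plancherel converts this, via the substitution $v_h(\xi)=e^{iH(\xi)/h}\F_h u_h(\xi)$, into $\|\partial^\alpha v_h\|_{L^2}=O(h^{n/2-\delta|\alpha|})$, i.e. $h^{n/2}v_h$ is $\delta$-Lagrangian with respect to the zero section, so by Lemma~\ref{lemma:semisobolev} $v_h\in S_\delta^{n(\delta-1)/2}$. But
\[
I(a,\phi)[x]=\int_{\RR^n}a(\theta)e^{i(x\cdot\theta-H(\theta))/h}\,d\theta=(2\pi h)^n\,\F_h^{-1}\!\left(e^{-iH(\cdot)/h}a\right)(x),
\]
so to get $u_h=I(a,\phi)$ microlocally near $\gamma$ it suffices to take $a(\theta)=(2\pi h)^{-n}e^{iH(\theta)/h}\F_h u_h(\theta)=(2\pi h)^{-n}v_h(\theta)$, cut off smoothly to $W$; the cutoff only changes $u_h$ by an $O_{L^2}(h^\infty)$, $\WF_h$-negligible term by nonstationary phase (the phase $x\cdot\theta-H(\theta)$ has no critical point in $\theta$ for $x$ outside the projection of $\lag\cap U$). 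The symbol order is $n(\delta-1)/2+n=n/2+n\delta/2$, as claimed. For part (2), run the same computation backwards: given $a\in S_\delta^{n/2}$ supported in $W$, set $v_h=(2\pi h)^n a$ so $v_h\in S_\delta^{n\delta/2}\subset S_\delta^{n/2-n+ n\delta/2+\dots}$—more precisely $\|\partial^\alpha v_h\|_{L^\infty}=O(h^{-n/2-\delta|\alpha|}h^{n})$, hence $\|\partial^\alpha v_h\|_{L^2}=O(h^{n/2-\delta|\alpha|})$ on the bounded set $W$—and then $u_h=I(a,\phi)$ has $e^{iH/h}\F_h u_h=v_h$ satisfying the right-hand side of \eqref{dlagcriterion}, while $\F_h u_h=e^{-iH/h}v_h$ is supported in $W$ so $\WF_h(u_h)\subset U$ and $\|u_h\|_{L^2}=(2\pi h)^{-n/2}\|v_h\|_{L^2}=O(1)$ by Plancherel; thus $u_h$ is $\delta$-Lagrangian and $L^2$-bounded.

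The main obstacle—or rather the point that needs genuine care rather than being a real difficulty—is the off-transversality reduction when $\gamma$ may lie on the zero section: there the conjugation trick that worked in Proposition~\ref{HM1} tilts the Lagrangian away from the zero section but the resulting phase is $\tilde\phi(x,\theta)=x\cdot\theta-\tilde H(\theta)+\tfrac12 Dx''\cdot x''$, and one must absorb the quadratic $x$-term into the stated function $\psi(x)$; I would check that the symbol class and $\WF_h$ statements are stable under multiplication by the unimodular factor $e^{i\psi(x)/h}$ (it is, since $\psi$ is a fixed smooth function independent of $h$ and multiplication by $e^{i\psi/h}$ is a zeroth-order FIO-type operation preserving all the relevant estimates), and verify the claim that $\psi$ can be taken to vanish precisely when $\gamma$ is off the zero section. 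Everything else is a direct unwinding of definitions and Plancherel, with Lemma~\ref{lemma:semisobolev} doing the work that stationary phase did in Proposition~\ref{HM1}; the payoff of the explicit phase is exactly that this works for all $\delta\in[0,1]$.
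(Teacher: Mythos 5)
Your proposal is correct and is essentially the paper's argument: in the transverse case you define $a=(2\pi h)^{-n}e^{iH/h}\F_h u_h$ and read the symbol bounds off the $L^\infty$ estimate \eqref{vest} (i.e.\ Lemma~\ref{lemma:semisobolev} applied to $v_h$), and part (2) is exactly the paper's one-line deduction from \eqref{aurel} and \eqref{dlagcriterion} plus Plancherel; no stationary phase is used, which is why all $\delta\in[0,1]$ are allowed. The only place you diverge is the non-transverse/zero-section reduction: the paper multiplies by $e^{i\psi/h}$ with an \emph{arbitrary} smooth real $\psi$ satisfying $\psi'(x_0)\neq 0$, which pushes $\gamma$ off the zero section so that transversality can then be arranged by a base coordinate change (the Grigis--Sj\"ostrand fact), and this $\psi$ is precisely the function appearing in the statement; you instead recycle the quadratic conjugation $e^{iDx''\cdot x''/2h}$ from Proposition~\ref{HM1}, absorbing the quadratic term into $\psi$. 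Both work (the quadratic shear restores transversality to the constant section whether or not $\gamma$ lies on the zero section, and conjugation by a fixed unimodular phase preserves $\delta$-Lagrangian regularity for every $\delta$), so the difference is cosmetic: the paper's choice emphasizes that any $\psi$ with nonvanishing differential does the job, yours produces a concrete quadratic $\psi$. One small quibble: the $O(h^\infty)$ error from cutting $v_h$ off to $W$ is not a nonstationary-phase gain (which would be degraded by the $S_\delta$ symbol class for $\delta$ near $1$) but simply the fact that $\F_h u_h$ is $O(h^\infty)$ outside a neighborhood of the $\xi$-projection of the compact microsupport; with that correction the step is fine, and indeed the paper dispenses with the cutoff altogether.
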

\begin{proof}
	\teal{We begin by proving part (1) of the proposition.}
	We may assume without loss of generality that $u_h$ is compactly microlocalized in a neighbourhood $U$ of $\gamma$ by applying a microlocal cutoff.
	
	First we suppose that $\gamma$ does not lie in the zero
        section. Then again by choosing coordinates on the base space appropriately we can locally parametrize our Lagrangian $\lag$ as
	\[\lag\cap U=\{(\pa_\xi H(\xi),\xi):\xi\in W\}\]
	in induced canonical coordinates $(x,\xi)$, for some $H\in\mathcal{C}^\infty(\RR^n)$ where $U\subset T^*X$ and $W\subset \RR^n$ are open and bounded.
	Setting
	\begin{equation}\label{aurel}a=(2\pi h)^{-n}\mathcal{F}_hu_h\cdot e^{iH/h},\end{equation}
	semiclassical Fourier inversion immediately yields the sought Fourier integral representation, and from \eqref{vest}, it follows that 
	\[\|\partial^\alpha a\|_{L^\infty} =O\left(h^{-\frac{n(1+\delta)}{2}-\delta|\alpha|}\right)\]
	as required.
	
	On the other hand, if $\gamma=(x_0,\xi_0)$ does lie in the zero section, we consider the distribution $\tilde{u}_h=e^{i\psi/h}u_h$ for an arbitrary smooth real-valued $\psi$ with $\psi'(x_0)\neq 0$.
	Since $u_h$ is $\delta$-Lagrangian with respect to $\lag$, for any collection of operators $A_j\in \Psi_h^{-\infty}$ that are characteristic to $\lag$ we have the iterated regularity estimate
	\[\left\| \left(\prod_{j=1}^N e^{i\psi/h}A_je^{-i\psi/h}\right)\tilde{u}_h\right\|_{L^2} =O(h^{(1-\delta)N}). \]
	By Egorov's theorem, each of the operators 
	\[\tilde{A}_j=e^{i\psi/h}A_je^{-i\psi/h}\]
	is itself a semiclassical pseudodifferential operator, with principal symbol
	\[\sigma(\tilde{A}_j)=\sigma(A_j)(x,\xi-\psi'(x).)\]
	It follows that $\tilde{u}_h$ enjoys $\delta$-Lagrangian regularity with respect to
	\[\tilde{\lag}=\{(x,\xi-\psi'(x)):(x,\xi)\in\lag\}\]
with $\tilde{\gamma}\equiv\gamma+(0,\psi'(x))$ not lying in the zero section.
We can now choose coordinates on the base space $X$ such that in the
associated canonical coordinates, the Lagrangian $\tilde{\lag}$ is
locally parametrized near $\tilde{\gamma}$ by 
\[\tilde{\lag}\cap \tilde{U}=\{(\pa_\xi H(\xi),\xi):\xi\in \tilde{W}\} \]
for some $H\in\mathcal{C}^\infty(\RR^n)$ \blue{and for some open set $\tilde{W}$.}%
We can now treat $\tilde{u}_h$ as was done for $\gamma$ off the zero section, obtaining the oscillatory integral representation
\[\tilde{u}_h(x)=\int_{\RR^n} a(\theta)e^{i(x\cdot\theta-H(\theta))/h} \, d\theta\]
microlocally near $\tilde{\gamma}$ and consequently
\[u_h(x)=\int_{\RR^n} a(\theta)e^{i(x\cdot\theta-H(\theta)-\psi(x))/h} \, d\theta\]
microlocally near $\gamma$.

\teal{Part (2) of the proposition follows immediately from \eqref{aurel} and \eqref{dlagcriterion}.}
\end{proof}

\subsection{Improvements for quasimodes}

We now additionally assume that the $\delta$-Lagrangian distribution $u_h$ satisfies
\[\|Pu_h\|_{L^2} =O(h)\] for a semiclassical pseudodifferential operator
$P$ of real principal type, with principal symbol $p$
\blue{characteristic to $\lag,$ i.e., vanishing on it.} Note that this hypothesis can be localized, as if $B$ is a
pseudodifferential operator with compact microsupport, then we also have
\[\|P B u_h\|_{L^2} =O(h).\]

Under the hypotheses that $u_h$ is such a quasimode, we will obtain an improvement to Proposition \ref{HM1} and
Proposition \ref{HM.explicitphase}. The first step is obtaining a
mixed iterated regularity estimate.

\begin{lemma}
	\label{HM.mixed}
	Suppose $u_h$ is a compactly microlocalized $\delta$-Lagrangian distribution with respect to $\lag$ that additionally satisfies
	\blue{\[\|Pu_h\|_{L^2}=O(h),\quad \|u_h\|_{L^2}=1\]
	where $P$ is a semiclassical pseudodifferential operator characteristic to $\lag$.}
	Then for any $\epsilon>0$, $u_h$ enjoys the mixed iterated regularity estimate
	\begin{equation}\label{PA}\|PA_1\ldots A_N u_h\|_{L^2}=O(h^{N(1-\delta)+1-\epsilon})\end{equation}
	for any $A_j\in \Psi^{-\infty}$ characteristic to $\lag$.
\end{lemma}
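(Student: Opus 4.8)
The plan is to reduce everything to a fixed finite list of self-adjoint generators of the module of operators characteristic to $\lag$, dispose of the commutator terms cheaply, and then run a Cauchy--Schwarz bootstrap that upgrades the gain coming from the quasimode hypothesis from order $h^{1-\delta}$ to order $h^{1-\epsilon}$.

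\emph{Setup.} Since $u_h$ is compactly microlocalized we may work microlocally near $\WF_h(u_h)$, where $\lag$ has a regular system of defining functions $b_1,\dots,b_n$; quantizing their (real) symbols produces formally self-adjoint operators $B_1,\dots,B_n\in\Psi_h^{-\infty}$ with $\sigma_h(B_j)=b_j$, and (as noted in the torus discussion) $\delta$-Lagrangian regularity of $u_h$ is equivalent to the bounds $\|B_{j_1}\cdots B_{j_M}u_h\|_{L^2}=O(h^{M(1-\delta)})$ for all words. Note that $P$ itself, composed with a microlocal cutoff elliptic on $\WF_h(u_h)$, is an admissible test operator in that definition, since $\sigma_h(P)$ vanishes on $\lag$. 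Writing a general characteristic $A\in\Psi_h^{-\infty}$ as $A=\sum_j C_jB_j+hE$ with $C_j,E\in\Psi_h^{-\infty}$ and normal-ordering the $C$'s and $E$'s to the left (each commutation costing a factor of $h$), one expands $A_1\cdots A_N=\sum_{M=0}^N h^{N-M}\sum_{|w|=M}G_w^{(M)}B_w$ with $G_w^{(M)}\in\Psi_h^{-\infty}$ uniformly $L^2$-bounded and $B_w=B_{j_1}\cdots B_{j_M}$. Hence, setting $f_h:=Pu_h$ and $\mu_M:=\max_{|w|=M}\|B_wf_h\|_{L^2}$, it suffices to bound the $\mu_M$ and then combine $\|A_1\cdots A_Nf_h\|\le C\sum_{M=0}^N h^{N-M}\mu_M$ with $\|PA_1\cdots A_Nu_h\|\le\|A_1\cdots A_Nf_h\|+\|[P,A_1\cdots A_N]u_h\|$.

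\emph{The commutator terms.} Because $\sigma_h(P)$ vanishes on the Lagrangian $\lag$, its Hamilton field is tangent to $\lag$, so $\{\sigma_h(P),b_j\}$ and, more generally, $\{\sigma_h(P),\sigma_h(A)\}$ vanish on $\lag$; thus $h^{-1}[P,B_j]$ and $h^{-1}[P,A]$ lie in $\Psi_h^{-\infty}$ and are again characteristic to $\lag$. Expanding $[P,A_1\cdots A_N]$ as a sum of products in which exactly one factor is such a commutator then gives $\|[P,A_1\cdots A_N]u_h\|=O(h^{N(1-\delta)+1})$, which is more than enough. Applying the same computation to $B_wf_h=PB_wu_h+[B_w,P]u_h$, and using that $P$ is an admissible test operator (so $\|PB_wu_h\|=O(h^{(M+1)(1-\delta)})$) while $\|[B_w,P]u_h\|=O(h^{M(1-\delta)+1})$, yields the starting estimates $\mu_0=\|f_h\|_{L^2}=O(h)$ and $\mu_M=O(h^{(M+1)(1-\delta)})=O(h^{M(1-\delta)+1-\delta})$ for $M\ge1$.

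\emph{The bootstrap, which is the crux.} The naive alternative --- commuting $P$ all the way to the right and using $\|A_1\cdots A_NPu_h\|\le C\|Pu_h\|=O(h)$ --- fails, because $O(h)$ is weaker than the target $O(h^{N(1-\delta)+1-\epsilon})$; equivalently, any direct induction on $N$ is circular, since $\|A_1\cdots A_NPu_h\|$ is essentially the assertion itself. Instead I exploit self-adjointness of the $B_j$: for $|w|=M$,
\[
\|B_wf_h\|^2=\langle B_w^*B_wf_h,\,f_h\rangle\le\|B_{w'}f_h\|\,\|f_h\|\le\mu_{2M}\,\mu_0,
\]
where $B_w^*B_w=B_{j_M}\cdots B_{j_1}B_{j_1}\cdots B_{j_M}=B_{w'}$ is a word of length $2M$; hence $\mu_M^2\le C\,\mu_{2M}\,\mu_0=O(h)\cdot\mu_{2M}$. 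If $\mu_{2M}=O(h^{2M(1-\delta)+1-\delta 2^{-k}})$, this yields $\mu_M=O(h^{M(1-\delta)+1-\delta 2^{-k-1}})$. Starting from $k=0$ (which is precisely the estimate just obtained) and inducting on $k$, we get $\mu_M=O(h^{M(1-\delta)+1-\delta 2^{-k}})$ for every $k\ge0$ and every $M$. Given $\epsilon>0$, choose $k$ with $\delta 2^{-k}<\epsilon$; then $\mu_M=O(h^{M(1-\delta)+1-\epsilon})$ for all $M\le N$, and since $h^{N-M}h^{M(1-\delta)+1-\epsilon}=h^{N-M\delta+1-\epsilon}\le h^{N(1-\delta)+1-\epsilon}$ for $0\le M\le N$, the inequalities from the Setup give $\|PA_1\cdots A_Nu_h\|=O(h^{N(1-\delta)+1-\epsilon})$. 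The only delicate point is carrying out the reductions of the Setup cleanly enough that genuinely self-adjoint generators can be inserted into the Cauchy--Schwarz step; the remainder is routine semiclassical calculus.
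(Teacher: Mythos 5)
Your proof is correct and its crux is exactly the paper's: the Cauchy--Schwarz doubling step $\|A_1\cdots A_N Pu_h\|^2=|\langle A_N^*\cdots A_1^*A_1\cdots A_N Pu_h,\,Pu_h\rangle|$, iterated so that the loss in the exponent is halved at each stage and can be made smaller than any $\epsilon$, together with the same disposal of the commutators $[P,A_j]=h\times(\text{characteristic})$. The only difference is that your reduction to self-adjoint generators $B_j$ (with the normal-ordering expansion and the attendant care about global defining functions) is unnecessary: since the adjoint of a characteristic operator is again characteristic, the paper runs the same bootstrap directly on arbitrary words $A_1\cdots A_N$, with the inductive hypothesis quantified over all collections of characteristic operators.
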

\begin{proof}
We have 
\begin{equation}
\label{commutator}
PA_1\ldots A_N u_h=A_1\ldots A_N Pu_h+O(h^{N(1-\delta)+1}) \end{equation}
as each commutator $[P,A]$ has $O(h)$ principal symbol characteristic to $\lag$.
We now proceed inductively to show that
\begin{equation}
\label{ind.iterated}
\|A_1\ldots A_NP u_h\|_{L^2}= O(h^{N(1-\delta)+1-2^{-k}})
\end{equation}
for every non-negative integer $k$. For $k=0$, \eqref{ind.iterated} \red{follows from \eqref{commutator}, $\delta$-Lagrangian regularity and $L^2$-boundedness of $P$}.
Now if we have \eqref{ind.iterated} for a particular $k$ and any collection of characteristic operators, we can compute
\begin{align*}
\|A_1\ldots A_N Pu_h\|_{L^2}^2 &= |\langle A_N^* \ldots A_1^*A_1 \ldots A_N Pu_h,Pu_h \rangle|\\
&= O(h^{2N(1-\delta)+1-2^{-k}})\cdot O(h)\\
&= O(h^{2N(1-\delta)+2-2^{-k}}).
\end{align*}
Taking square roots completes the induction and \yellow{and using
  \eqref{commutator} once more proves \eqref{PA}.}
\end{proof}

\begin{proposition}\label{prop:quasi}
Let $\delta \in [0, 1/2).$  Suppose $u_h$ satisfies the assumptions of
Lemma \ref{HM.mixed} and that $P$ has real-valued principal symbol $p$
satisfying $|\partial p|\neq 0$ on $p^{-1}(0)$. For every point
$\gamma=(x_0,\xi_0)\in U\cap \lag$, we can find a symbol $a(x,\theta)$
in the class $S_\delta^{\frac N2+\frac{(n-1)\delta}{2}\plz}(\RR^{n+N})$ such that 
\blue{\[u_h=I(a,\phi)\]}%
microlocally near $\gamma$.
\end{proposition}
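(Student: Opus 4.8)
We sketch the argument; it follows the skeleton of Proposition~\ref{HM1}, the new input being that the quasimode hypothesis buys \emph{one} almost‑free derivative in the single direction along which the Hamilton flow of $\sigma(P)$ moves inside $\lag$, which cuts the effective dimension of concentration from $n$ to $n-1$ and turns the $n\delta/2$ of Proposition~\ref{HM1} into $(n-1)\delta/2$. As there, one first reduces to the case that $\lag$ is transverse to the constant section $\xi=\xi_0$ at $\gamma$ and is parametrized by the special phase $H(\xi)-x\cdot\xi$ (the non‑transverse case comes from conjugating by $e^{iDx''\cdot x''/2h}$, and the passage to a general phase $\phi$ from the stationary‑phase and Borel‑summation argument of Proposition~\ref{HM1}, into which the improved symbol order for $\tilde v_h$ obtained below simply propagates). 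In the transverse case, set $\tilde v_h=h^{-n/2}e^{iH/h}\F_h u_h$; by \eqref{dlagcriterion} this is, after the rescaling, a $\delta$-Lagrangian distribution with respect to the zero section of $T^*\RR^n$ in the $\xi$ variables, $L^2$-bounded, microsupported near a point of the zero section, and conjugating $Pu_h=O_{L^2}(h)$ by $e^{iH/h}\F_h$ gives $\|Q\tilde v_h\|_{L^2}=O(h)$ for an operator $Q$ with principal symbol $q=p\circ\chi^{-1}$ ($\chi$ the underlying symplectomorphism); by Egorov's theorem $Q$ is of real principal type, $q$ vanishes on the zero section, and $|\partial q|\neq 0$ on $q^{-1}(0)$.

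Since $q$ vanishes on $\{\zeta=0\}$ (here $\zeta$ is the cotangent fibre variable over the $\xi$‑space) we write $q=\sum_j q_j(\xi,\zeta)\zeta_j$; nondegeneracy forces the field $v=(q_j(\cdot,0))_j$, which is the restriction of $H_q$ to the zero section, to be nonvanishing near the base point. Conjugating by an $h$-FIO quantizing the cotangent lift of a base diffeomorphism straightening $v$ to $\partial_{\xi_n}$ (this preserves $\delta$-Lagrangian regularity with respect to the zero section, moves the microsupport by the diffeomorphism, and transforms $Q$ compatibly) we may assume $q_j(\xi,0)=0$ for $j<n$, and hence
\[ Q=\widehat q_n(\xi,hD)\,(hD_{\xi_n})+R_2+hR_0, \]
with $\widehat q_n$ elliptic near the microsupport of $\tilde v_h$, $R_2=\sum_{j,k<n}c_{jk}(\xi,hD)(hD_{\xi_j})(hD_{\xi_k})$ of second order in the characteristic fields $hD_{\xi'}$, and $R_0$ of order $0$. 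Applying Lemma~\ref{HM.mixed} to $Q$ with the (microlocalized) characteristic operators $(hD_{\xi_n})^{k-1}(hD_{\xi'})^{\gamma'}$ gives $\|Q(hD_{\xi_n})^{k-1}(hD_{\xi'})^{\gamma'}\tilde v_h\|_{L^2}=O(h^{(k-1+|\gamma'|)(1-\delta)+1-\ep})$ for every $\ep>0$; feeding in the displayed decomposition, the $R_0$-term is smaller by $h^{\ep}$ and, crucially because $\delta<\tfrac12$, the second‑order term $R_2$ is smaller by $h^{1-2\delta+\ep}$, both controlled by the plain $\delta$-Lagrangian bounds for $\tilde v_h$. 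Inverting $\widehat q_n(\xi,hD)$ microlocally then yields, for all $k\geq 1$ and all $\gamma'$,
\[ \|\partial_{\xi_n}^{\,k}\partial_{\xi'}^{\gamma'}\tilde v_h\|_{L^2}=O(h^{-\delta(k-1)-\delta|\gamma'|-\ep}), \]
i.e.\ one $\xi_n$-derivative is paid for by only $h^{-\ep}$ instead of $h^{-\delta}$.

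To finish, for $\beta=(\beta',0)$ estimate $\sup|\partial_{\xi'}^{\beta'}\tilde v_h|$ by the proof of Lemma~\ref{lemma:semisobolev}, but with an \emph{anisotropic} unitary rescaled Fourier transform (scale $h^\delta$ in $\xi'$, scale $h^{\ep}$ in $\xi_n$, so inversion constant $\sim h^{-(n-1)\delta/2-\ep/2}$) paired by Cauchy--Schwarz with the $L^2$-integrable weight $\ang{\zeta'}^{-s'}\ang{\zeta_n}^{-s_n}$, $s'>\tfrac{n-1}2$, $s_n\in(\tfrac12,1]$; since $s_n\leq 1$ only the zeroth and first $\zeta_n$-moments of the transform occur, and these are $O(1)$ by plain regularity and by the key estimate respectively, giving $\sup|\partial_{\xi'}^{\beta'}\tilde v_h|=O(h^{-(n-1)\delta/2-\delta|\beta'|-\ep})$. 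For $\beta$ with $\beta_n\geq 1$ the key estimate already shows $\partial^\beta\tilde v_h$ obeys the \emph{isotropic} $\delta$-Lagrangian $L^2$ bounds of Lemma~\ref{lemma:semisobolev} with prefactor $h^{-\delta(\beta_n-1)-\delta|\beta'|-\ep}$, so that lemma gives $\sup|\partial^\beta\tilde v_h|=O(h^{-(n-2)\delta/2-\delta|\beta|-\ep})$, which is stronger than needed. Collecting, $\tilde v_h\in S_\delta^{(n-1)\delta/2\plz}$, and unwinding the reduction yields $u_h=I(a,\phi)$ with $a\in S_\delta^{N/2+(n-1)\delta/2\plz}$.

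The main obstacle is the key estimate. A single factor of $P$ cannot be iterated, so the naive move—dividing $\|Q\tilde v_h\|_{L^2}=O(h)$ by $h$—restores only the generic $\delta$-regularity and gains nothing; the improvement is unlocked only after straightening the bicharacteristic foliation of $\lag$, which isolates the distinguished derivative as a single coordinate derivative, and it uses $\delta<\tfrac12$ essentially to push the quadratic remainder $R_2$ below the main term. A secondary subtlety is that one obtains only \emph{one} almost‑free derivative, and it must be observed that this already suffices: the anisotropic weight in the final step needs only one extra $\zeta_n$-moment, and higher $\xi_n$-derivatives of $\tilde v_h$ are absorbed, with room to spare, into the improved prefactor rather than requiring further free differentiations.
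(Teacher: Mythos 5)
Your argument is correct, and its front end (reduction to the transverse case, conjugation by $e^{iH/h}\F_h$ so that the conjugated operator $Q$ is characteristic to the zero section with nonvanishing fibre gradient, and the use of Lemma~\ref{HM.mixed} transported to the $\xi$-side) coincides with the paper's; but the core analytic step is genuinely different. The paper does not straighten anything by an FIO: from $\partial_\xi q\neq 0$ it factors $q=e(x,\xi)(\xi_1-b(x,\xi'))$ after reordering coordinates, divides by the microlocally elliptic factor $e$ to get $\|(hD_{x_1}-b(x,hD'))D^\alpha v_h\|_{L^2}=O(h^{n/2-\delta|\alpha|+1-\ep})$, and then applies the semiclassical energy estimate \cite[Lemma~7.11]{Zw:12} to this evolution operator, yielding the uniform slice bounds $\|D^\alpha v_h(x_1,\cdot)\|_{L^2(\RR^{n-1})}=O(h^{n/2-\delta|\alpha|-\ep})$; Sobolev embedding in the remaining $n-1$ variables then gives \eqref{improved.aest} directly, so the supremum in the distinguished variable comes for free from the energy estimate rather than from any embedding. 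You instead conjugate by an FIO to make the Hamilton direction a coordinate, decompose $Q=\widehat q_n\,(hD_{\xi_n})+R_2+hR_0$, absorb the doubly characteristic remainder $R_2$ using $\delta<\tfrac12$, invert $\widehat q_n$ elliptically to gain one almost-free $L^2$ derivative in $\xi_n$, and finish with an anisotropic Sobolev embedding needing only one $\zeta_n$-moment. Both routes land on the same estimate \eqref{improved.aest} and then conclude identically through the machinery of Proposition~\ref{HM1}. The trade-off: the paper's factorization does your ``straightening'' implicitly and avoids both the FIO conjugation and the $R_2$ bookkeeping, at the cost of citing the evolution estimate; your route is self-contained on that point but uses $\delta<\tfrac12$ a second time (harmless, since it is hypothesized) and requires the minor observation, which you should make explicit, that the cross terms $\zeta_j\zeta_n$ ($j<n$) in the Hadamard expansion of $q$ must be absorbed into the coefficient of $\zeta_n$, which remains elliptic at $\zeta=0$. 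Your closing remark that the improvement is ``unlocked only after straightening'' overstates the necessity of that step, as the paper's proof shows, but this does not affect the validity of your argument.
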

\begin{proof}
Following the proof of Proposition \ref{HM1}, we can assume that $\lag$ is transverse to the constant section at $\gamma$. It then suffices to prove the estimate
\[\|\partial^\alpha v_h\|_{\infty} =O(h^{n/2-\delta(|\alpha|+(n-1)/2)-\ep}),\]
where $v_h(\xi)=\F_h{u_h}(\xi)e^{iH(\xi)/h}$, improving on \eqref{vest} by a factor of $h^{\delta/2-\ep}$. 

We do this by computing
\begin{align*}
P(x-\partial_\xi H(hD))^\alpha u_h&=  P(x-\partial_\xi H(hD))^\alpha \mathcal{F}_h^{-1} (e^{-iH/h}v_h) \\
&=   P \mathcal{F}_h^{-1} (e^{-iH/h}(-hD)^\alpha v_h).
\end{align*}

From Lemma \ref{ind.iterated} and Plancherel's theorem, it follows that
\begin{equation}
\label{v.l2}
\|QD^\alpha v_h\|_{L^2}=O(h^{\frac{n}{2}-\delta|\alpha|+1-\epsilon})
\end{equation}
where $Q=e^{iH/h}\mathcal{F}_hP\mathcal{F}_h^{-1} e^{-iH/h},$ with the exponential functions being regarded as multiplication operators. The principal symbol of $Q$ is given by 
\[q(x,\xi)=\sigma(\mathcal{F}_hP\mathcal{F}_h^{-1})(x,\xi+\partial_x H)=p(-\xi+\partial_x H,x)\]
from Egorov's theorem, so $Q$ is characteristic to the zero section. 
As $P$ was of real principal type, and characteristic to the Lagrangian $\lag$ which is locally projectable in $\xi$, we have $\partial_x p \neq 0$ and so $\partial_\xi q \neq 0$.
Reordering indices, we can assume
\[q(x,\xi)=e(x,\xi)(\xi_1-b(x,\xi')).\]
with $e,b\in \mathcal{C}^\infty$ and $e(x_0,\xi_0)\neq 0$, where we have split $\xi=(\xi_1,\xi')$.
By shrinking the initial microlocal cutoff of $u_h$ if necessary, the local ellipticity of $e$ together with \eqref{v.l2} implies
\begin{equation}\label{evolution}\|(hD_{x_1}-b(x,hD'))D^\alpha v_h\|_{L^2}= O(h^{\frac{n}{2}-\delta|\alpha|+1-\epsilon}).\end{equation}
Recall that we may microlocalize $u_h$ as finely as we like at the
outset without affecting the hypotheses of this proposition, hence we
assume without loss of generality that $v_h=O(h^\infty)$ outside a small
neighborhood of $\xi_0.$  Consequently
\eqref{evolution} together with \cite[Lemma~7.11]{Zw:12}  implies
\[\|D^\alpha v_h(x_1,\cdot)\|_{L^2(\RR^{n-1})}=O(h^{\frac{n}{2}-\delta|\alpha|-\epsilon}).\]
Again using the fact that $v$ is compactly supported modulo residual terms, Sobolev embedding in the remaining $n-1$ variables yields
\begin{equation}
\label{improved.aest}
\|\partial^\alpha v_h\|_{L^\infty} =O(h^{\frac{n}{2}-\frac{\delta(n-1)}{2}-\delta|\alpha|-\epsilon})
\end{equation}
as required.

\end{proof}

As in Proposition \ref{HM2}, we have a simpler argument in the case
that the Lagrangian is projectable onto $X$, that parametrizes $\lag$
using a phase function $\phi$ with $0$ phase variables.

\begin{proposition}
	\label{HM2.quasi}
	Suppose $u_h$ satisfies the assumptions of Lemma
        \ref{HM.mixed} and that $P$ has  real-valued principal symbol
        $p$ satisfying $|\partial p|\neq 0$ on $p^{-1}(0)$. For every
        point $\gamma=(x_0,\xi_0)\in \lag\cap U$ at which $\lag$ is
        projectable and parametrized by the phase function $\phi(x)$,
        we can find a symbol $a(x)$ in the class
        $S_\delta^{\frac{(n-1)\delta}{2}\plz}(\RR^{n})$ and a function
       $\phi\in\mathcal{C}^\infty(\RR^n)$ such that
\blue{	\[u_h(x)=a(x)e^{i\phi(x)/h}\]}%
	microlocally near $\gamma$. 
\end{proposition}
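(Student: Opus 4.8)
The plan is to mimic the proof of Proposition~\ref{HM2}, using the quasimode hypothesis --- through the mixed iterated regularity estimate of Lemma~\ref{HM.mixed} --- to gain one Sobolev direction, exactly as Proposition~\ref{prop:quasi} improves on Proposition~\ref{HM1}. Since $\lag$ is projectable near $\gamma$, I would write $\lag\cap U=\{(x,\partial_x\phi(x)):x\in W\}$, so that (by elliptic regularity) $\delta$-Lagrangian regularity is equivalent to $\|(hD-\partial_x\phi)^\alpha u_h\|_{L^2}=O(h^{(1-\delta)|\alpha|})$, and set $a(x)=u_h(x)e^{-i\phi(x)/h}$; as in Proposition~\ref{HM2} this gives $\|\partial^\alpha a\|_{L^2}=O(h^{-\delta|\alpha|})$, hence $a\in S^{n\delta/2}_\delta$. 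After a harmless microlocal cutoff I may assume $a$ is compactly supported near $x_0$. The goal is to improve the $L^\infty$ bound to $a\in S^{(n-1)\delta/2\plz}_\delta$.

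To bring in the quasimode information, I would use the identity $(hD-\partial_x\phi)^\alpha u_h=h^{|\alpha|}e^{i\phi/h}D^\alpha a$ and set $Q=e^{-i\phi/h}Pe^{i\phi/h}$, with the exponentials regarded as multiplication operators, so that $P(hD-\partial_x\phi)^\alpha u_h=h^{|\alpha|}e^{i\phi/h}QD^\alpha a$. By Egorov's theorem $Q\in\Psi_h$ has principal symbol $q(x,\xi)=p(x,\xi+\partial_x\phi(x))$, which vanishes at $\xi=0$ since $p$ vanishes on $\lag=\{\xi=\partial_x\phi\}$; thus $Q$ is characteristic to the zero section. The estimate \eqref{PA} of Lemma~\ref{HM.mixed}, applied with the test operators $hD_j-\partial_{x_j}\phi$, then yields $\|QD^\alpha a\|_{L^2}=O(h^{-\delta|\alpha|+1-\ep})$ for all $\alpha$ and all $\ep>0$, an improvement by a power of $h^{1-\ep}$ over the bare $L^2$ bound on $D^\alpha a$. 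Moreover, writing the ideal of $\lag$ as $p=\sum_j e_j(x,\xi)(\xi_j-\partial_{x_j}\phi(x))$ one checks $\partial_\xi p|_\lag=(e_1,\dots,e_n)|_\lag$, which cannot vanish (else $\partial_x p|_\lag$ would also vanish, contradicting $|\partial p|\neq 0$ on $p^{-1}(0)\supseteq\lag$); hence $\partial_\xi q(x_0,0)\neq 0$, and after reordering the $\xi$-variables we may factor $q=e(x,\xi)(\xi_1-b(x,\xi'))$ with $e$ elliptic near $(x_0,0)$ and $\xi=(\xi_1,\xi')$.

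Shrinking the microlocal cutoff of $u_h$, ellipticity of $e$ turns the bound on $\|QD^\alpha a\|_{L^2}$ into the evolution estimate $\|(hD_{x_1}-b(x,hD'))D^\alpha a\|_{L^2}=O(h^{-\delta|\alpha|+1-\ep})$. Since $a=O(h^\infty)$ for $x_1$ outside a small interval, I would integrate this in $x_1$ starting from the edge where $a$ vanishes, using \cite[Lemma~7.11]{Zw:12}, to get $\sup_{x_1}\|D^\alpha a(x_1,\cdot)\|_{L^2(\RR^{n-1})}=O(h^{-1}\cdot h^{-\delta|\alpha|+1-\ep})=O(h^{-\delta|\alpha|-\ep})$, uniformly in $\alpha$. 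Semiclassical Sobolev embedding at scale $h^\delta$ in the remaining $n-1$ variables --- the argument of Lemma~\ref{lemma:semisobolev}, which costs only $h^{-(n-1)\delta/2}$ because applying $h^\delta D'$ is harmless on these slice norms --- then gives $\|\partial^\alpha a\|_{L^\infty}=O(h^{-(n-1)\delta/2-\delta|\alpha|-\ep})$, that is, $a\in S^{(n-1)\delta/2\plz}_\delta$, as claimed. I expect the main obstacle to be the bookkeeping needed to shrink the microsupport of $u_h$ enough that the factorization $q=e(\xi_1-b)$ and the compact $x_1$-support of $a$ are simultaneously available, and to check that the propagation estimate applies in this shrunk neighborhood; this is handled exactly as in Proposition~\ref{prop:quasi}, with Lemma~\ref{HM.mixed} the substantive new input throughout.
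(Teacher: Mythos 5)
Your proposal is correct and follows essentially the same route as the paper: apply Lemma~\ref{HM.mixed} with the test operators $hD_j-\partial_{x_j}\phi$, use real principal type plus projectability to factor the symbol as $e\cdot(\xi_1-b(x,\xi'))$, propagate with \cite[Lemma~7.11]{Zw:12} to get the slice estimates, and conclude by Sobolev embedding in the remaining $n-1$ variables. Your conjugation to $Q=e^{-i\phi/h}Pe^{i\phi/h}$ is only a cosmetic repackaging of the paper's step of estimating $\|(hD_{x_1}-b(x,hD'))e^{i\phi/h}D^\alpha a\|_{L^2}$ directly, since the two norms coincide.
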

\begin{proof}
Choosing $U\subset \RR^{2n}$ a small neighbourhood of $\gamma$ with $\lag\cap U$ projectable, we write
\[\lag \cap U=\{(x,\partial_x\phi(x))\in \RR^{2n}:x\in W\}\]
for a bounded open set $W$. The symbols
\[b_j=\xi_j-\partial_{x_j}\phi\]
are then characteristic to $\lag\cap U$
and by Lemma \ref{HM.mixed} we have
\[\|P(hD-\partial_x\phi)^\alpha u_h\|_{L^2}=O(h^{(1-\delta)|\alpha|+1-0}).\]
Taking $a=u_h e^{-i\phi/h}$ as in the proof of Proposition \ref{HM2}, it follows that
\begin{equation}
\label{eq:projectable.quasimode.estimate}
\|Pe^{i\phi/h}D^\alpha a\|_{L^2}=O(h^{-\delta|\alpha|}).
\end{equation}
As $P$ is of real principal type and is characteristic to the Lagrangian $\lag$, which is locally projectable, we have $\partial_\xi p \neq 0$ and by reordering indices we can write $p$ in the form
\[p(x,\xi)=e(x,\xi)(\xi_1-b(x,\xi'))\]
with $e,b\in\mathcal{C}^\infty$ and $e(x_0,\xi_0)\neq 0$, where we have split $\xi=(\xi_1,\xi')$.
The local ellipticity of $e$ and \eqref{eq:projectable.quasimode.estimate} together show that
\[\|(hD_{x_1}-b(x,hD'))e^{i\phi/h}D^\alpha a\|_{L^2}=O(h^{-\delta|\alpha|+1-0}).\]
As $u_h$ can be assumed to be $O(h^\infty)$ outside a small neighbourhood of $x_0$, we can apply \cite[Lemma~7.11]{Zw:12} once again to obtain
\[\|D^\alpha a(x_1,\cdot )\|_{L^2(\RR^{n-1})}=O(h^{-\delta|\alpha|-0}).\]
Sobolev embedding in the remaining $n-1$ variables yields
\[\|D^\alpha a\|_{L^\infty} = O(h^{-\frac{\delta(n-1)}{2}-\delta|\alpha|-0})\]
as required.
\end{proof}
As in Proposition $\ref{HM.explicitphase}$, we may also dispense with the condition that
$\delta<1/2$ if we specialize to a simple class of phase functions.
\begin{proposition}
  Suppose $u_h$ satisfies the assumptions of Lemma \ref{HM.mixed} and
  that $P$ has real-valued principal symbol $p$ satisfying
  $|\partial p|\neq 0$ on $p^{-1}(0)$. For every point
  $\gamma=(x_0,\xi_0)\in \lag\cap U$, we can choose local coordinates
  on $X$ and find a symbol $a(\theta)$ in the class
  $S_\delta^{\frac n2 +\frac{(n-1)\delta}2 \plz}(\RR^{n})$
  and \blue{functions $H,\ \psi\in\mathcal{C}^\infty(\RR^n)$} such that
\blue{\[u_h(x)=I(a,\phi)[x]=\int_{\RR^n}a(\theta)e^{i(x\cdot\theta-H(\theta)-\psi(x))/h}\,
    d\theta\]}%
microlocally near $\gamma$. If $\gamma$ does not lie on the zero section of $T^*X$ then we can take $\psi=0$.
\end{proposition}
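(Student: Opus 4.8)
The plan is to combine the argument of Proposition~\ref{HM.explicitphase}(1) with the improved symbol estimate extracted in the proof of Proposition~\ref{prop:quasi}, exactly paralleling how Proposition~\ref{HM2.quasi} improves Proposition~\ref{HM2}. As in the proof of Proposition~\ref{HM.explicitphase}, I would first reduce to the case where $\gamma$ does not lie on the zero section: if it does, multiply $u_h$ by $e^{i\psi/h}$ for a suitable $\psi$ with $\psi'(x_0)\neq 0$, use Egorov to see that $\tilde u_h = e^{i\psi/h}u_h$ is still $\delta$-Lagrangian with respect to the shifted Lagrangian $\tilde\lag=\{(x,\xi-\psi'(x))\}$, and check that the quasimode hypothesis persists: $\tilde P \tilde u_h = O_{L^2}(h)$ where $\tilde P = e^{i\psi/h}Pe^{-i\psi/h}$ is again of real principal type with symbol characteristic to $\tilde\lag$, so Lemma~\ref{HM.mixed} applies to $\tilde u_h$. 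Thus we may assume from the outset that $\gamma$ is off the zero section, take $\psi=0$, choose base coordinates so that $\lag\cap U = \{(\pa_\xi H(\xi),\xi):\xi\in W\}$ is transverse to the constant section, and set $a=(2\pi h)^{-n}\F_h u_h \cdot e^{iH/h}$, so that semiclassical Fourier inversion gives the representation $u_h(x)=\int a(\theta)e^{i(x\cdot\theta - H(\theta))/h}\,d\theta$ automatically.

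It then remains only to pin down the symbol class of $a$, i.e. to show $a\in S_\delta^{\frac n2+\frac{(n-1)\delta}2\plz}$. Writing $v_h(\xi)=\F_h u_h(\xi)e^{iH(\xi)/h}$ so that $a=(2\pi h)^{-n}v_h$, this is precisely the content of the improved estimate \eqref{improved.aest} established inside the proof of Proposition~\ref{prop:quasi}: namely $\|\pa^\alpha v_h\|_{L^\infty}=O(h^{n/2-\delta(n-1)/2-\delta|\alpha|-\ep})$, which after multiplying by $h^{-n}$ gives $\|\pa^\alpha a\|_{L^\infty}=O(h^{-n/2-\delta(n-1)/2-\delta|\alpha|-\ep})$, i.e. exactly order $\frac n2+\frac{(n-1)\delta}2\plz$ in the class $S_\delta$. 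I should note that the derivation of \eqref{improved.aest} in Proposition~\ref{prop:quasi} used nothing about transversality beyond the local projectability in $\xi$ already arranged here, and that the hypothesis $|\pa p|\neq 0$ on $p^{-1}(0)$ together with $p$ characteristic to the $\xi$-projectable $\lag$ gives $\pa_x p\neq 0$, hence $\pa_\xi q\neq 0$ for the conjugated operator $Q=e^{iH/h}\F_h P\F_h^{-1}e^{-iH/h}$, which is what powers the propagation estimate via \cite[Lemma~7.11]{Zw:12}. So this step is essentially a citation of the computation already carried out.

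The one genuinely new point, and the step I expect to require the most care, is verifying that the quasimode hypothesis survives the $e^{i\psi/h}$ conjugation used to move $\gamma$ off the zero section, \emph{including} the fact that $\tilde P$ remains of real principal type with $|\pa\tilde p|\neq 0$ on $\tilde p^{-1}(0)$ and that $\tilde p$ is characteristic to $\tilde\lag$ and locally projectable in $\xi$ near $\tilde\gamma$. The symbol relation $\tilde p(x,\xi)=p(x,\xi-\psi'(x))$ makes the characteristic-set and nonvanishing-gradient statements immediate (the gradient picks up only a shift and a triangular linear change), and projectability of $\tilde\lag$ in $\xi$ near $\tilde\gamma$ follows since $\tilde\gamma$ is off the zero section exactly as in the proof of Proposition~\ref{HM.explicitphase}. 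Once this bookkeeping is in place, one simply applies the already-established off-zero-section case to $\tilde u_h$, obtaining $\tilde u_h(x)=\int a(\theta)e^{i(x\cdot\theta-H(\theta))/h}\,d\theta$ microlocally near $\tilde\gamma$, and then multiplies back by $e^{-i\psi/h}$ to get the stated representation for $u_h$ microlocally near $\gamma$, with the same symbol $a$ in the same class. I would close by remarking, as elsewhere in the paper, that all constants depend on only finitely many $\delta$-Lagrangian seminorms and on the implied constant in $\|Pu_h\|_{L^2}=O(h)$.
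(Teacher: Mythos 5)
Your proposal is correct and follows essentially the same route as the paper: reduce to the off-zero-section case via the $e^{i\psi/h}$ conjugation of Proposition~\ref{HM.explicitphase}, take $a=(2\pi h)^{-n}\F_h u_h\, e^{iH/h}$, and cite the improved estimate \eqref{improved.aest} from the proof of Proposition~\ref{prop:quasi} (whose derivation indeed does not need $\delta<1/2$) to get the symbol order $\tfrac n2+\tfrac{(n-1)\delta}2\plz$. The only difference is that you spell out the bookkeeping the paper leaves implicit, namely that $\tilde u_h$ is an $O(h)$ quasimode of $\tilde P=e^{i\psi/h}Pe^{-i\psi/h}$, which remains of real principal type and characteristic to $\tilde\lag$; this is a welcome but inessential elaboration.
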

\begin{proof}
As in the proof of Proposition \ref{HM.explicitphase}, we begin by microlocalizing $u_h$ to a neighbourhood $U$ of $\gamma$, making the assumption that $\gamma$ does not lie on the zero section, and choosing canonical coordinates so that $\lag$ is locally projectable in $\xi$.
The estimate \eqref{improved.aest} then immediately implies
\[\|\partial^\alpha a\|_{L^\infty} =O(h^{-\frac{n}{2}-\frac{\delta(n-1)}{2}-\delta|\alpha|-\epsilon})\]
as required.

If $\gamma$ lies on the zero section, then we can proceed as in the
end of the proof of Proposition \ref{HM.explicitphase}, noting that
$\tilde{u}_h$ will necessarily be an $O(h)$ quasimode for for the
conjugated operator $\tilde{P}=e^{i\psi/h}Pe^{-i\psi/h}$. 
\end{proof}

\section{Duistermaat's degenerate stationary phase and $L^\infty$
  estimates below threshold}
\label{sec:below.threshold}

\yellow{Let $\lag$ be a Lagrangian with a stable simple singularity
and $u_h$ be a $\delta$-Lagrangian distribution with respect to
$\lag,$ microsupported in a small neighborhood of the singularity
in question and with $\norm{u_h}_{\red{L^2}}=1.$  Assume that $\delta\leq \delta_0$ where $\delta_0$ is
the threshold for the singularity type (as listed in Table~\ref{table:intro-orders}).
Fix a phase function $\phi_0=x\cdot \theta-H(\theta)$ parametrizing the stable simple
singularity.  By Propositions~\ref{HM1} and \ref{prop:quasi} (which we may apply since all
thresholds $\delta_0$ in question are less than $1/2$),
$$
u_h(x)= I(a_0,\phi_0)[x]
$$
where $a_0 \in S_\delta^{\frac n 2+\frac{n\delta}2}(\RR^{2n})$ in general or $a_0 \in
S_\delta^{\frac n2+\frac{(n-1)\delta}2+0}(\RR^{2n})$ if $u_h$ satisfies an
equation as described the latter proposition.  By the classification
of stable simple singularities, there is
$\phi(x,\theta)=\sum x_j f_j(\theta) +f(\theta)$ chosen from
Table~\eqref{table:classification} above that is locally equivalent to
$\phi_0$ in the sense that
$$
\phi_0(x,\theta)=\phi(x',\theta')+\psi(x')
$$
for some local fiber-preserving diffeomorphism
$$
(x,\theta) \mapsto (x(x'), \theta(x',\theta'))
$$
and some $\psi \in \CI.$
We thus change coordinates in the integral $I(a_0,\phi_0)$ from $\theta$ to
$\theta'$ and note that pullback under this coordinate change leaves
$a$ in the same symbol class.  This results in an integral of the form
$I(a, \phi)$ with $a$ a symbol of the same type, times an overall phase factor
$e^{i\psi/h},$ all pulled back by a local diffeomorphism in $x.$  Consequently
in order to prove
Theorem~\ref{theorem:main}, it suffices} to show that an oscillatory
integral with one of the phase functions in
Table~\eqref{table:classification} and with an amplitude lying in
$S^{k/2}_\delta$ (where $k$ is the number of phase variables) \teal{is} $O_{L^\infty}(h^{-\kappa});$ here we have multiplied
through by $h^{n\delta/2}$ resp.\ $h^{((n-1+\ep)\delta/2}$ in the two
cases of a general $\delta$-Lagrangian or a quasimode in order to
eliminate the $\delta$-dependence of the symbol order.  In other
words, pulling out an explicit factor of $h^{-k/2}$ as part of the
normalization of the integral, it will suffice to prove the following:
\begin{theorem}\label{theorem:sufficient}
  Let $$I(x) = h^{-k/2} \int_{\RR^k} a(x,\theta) e^{i\phi/h} \, d\theta,$$
  where $\phi$ is one of the phase functions arising in
  Table~\ref{table:classification}, and where
  $$
a \in S^0_\delta.
$$
For $\delta \in [0, \delta_0],$ where $\delta_0$ is the threshold
value listed in Table~\ref{table:intro-orders}, \blue{there exists $C$ such
that for all $h \in (0,1),$}
$$
\norm{I(x)}_{L^\infty} \leq Ch^{-\kappa}
$$
where $\kappa$ is the order of the caustic listed in Table~\ref{table:intro-orders}.
\end{theorem}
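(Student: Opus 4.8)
The plan is to reduce the estimate for each singularity on the list to a one-dimensional oscillatory integral estimate via the weighted-homogeneity of the phase functions, and then to prove that one-dimensional estimate by a dyadic decomposition that avoids the Malgrange preparation theorem entirely. Concretely, fix one of the phase functions $\phi(x,\theta)=\sum_j x_j f_j(\theta)+f(\theta)$ from Table~\ref{table:classification}. The $(\theta')^2$ block is a nondegenerate quadratic in the inessential variables, so a clean stationary phase argument in those variables contributes a harmless factor $h^{(k-k_0)/2}$ (exactly cancelled by the normalization) and leaves an integral in the essential variables $\theta_1$ (for $A_{m+1}$) or $(\theta_1,\theta_2)$ (for $D^\pm$, $E_6, E_7, E_8$). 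For the $A$ case, after absorbing the $x_j$'s with $j>1$ and the part of $x_1 f_1$ that can be grouped, we are left with estimating, uniformly in the parameters $(c,s)$,
$$
J(c,s)=h^{-1/2}\int_{\RR} a(\theta_1) e^{i(c\theta_1 + s\,\theta_1^{m+2})/h}\, d\theta_1,
$$
and the claim is $\|J\|_{L^\infty_{c,s}}\leq Ch^{-(1/2-1/(m+2))}$ for $\delta\leq 1/(m+2)$. For the $D$ and $E$ cases the analogous reduction produces a two-dimensional integral with the quasi-homogeneous phase from the table and an $x$-dependent linear (plus low-order) perturbation.

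For the one-dimensional estimate, first I would rescale $\theta_1 = h^{1/(m+2)}\eta$, which turns $s\theta_1^{m+2}/h$ into $s\eta^{m+2}$ and $c\theta_1/h$ into $(c h^{1/(m+2)}/h)\eta = c h^{-(m+1)/(m+2)}\eta$; the symbol $a(h^{1/(m+2)}\eta)$ has derivatives in $\eta$ of size $O(h^{-\delta|\alpha|}\cdot h^{|\alpha|/(m+2)}) = O(h^{(1/(m+2)-\delta)|\alpha|})$, which is $O(1)$ precisely when $\delta\le 1/(m+2)$ — this is the origin of the threshold $\delta_0$. After rescaling, $J = h^{-1/2+1/(m+2)}\int a(h^{1/(m+2)}\eta) e^{i(\tilde c\eta+\tilde s\eta^{m+2})}\,d\eta$ with $\tilde c,\tilde s$ real parameters and the amplitude uniformly in a fixed symbol class with $O(1)$ seminorms; so it remains to bound the $h$-free integral $\int a(\eta)e^{i(\tilde c\eta+\tilde s\eta^{m+2})}d\eta$ uniformly in $(\tilde c,\tilde s)$. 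This is a classical van der Corput estimate: the $(m+2)$-th derivative of the phase is $\tilde s\cdot(\text{const})$, so for $|\tilde s|\gtrsim 1$ one gets $O(|\tilde s|^{-1/(m+2)})$ decay, while for $|\tilde s|$ bounded the phase $\tilde c\eta + \tilde s\eta^{m+2}$ has at most $m$ critical points and on the support of $a$ the integral is $O(1)$ by a standard partition into regions where some derivative of the phase is large (a Van der Corput / stationary phase argument with the cutoff $a$ compactly supported). Crucially, because the amplitude is already $O(1)$ in the rescaled variable, \emph{no} further $h^{-\delta}$ penalties are incurred — this is exactly the gain over the Malgrange-based argument.

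For the $D$ and $E$ entries the strategy is the same but the bookkeeping is heavier: after the quadratic stationary phase in $\theta'$, rescale $(\theta_1,\theta_2)$ by the quasi-homogeneous weights that make $f(\theta_1,\theta_2)$ homogeneous of degree $1$ after dividing by $h$ (for $D^\pm_{m+1}$: $\theta_1\sim h^{(m-1)/(2m)}$, $\theta_2\sim h^{1/m}$, giving the weight sum that produces $\kappa = 1/2 - 1/(2m)$; for $E_6,E_7,E_8$ the weights are $(1/3,1/4)$, $(1/3,2/9)$, $(1/3,1/5)$ respectively). The threshold $\delta_0$ is again forced by requiring that the rescaled amplitude $a(\theta_1(\eta_1,\eta_2),\theta_2(\eta_1,\eta_2))$ stay in a fixed symbol class: the smallest rescaling exponent among the essential variables must be $\geq\delta$, which reads off as $1/(m+2)$, $1/(m+1)$, $1/m$, $1/6$, $1/7$, $1/8$ in the six rows — matching the table. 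After rescaling one is left with a two-dimensional $h$-free oscillatory integral $\int a\, e^{i\Psi}$ where $\Psi$ is the quasi-homogeneous normal-form phase plus a perturbation linear in the (rescaled) $x$-parameters; this must be bounded uniformly in those parameters, and the bound is a known consequence of the theory of oscillatory integrals with $ADE$ phases (the Arnol'd singularity index for these normal forms is exactly the $\kappa$ in the table; see \cite{Ar:72a}, \cite{Duistermaat:Oscillatory}).

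The main obstacle I anticipate is precisely this last point: the uniform-in-parameters bound for the $h$-free oscillatory integral with a general (not necessarily vanishing-to-high-order) amplitude in the $D$ and $E$ cases. Unlike the $A_{m+1}$ case, where one genuinely reduces to one variable and van der Corput suffices, the two-variable normal forms require either a careful iterated van der Corput / slicing argument — integrate first in the variable in which the phase is most degenerate, controlling the sublevel-set measures $|\{\theta_2 : |\partial_{\theta_2}^j \Psi| \le \lambda\}|$ uniformly in the parameters and in $\theta_1$, then integrate in $\theta_1$ — or an appeal to the stable Lagrange-equivalence classification to deform to a normal form where these sublevel estimates are explicit. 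I would handle this by the slicing approach, isolating in a separate lemma the statement that for the quasi-homogeneous $\Psi$ of each type the relevant sublevel sets have measure bounded by $C\lambda^{-1/d}$ for the appropriate $d$ uniformly in parameters (this is where the finiteness and ``simplicity'' of the singularity is used — only finitely many critical points can collide), and then assembling the pieces. This is the step that genuinely replaces the Malgrange preparation theorem, and getting the uniformity right — particularly ruling out parameter values at which the critical structure degenerates beyond the normal-form type — is where the real work lies.
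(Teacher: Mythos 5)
Your reduction-and-rescale strategy is reasonable in outline, but it defers the essential difficulty to a step that is not proved and, as stated, does not follow from the results you invoke. After the quasi-homogeneous rescaling $\theta_j=h^{r_j}\eta_j$, the amplitude does have $O(1)$ pointwise derivative bounds when $\delta\le\min_j r_j$, but it is supported on a region of diameter $\sim h^{-r_j}$ in each essential variable, and its total variation is only $O(h^{-\delta})$, not $O(1)$. So already in the $A_{m+1}$ case the van der Corput bound you quote, which involves $\|\partial_\eta a\|_{L^1}$, gives $O(h^{-\delta})$ rather than $O(1)$; a correct argument must combine local van der Corput near the (boundedly many) critical points with nonstationary-phase decay in between, using pointwise derivative bounds and uniformity in all $m$ unfolding parameters (which you cannot simply ``absorb'' into a two-parameter phase $c\theta_1+s\theta_1^{m+2}$). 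For the $D$ and $E$ rows the situation is worse: the uniform two-parameter estimate you need is exactly the ``main obstacle'' you flag, and the classical $ADE$ oscillation-index results of Arnol'd and Duistermaat do not supply it --- they are local statements for a fixed smooth compactly supported amplitude with a large frequency parameter, not for amplitudes in this rough, $h$-dependently supported class. Your proposed sublevel-set lemma is left unproved, and ruling out degeneration of the critical structure as the unfolding parameters vary is precisely the hard part.

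The mismatch shows up concretely in your thresholds: reading $\delta_0$ off as the smallest rescaling exponent of the given normal form yields $1/m$ for \emph{both} $D^{\pm}_{m+1}$ and $1/4$ for $E_6$, which does not match Table~\ref{table:intro-orders} ($1/(m+1)$ vs.\ $1/m$, and $1/6$). The table's thresholds are minima of the homogeneities over all singularities \emph{subordinate} to the given one in Figure~\ref{fig:subordination} (e.g.\ $A_m$ is subordinate to $D^-_{m+1}$ but not to $D^+_{m+1}$, and $A_5$ to $E_6$), and this is a symptom of the missing idea: the paper proves Theorem~\ref{theorem:sufficient} by induction along the subordination diagram. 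One splits $x$-space into $\Omega(h)$ and shells $x=\lambda^{1-s}y$ with $\lambda\in[h,1]$, $y\in\partial\Omega(1)$; nonstationary phase (costing $h^{r_j-\delta}$ per derivative, whence the constraint $\delta\le r_j$ at every stage) handles $x\in\Omega(h)$ and the large-$\eta$ piece; and on the stationary piece, since $y$ lies away from the equisingularity manifold, the phase is locally equivalent to a strictly subordinate normal form, so the inductive hypothesis applied with small parameter $h/\lambda$ gives $(h/\lambda)^{-\kappa'}$ with $\kappa'\le\kappa$, and the pieces reassemble to $O(h^{-\kappa})$. That induction is what replaces both the Malgrange preparation theorem and your unproven uniform lemma, and it is also the reason the subordinate homogeneities enter the thresholds. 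Without it (or a genuine proof of your uniform two-dimensional estimate), the argument is incomplete for every row of the table beyond $A_{m+1}$, and the $A_{m+1}$ case itself needs the repair indicated above.
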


A novelty of the approach here is that
we are unable to employ the Malgrange Preparation Theorem/Mather
Division Theorem as in the
classic treatments with $\delta=0$ \cite[Lemma~2.1.4,
Equation (4.1.3)]{Duistermaat:Oscillatory} and
\cite[Theorem~9.1]{Guillemin-Sternberg1}: the trouble is that the use
of the Preparation Theorem costs numerous derivatives which are hard
to keep track of, and each of these derivatives hitting the amplitude
costs us $h^\delta.$  Since we are trying to obtain a cruder result
(estimates rather than full asymptotics) we are able to use simpler
and more robust methods.  We now describe the method of proof.

\yellow{Recall that we work only with the simple stable caustics in Arnol'd's
classification.  Depending on the overall dimension, each of these
caustics can have an
``equisingularity manifold'' along which the form of the singularity
of the projection is unchanged. In the model cases under discussion
this arises just because the phase is independent of
some of the $x$ variables, in particular, of all the $x_j$ variables
corresponding to vanishing $f_j$ in Table~\ref{table:classification}.
Let $k_0$ be the largest $j$ with $f_j\neq 0,$ so that
$j=m$ for $A_{m+1},$ $j=m$ for $D^\pm_{m+1},$ and $j =5,6,7$ for $E_6,$
$E_7,$ and $E_8$ respectively.  Then near any point in the manifold $\{(x_1,\dots,
x_{k_0})=0\}\subset \RR^n \times \RR^k$ the singularity is of the same type as at the origin,
hence the term ``equisingularity manifold.''  (A definition of
equisingularity applicable
in more general cases, but not needed here, is as the
set of points where the germ of the phase is equivalent (via germs of
mappings) to the singularity at a given point---see \cite[p.243]{Duistermaat:Oscillatory}.)}

\yellow{Away from the equisingularity manifold, the Lagrangian
  projection will have a different singularity than that near the
  origin; the latter singularity is said to be \emph{subordinate} to the one
  at the origin (see \cite[p.255]{Duistermaat:Oscillatory}).  Arnol'd's
  classification comes with a characterization of subordinate
  singularities, encapsulated in the 
``subordination diagram'' of caustics given in
Figure~\ref{fig:subordination}.  Arrows point from singularities to
those types that may possibly arise in a small neighborhood of the origin
in the complement of the equisingularity manifold $\{(x_1,\dots x_{k_0})=0\}.$}
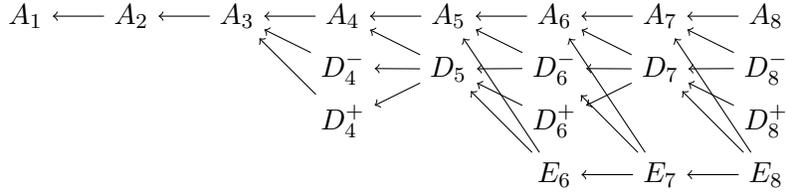
\begin{figure}
\begin{tikzpicture}
 \matrix[matrix of math nodes,column sep={40pt,between origins},row
    sep={20pt,between origins}] (s)
  {
    |[name=A1]| A_1 &|[name=A2]| A_2 &|[name=A3]| A_3 &|[name=A4]| A_4 &|[name=A5]| A_5 &|[name=A6]| A_6 &|[name=A7]| A_7 &|[name=A8]| A_8  \\
    & & & |[name=D4m]| D_4^- & |[name=D5]| D_5 & |[name=D6m]| D_6^- & |[name=D7]| D_7 & |[name=D8m]| D_8^-\\
    & & & |[name=D4p]| D_4^+& & |[name=D6p]| D_6^+& & |[name=D8p]| D_8^+\\
    & & &  & & |[name=E6]| E_6 & |[name=E7]| E_7 & |[name=E8]| E_8\\             
  };
  \draw[->]
  (A2) edge (A1)
  (A3) edge (A2)
  (A4) edge (A3)
  (A5) edge (A4)
  (A6) edge (A5)
  (A7) edge (A6)
      (A8) edge (A7)
  (D4m) edge (A3)
    (D4p) edge (A3)
    (D5) edge (D4m)
        (D5) edge (D4p)
        (D5) edge (A4)
        (D6m) edge (D5)
        (D6p) edge (D5)
        (D6m) edge (A5)
        (E6) edge (A5)
        (E6) edge (D5)
    (D7) edge (D6m)
        (D7) edge (D6p)
        (D7) edge (A6)
        (E7) edge (E6)
        (E7) edge (A6)
        (E7) edge (D6m)
        (D8m) edge (D7)
        (D8p) edge (D7)
        (D8m) edge (A7)
        (E8) edge (E7)
        (E8) edge (A7)
        (E8) edge (D7)
        ;
\end{tikzpicture}
\caption{Subordination diagram of caustics (taken from
    \cite[Figure 1]{Duistermaat:Oscillatory}). \label{fig:subordination}}
\end{figure}

\yellow{We will employ the subordination diagram to prove
  Theorem~\ref{theorem:sufficient} by proceeding inductively to the right
  through the columns of the diagram, proving at each stage that the theorem holds
  for a new set of singularity types based on its validity for all
  subordinate types.}

This means that the steps of our induction are:
\begin{enumerate}
\item $A_1$ (no singularity)
\item $A_2$
\item $A_3$
\item $A_4,$ $D^\pm_4$
\item $A_5,$ $D_5$
  \item $A_6,$ $D^\pm_6,$ $E_6$
  \item $A_7, D_7,E_7$
  \item $A_8,$ $D^\pm_8,$ $E_8.$
\end{enumerate}

\yellow{Another ingredient in our arguments will be the quasi-homogeneity of the phase functions.}
We reproduce for the reader's convenience a table from Duistermaat \cite[Table 4.3.2]{Duistermaat:Oscillatory}
showing the homogeneities of the parametrizing functions $f_j$ and $f$ from
Table~\ref{table:classification}.  These are the exponents $r_j$ and
$s_j$ such that
\begin{equation}\label{totalhomogeneity}
\phi(\lambda^{1-s_1} x_1,\dots, \lambda^{1-s_n} x_n,
\lambda^{r_1}\theta_1,\dots, \lambda^{r_k}\theta_k)=\lambda \phi(x,\theta).
\end{equation}

Note that these homogenities arise as follows in the parametrizations
given above: the $r_\ell$ are simply the inverses of the homogeneities
of the terms in $f(\theta)$ and then the $s_\ell$ are computed by
writing the monomials $f_\ell(\theta)$ as $\theta_1^{s_{\ell 1}}\dots
\theta_k^{s_{\ell k}}$ and then setting
$$
s_\ell=\sum_{j=1}^k s_{\ell j} r_j.
$$

\def\arraystretch{1.5}
\begin{table}
\begin{equation*}
\begin{array}{||l|l|l||}\hline\hline
  \text{Type}\ &  r_1,\dots r_k & s_1,\dots, s_k \\ \hline
A_{m+1} & \frac 1{m+2}, \frac 12, \dots, \frac 12 & \frac
                                                    1{m+2},\dots,
                                                    \frac{m}{m+2},
                                                    0,\dots, 0\\ \hline
  D^\pm_{m+1} & \frac 12 -\frac{1}{2m}, \frac 1m, \frac 12, \dots,
                \frac 12& \frac 12-\frac 1{2m},\frac 1m, \frac 2m,
                          \dots, \frac {m-1}m, 0, \dots, 0\\ \hline
  E_6 &  \frac 13, \frac 14, \frac 12, \dots, \frac 12& \frac 13,
                                                        \frac 14,
                                                        \frac 12,
                                                        \frac 7{12},
                                                        \frac 56, 0,
                                                        \dots, 0\\ 
  \hline E_7 & \frac 13,\frac 29, \frac 12, \dots \frac 12 & \frac 13, \frac 29, \frac 49, \frac 69, \frac 89, \frac 59,0,\dots 0 \\
  \hline E_8 & \frac 13,\frac 15, \frac 12, \dots \frac 12 & \frac 13, \frac 15, \frac 25, \frac 35, \frac{8}{15}, \frac{11}{15},\frac{14}{15},0,\dots 0 \\
\hline\hline
\end{array}
\end{equation*}
\caption{Homogeneities in the parametrizations of the stable singularities.}\label{table:homog}
\end{table}

These homogeneities lead directly to the orders of the relevant
caustics, which are given by
$$
\kappa=\frac 12 k-\sum_{j=1}^k r_j
$$
in each case---see Table~\ref{table:intro-orders} above.  We are able to prove that our $L^2$-normalized
Lagrangian distributions have $\sup$-norm bounds given by
$O(h^{-\kappa})$ in each case up to some threshold value of $\delta$ in
our symbol regularity estimates, and this threshold, interestingly,
seems to depend (at least in our proof) on not just the
homogeneities of the caustic in question from the table above, but
indeed on the homogeneities of caustics \yellow{subordinate to it in
Figure~\ref{fig:subordination}.} 
Our threshold $\delta_0$ for
a singularity (see Table~\ref{table:intro-orders}) is
the \emph{minimum} of the homogeneities $r_j$ for all caustics encountered as
\yellow{we move to the left along arrows of the subordination diagram.}
% \def\arraystretch{1.5}
% \begin{equation}\label{table:orders}
% \begin{array}{||l|l|l||}\hline\hline
%   \text{Type}\ &  \text{Order}\ \kappa & \text{Threshold}\ \delta_0\\ \hline
% A_{m+1} & \frac 12-\frac{1}{m+2}& \frac{1}{m+2}\\ \hline
%   D_{m+1} (m \text{ even}), D^-_{m+1} (m\text{ odd}) & \frac 12 -\frac{1}{2m}& \frac 1{m+1}\\ \hline
%     D^+_{m+1} (m\text{ odd})& \frac 12 -\frac{1}{2m}& \frac 1m\\ \hline
%   E_6 &  \frac 5{12}& \frac 16\\ 
% \hline\hline
% \end{array}
% \end{equation}
Note the distinction in thresholds between $D_{m+1}^\pm$ \yellow{for
  $m$ odd} occurs
\yellow{because $A_m$ is subordinate to $D_{m+1}^-$ but not to $D_{m+1}^+.$}
Meanwhile, the orders in our Table~\ref{table:intro-orders} simply match the orders of caustics in \cite[Table
4.3.2]{Duistermaat:Oscillatory}.

\yellow{With these preliminaries in hand, we proceed to the proof of the theorem.}
\begin{proof}
\yellow{We inductively show that if the result holds for all subordinate types
to the singularity parametrized by $\phi$ then it holds for the
singularity of $\phi$ as well. (The base case of the induction will be discussed at the
end.)}
  
\yellow{Following the notation employed above (and in the proof of \cite[Proposition 4.3.1]{Duistermaat:Oscillatory}), we let
  $k_0$ denote the number of nonzero $f_j$ in the parametrization of
  $\phi$ given in Table~\ref{table:classification}, which is equal to the
  codimension of the equisingularity manifold.  Hence $\phi$ is
  in fact independent of all \yellow{$x$} variables except $x_1,\dots x_{k_0}.$}

  \blue{For $a  \in (0, \infty),$ set}
\begin{equation}\label{Omegadef}
\Omega(a) = \{x: \sum_{j=1}^{k_0} \smallabs{x_j}^{\frac 1{1-s_j}} \leq a\}.
\end{equation}

For simplicity of notation we will use multi-index notation for the
scalings, so, e.g.\
$$
\mu^r \theta\equiv (\mu^{r_1} \theta_1,\dots, \mu^{r_k} \theta_k),
$$
and
$$
\smallabs{r} \equiv \sum_{j=1}^k r_j.
$$
\blue{We will also write
\begin{equation}\label{xscaling}
\mu^{1-s }x=(\mu^{1-s_1} x_1 ,\dots 
\mu^{1-s_{k_0}}x_{k_0}, x_{k_0+1}, \dots x_n),
\end{equation}
i.e., the scaling in this case only applies to the first $k_0$ coordinates.}

We first make a change of variables $\theta=h^r \eta$ (in the
multiindex notation just introduced).  By homogeneity \eqref{totalhomogeneity},
\begin{equation}\label{hom2}
f(\mu^r \eta) =\mu f(\eta)\red{,}\qquad f_j(\mu^r \eta) = \mu^{s_j} f_j(\eta),
\end{equation}
so that
$$
I(x) = h^{-k/2+\smallabs{r}}\int a(x,h^r \eta)\, e^{i \sum h^{s_j-1}
  x_j f_j(\eta)} e^{i f(\eta)}\, d\eta.
$$
Recall from \cite[p.263]{Duistermaat:Oscillatory} that $\nabla f(\eta)=0$ only at $\eta=0.$
Hence we may
obtain convergence of the integral by integrating
by parts repeatedly using the first order differential operator
$$
L_0 \equiv (1+\smallabs{\nabla f}^2)^{-1} (1+\sum \pa_j f(\eta) D_j)
$$
which has the property that $L_0 e^{i f(\eta)}=e^{i f(\eta)}.$
Application of this operator to the remaining parts of the phase
entails no loss in powers of $h$ 
 as long as
$\delta\leq r_j$ (our standing assumption), and
$$
\smallabs{x_j} \leq h^{1-s_j} \text{ for all }\blue{ j=1,\dots, k_0},
$$
i.e., as long as $x \in \Omega(h)$ (as defined in \eqref{Omegadef}).
Moreover, provided $x \in\Omega(h),$ each factor of $L_0$ hitting the exponential term
$e^{i \sum h^{s_k-1}   x_k f_k(\eta)}$ is in fact a sum of terms each bounded
by a multiple of one of the expressions
\begin{equation}\label{decaying}
\frac{(\pa_j f) (\pa_j f_k)}{(\pa_j f)^2}
\end{equation}
outside a large ball.  From \eqref{hom2} we easily compute the
homogeneities of derivatives:
\begin{equation}\label{derivhom}
(\pa_j f)(\mu^r \eta)=\mu^{1-r_j} (\pa_j f)(\eta),\quad (\pa_j f_k)(\mu^r \eta)=\mu^{s_k-r_j} (\pa_j f_k)(\eta),
\end{equation}
hence \eqref{decaying} has negative homogeneities.  We also note that
terms where $L_0$ falls on $a$ have increased decay for similar reasons. Thus, iteration of
the integration by parts renders the integral convergent in $\eta.$
Hence we obtain
$$
I(x) = O(h^{-k/2+\smallabs{r}}) \text{ for } x \in \Omega(h),
$$
which suffices to prove the desired estimate for such values of $x,$
since $\kappa=-k/2+\smallabs{r}.$

It thus remains to prove the desired estimate for $x \in
\Omega(R)\backslash \Omega(h)$ for some $R;$ without loss of
generality we may do a fixed rescaling to take $R=1.$  For any $x \in
\Omega(1) \backslash \Omega(h),$ there exists
$\lambda \in [h,1]$ such that if we set
$x_j=\lambda^{1-s_j} y_j$ \blue{(for $j=1,\dots, k_0$)} we now have $y \in \partial\Omega(1).$

Thus, employing the change of variables $\theta=\lambda^r \eta$ \blue{(in
the notation \eqref{xscaling}),} we obtain
$$
\begin{aligned}
I(\lambda^{1-s} y) &= h^{-k/2} \int a(\lambda^{1-s} y,\theta) e^{i\phi(\lambda^{1-s}
  y,\theta)/h} \, d\theta\\
 &= \lambda^{\smallabs{r}} h^{-k/2}\int a(\lambda^{1-s} y,\lambda^{r} \eta) e^{i\phi(\lambda^{1-s}
   y, \lambda^r \eta)/h} \, d\eta\\
&= \lambda^{\smallabs{r}} h^{-k/2} \int a(\lambda^{1-s} y,\lambda^{r} \eta) e^{i(\lambda/h)\phi( y, \eta)} \, d\eta.
\end{aligned}
$$

We split $I(\lambda^{1-s} y)$ into two pieces by letting $\chi\in
\mathcal{C}_c^\infty(\RR)$ equal $1$ on $(-1, 1)$ and $0$ on
$\RR\backslash (-2,2),$ picking $R>0,$ and expressing $$I(\lambda^{1-s} y)=J_<(\lambda^{1-s} y)+J_>(\lambda^{1-s} y)$$ with 
$$\begin{aligned}
J_<(\lambda^{1-s} y)&\equiv \lambda^\smallabs{r} h^{-k/2} \int \chi(\smallabs{\eta}/R)
a(\lambda^{1-s} y,\lambda^{r} \eta) e^{i(\lambda/h)\phi( y, \eta)} \,
d\eta,\\
J_>(\lambda^{1-s} y)&\equiv \lambda^\smallabs{r} h^{-k/2}\int(1- \chi(\smallabs{\eta}/R)) a(\lambda^{1-s} y,\lambda^{r} \eta) e^{i(\lambda/h)\phi( y, \eta)} \, d\eta,
\end{aligned}
$$

We once again remark that by \eqref{derivhom}, since $\nabla f(\eta)$ is
nonzero for $\eta \neq 0$ and has larger homogeneity than the $\nabla
f_j$'s (since all $s_j<1$), $\nabla_\eta \phi$ is nonzero on
sufficiently large quasi-homogeneous balls, hence if $R$ is
sufficiently large, the denominator of the operator
$$
L_1=\frac{h}{\lambda} \smallabs{\nabla_\eta \phi}^{-2} \sum (\pa_{\eta_j} \phi) D_{\eta_j},
$$
is nonvanishing, and we may use it to integrate by parts in our
expression for $J_>.$  Derivatives falling on the $a$ term each yield
a factor bounded \red{uniformly in $y\in\partial\Omega(1)$} by $(h/\lambda) \lambda^{r_j} h^{-\delta}$ for some
$j;$ since $\lambda<1$ and $\delta\leq r_j,$ this is bounded by
$(h/\lambda)^{1-\delta}$ \red{uniformly in $y\in \partial\Omega(1)$}.  Derivatives falling on the cutoff $\chi$ of course
yield $(h/\lambda),$ which is smaller yet since $h/\lambda\leq 1$.
If we employ a high enough power of $L_1,$ we moreover obtain
convergence of the integral in $\eta,$ again by considerations of homogeneity.
Hence for all $N \red{\geq N_0}$,
$$
J_>(\lambda^{1-s} y) = O( \lambda^{\smallabs{r}} h^{-k/2} (h/\lambda)^{N(1-\delta)})
$$
\red{where $N_0$ only depends on the phase function $\phi$.}
Recalling that $\kappa=k/2-\smallabs{r}$ and that $h/\lambda\leq 1$,
we thus may choose $N\red{\geq N_0}$ to obtain
$$
J_>(\lambda^{1-s} y)=O(h^{-\kappa}),
$$
\red{uniformly for $y\in \partial\Omega(1)$ and $\lambda\in [h,1]$ and hence uniformly for} $x \in \Omega(1)\backslash \Omega(h).$

We now turn to estimating $J_<(\lambda^{1-s}y).$  This term does have
a stationary phase.  To estimate it, we rewrite
$$\begin{aligned}
J_<(\lambda^{1-s} y)&= \lambda^\smallabs{r} h^{-k/2}\big(\frac{h}{\lambda}\big)^{+k/2}\big(\frac{h}{\lambda}\big)^{-k/2}  \int \chi(\smallabs{\eta}/R)
a(\lambda^{1-s} y,\lambda^{r} \eta) e^{i(\lambda/h)\phi( y, \eta)} \,
d\eta\\ &\equiv \lambda^\smallabs{r} h^{-k/2}\big(\frac{h}{\lambda}\big)^{+k/2} K(\lambda^{1-s} y).
\end{aligned}
$$
Note then that the integral expression for $K(\lambda^{1-s} y)$ is once again of the type
that our theorem applies to, but with $(h/\lambda)$ replacing $h$ as
the small parameter, and where we are interested in taking $y$ \yellow{in}
$\pa \Omega(1),$ hence away from the {set  $\mathcal{E}=\{x:x_1=x_2=\ldots=x_{k_0}=0\}$ }, where the phase is most singular.
In particular, since $\lambda <1,$ we do still have $a(\lambda^{1-s} y,\lambda^{r} \eta) \chi(\smallabs{\eta}/R)
\in S^0_\delta,$ compactly supported, uniformly for $\lambda \in
[h,1].$  With $y$ constrained to be near $\pa\Omega(1),$ \teal{and
  hence away from $\mathcal{E},$ the \yellow{projection of the equisingularity manifold through
  the origin},} 
we are guaranteed that the phase must parametrize a singularity
\teal{strictly} further down the
subordination diagram (Figure~\ref{fig:subordination}); \yellow{cf.\
  \cite[Proposition~4.3.1]{Duistermaat:Oscillatory}}.  \yellow{Thus the phase
function $\phi$ is equivalent, locally near any $(y_0,\eta_0)$ at
which it is stationary with $y_0 \in \pa \Omega(1),$ to some other
phase function $\tphi$ where $\tphi$ is one of the phase functions
from Table~\ref{table:classification} parametrizing a singularity
subordinate to the one we started with.  \yellow{Since, as noted at the
beginning of this section, we may change phase function to an
equivalent one by making a change of phase variables (and a coordinate
transformation in the base), we may use our
inductive hypothesis to estimate}
$$
K(\lambda^{1-s} y) = O \big((h/\lambda)^{-\kappa'}\big).
$$
Here $\kappa'\leq \kappa,$ since moving down the subordination diagram reduces the
order of the caustic.}

Thus, recalling that $\kappa=k/2-\smallabs{r},$ and using the facts
that $\lambda \geq h$ and $\kappa\geq \kappa',$ we \red{reassemble} our
estimates for $J_{\gtrless}$ to obtain
$$\begin{aligned}
\smallabs{I(\lambda^{1-s} y)}& \leq C  \lambda^{\smallabs{r}}
h^{-k/2}\big(\frac{h}{\lambda}\big)^{+k/2}\big( \frac h \lambda
\big)^{-\kappa'}+ C h^{-\kappa}\\
&= C \lambda^{-\kappa+\kappa'} h^{-\kappa'}+C h^{-\kappa}\\
&\leq C h^{-\kappa}.
\end{aligned}
$$
To complete the induction, it suffices to establish Theorem \ref{theorem:main} for a  $\delta$-Lagrangian distribution $u_h$ that is microsupported on a projectable subset of the Lagrangian $\lag$. That is, it remains to establish the case $A_1$. The claimed order of $u_h$ in this case is $\kappa=0$, with threshold $\delta=1$. Due to the breakdown of stationary phase asymptotics for $\delta \geq 1/2$, it is simplest to  use the particular representation $u_h=ae^{i\phi/h}$ obtained in Proposition \ref{HM2} and Proposition \ref{HM2.quasi} for $\delta$-Lagrangian distributions and quasimodes respectively. In either case, we have $\|u_h\|_{L^\infty}=\|a\|_{L^\infty}$, and the desired estimates on $\|u_h\|_{L^\infty}$ follow.
\end{proof}

\section{Beyond the $\delta_0$ threshold}\label{sec:beyond}

In this section, we determine the sharp $L^\infty$ estimates for the
situation described in Theorem~\ref{theorem:main}, but now with
$\delta\in [\delta_0,1]$ beyond the threshold of that theorem. We work
in the simplest nontrivial case, that of the fold caustic
$A_2$ in $\RR^1.$ \yellow{This caustic is famously associated
  with the asymptotics of the Airy
  function \[\mathrm{Ai}(x)=\frac 1{2\pi}\int_{-\infty}^\infty e^{i(x\theta+\theta^3/3)}\,
    d\theta,\] as the phase of the integral has an $A_2$ singularity.}

\begin{theorem}\label{theorem:beyond}
Let $\delta \in [0,1].$  Let $u_h$ be a compactly supported $\delta$-Lagrangian distribution with respect to the
Lagrangian $$\{x=\xi^2\} \subset T^*\RR^1 $$  Then there exists $C=C_\delta$ such that for $h \in (0,1)$, we have
\red{
\begin{equation}
\label{eq:beyond}
\frac{\smallnorm{u_h}_{L^\infty}}{\smallnorm{u_h}_{L^2}} \leq
\begin{cases}
$$Ch^{-(1+3\delta)/6}\quad \textrm{if }\delta\in [0,1/3]$$\\
$$Ch^{-(1+\delta)/4}\quad \textrm{if }\delta\in [1/3,1]$$
\end{cases}
\end{equation}
}
and \red{these} estimate\red{s are} sharp.
\end{theorem}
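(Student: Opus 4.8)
The whole analysis takes place on the Fourier side, where the parabola $\lag=\{x=\xi^2\}$ is a \emph{graph}, $x=\partial_\xi(\xi^3/3)$: the fold is a caustic only for the projection to the $x$-axis, while $\lag$ is projectable over the $\xi$-axis. A direct computation with $\F_h$ (this is the content, in this explicit model, of the proofs of Propositions~\ref{HM1} and \ref{HM2}) shows that a compactly supported family $u_h$ with $\|u_h\|_{L^2}=1$ is $\delta$-Lagrangian with respect to $\lag$ if and only if
$$
u_h(x)=\int_\RR a(\theta)\,e^{i(x\theta-\theta^3/3)/h}\,d\theta
$$
(up to an overall $x$-dependent unimodular factor), where $a$ is compactly supported modulo $O(h^\infty)$, $\|a\|_{L^2}$ is comparable to $h^{-1/2}$, and $\|a^{(N)}\|_{L^2}\le C_N h^{-N\delta-1/2}$ for all $N$: indeed $\F_h\,(x-(hD)^2)^N u_h=c_N h^{N+1}\,\F_h(u_h)\cdot a^{(N)}/a$ up to constants, so these $L^2$ bounds are exactly the iterated regularity of the definition. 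With this normalization, the case $\delta\in[0,1/3]$ of the claimed upper bound is nothing but Theorem~\ref{theorem:main} specialized to the $A_2$ singularity in dimension $n=1$, where $\kappa=\tfrac12-\tfrac13=\tfrac16$, $\delta_0=\tfrac13$, and $\kappa+\tfrac{n\delta}2=\tfrac16+\tfrac\delta2=\tfrac{1+3\delta}6$; no new work is needed there.

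For the upper bound when $\delta\in[1/3,1]$ the plan is to decompose $a$ at the scale $h^\delta$, the finest scale on which a $\delta$-Lagrangian amplitude can live. Using a smooth partition of unity subordinate to a cover of $\mathrm{supp}\,a$ by intervals $I_\nu$ of length $h^\delta$ centered at $\theta_\nu$, write $a=\sum_\nu a_\nu$ with $\sum_\nu\|a_\nu\|_{L^2}^2$ comparable to $h^{-1}$, and $u_h=\sum_\nu u_h^\nu$, $u_h^\nu(x)=\int a_\nu(\theta)e^{i(x\theta-\theta^3/3)/h}\,d\theta$. Each piece obeys the trivial bound $|u_h^\nu(x)|\le h^{\delta/2}\|a_\nu\|_{L^2}$, and---since $\partial_\theta(x\theta-\theta^3/3)=x-\theta^2$---repeated integration by parts gives $|u_h^\nu(x)|\le C_M\,h^{\delta/2}\|a_\nu\|_{L^2}\big(1+|x-\theta_\nu^2|/(h^{1-\delta}+h^\delta|\theta_\nu|)\big)^{-M}$ for every $M$. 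Hence only the $\nu$ for which $I_\nu$ meets the set $E_x=\{\theta:|x-\theta^2|\le C(h^{1-\delta}+h^\delta|\theta|)\}$ contribute appreciably; one checks $|E_x|=O(h^{(1-\delta)/2}+h^\delta)=O(h^{(1-\delta)/2})$ --- here is where $\delta\ge\tfrac13$ enters, since then $h^\delta\le h^{(1-\delta)/2}$ --- so the number of such $\nu$ is $O(1+h^{(1-\delta)/2}/h^\delta)=O(h^{(1-3\delta)/2})$ (again using $\delta\ge\tfrac13$, so this is $\ge 1$). Cauchy--Schwarz in $\nu$ over the active indices then yields, uniformly in $x$,
$$
|u_h(x)|\le C\,h^{\delta/2}\big(h^{(1-3\delta)/2}\big)^{1/2}\big(h^{-1}\big)^{1/2}=C\,h^{-(1+\delta)/4},
$$
the bound being saturated only in the caustic regime $|x|\le Ch^{1-\delta}$.

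For sharpness I would exhibit explicit extremizers, whose form changes across the threshold. When $\delta\in[0,1/3]$, take $a(\theta)=c\,h^{-1/2-\delta/2}\psi(\theta h^{-\delta})$ with $\psi\ge0$ smooth and $\psi\equiv1$ on $[-1,1]$; this meets the regularity bounds with equality, and after substituting $\theta=h^{(1-3\delta)/3}\sigma$ one gets $u_h(0)=c'\,h^{-1/2+\delta/2}h^{(1-3\delta)/3}\int\psi(h^{(1-3\delta)/3}\sigma)e^{-i\sigma^3/3}\,d\sigma$, which tends to $c''h^{-(1+3\delta)/6}$ since $\int_{|\sigma|\le R}e^{-i\sigma^3/3}\,d\sigma\to2\pi\,\mathrm{Ai}(0)\neq0$ as $R\to\infty$. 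When $\delta\in[1/3,1]$, the extremizer instead \emph{cancels} the cubic phase: take $a(\theta)=c\,h^{-1/2-(1-\delta)/4}e^{i\theta^3/3h}\psi(\theta h^{-(1-\delta)/2})$; the inequality $(1-\delta)/2\le\delta$ (equivalent to $\delta\ge\tfrac13$) is precisely what keeps the terms $(\theta^2/h)^k$ produced by Leibniz's rule in $a^{(N)}$ within $\|a^{(N)}\|_{L^2}\le C_N h^{-N\delta-1/2}$, and, the phases cancelling, $u_h(0)=c\,h^{-1/2-(1-\delta)/4}\int\psi(\theta h^{-(1-\delta)/2})\,d\theta$ is comparable to $h^{-(1+\delta)/4}$. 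In both cases $u_h$ decays rapidly in $x$ away from $0$ by nonstationary phase (its semiclassical wavefront set projects into a shrinking neighborhood of $x=0$), so multiplying by a fixed cutoff $\equiv1$ near $x=0$ --- which, being a fixed pseudodifferential operator, preserves $\delta$-Lagrangian regularity up to $O(h^\infty)$ --- produces a genuinely compactly supported $\delta$-Lagrangian distribution with the same $L^2$ norm and the same value at $x=0$.

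The main obstacle is the uniform-in-$x$ bookkeeping for the $\delta\in[1/3,1]$ upper bound: one must organize the nonstationary-phase estimates for the $u_h^\nu$ dyadically in the size of $|x-\theta_\nu^2|$ relative to the threshold $h^{1-\delta}+h^\delta|\theta_\nu|$, sum the resulting geometric series, and count the active intervals $I_\nu$ correctly --- noting in particular that near the caustic the active $\theta$-set does \emph{not} shrink below width $h^{(1-\delta)/2}$, a plateau that is exactly the mechanism behind the new exponent. A secondary point is checking, for the $\delta>\tfrac13$ extremizer, that \emph{all} the iterated $L^2$ seminorms of $a=e^{i\theta^3/3h}\psi(\theta h^{-(1-\delta)/2})$ stay within range; this again reduces to the inequality $\delta\ge\tfrac13$, now applied to every mixed term in the differentiation.
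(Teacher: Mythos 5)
Your proposal is correct in substance, and for the two ingredients it shares with the paper (the reduction for $\delta\in[0,1/3]$ to Theorem~\ref{theorem:main} with $\kappa=1/6$, and the extremizers --- in particular the phase-cancelling amplitude $e^{i\theta^3/3h}\psi(\theta h^{-(1-\delta)/2})$ with normalization $h^{-3/4+\delta/4}$ for $\delta\ge 1/3$, whose admissibility indeed reduces to $\theta^2/h\le h^{-\delta}$ on the support and $(1-\delta)/2\le\delta$) it coincides with the paper's argument; the harmless sign slip $h^{-1/2+\delta/2}$ in your Airy-regime computation cancels against the misplaced Jacobian and the final exponent $h^{-(1+3\delta)/6}$ is right. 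Where you genuinely diverge is the upper bound for $\delta\in[1/3,1]$: the paper does not decompose into wave packets at all. Starting from the same representation \eqref{uform} with $\|\partial^\alpha a\|_{L^2}=O(h^{-1/2-\delta|\alpha|})$, it performs a \emph{single} integration by parts with the complex-regularized operator $L=(x-\theta^2+ih^{1-\delta})^{-1}(hD_\theta+ih^{1-\delta})$, applies Cauchy--Schwarz, and finishes with two explicit elementary integrals (Lemma~\ref{lemma:intest}), $\int((x-\theta^2)^2+\ep^2)^{-1}d\theta=O(\ep^{-3/2})$ and $\int|\theta|((x-\theta^2)^2+\ep^2)^{-1}d\theta=O(\ep^{-1})$ with $\ep=h^{1-\delta}$; the condition $\delta\ge1/3$ enters only in comparing the two resulting powers $h^{-(1+\delta)/4}$ and $h^{-(1-\delta)/2}$. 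This is shorter and automatically uniform in $x$. Your scale-$h^\delta$ packet decomposition with nonstationary-phase decay off $E_x=\{|x-\theta^2|\lesssim h^{1-\delta}+h^\delta|\theta|\}$, the count of $O(h^{(1-3\delta)/2})$ active packets, and Cauchy--Schwarz in $\nu$ reproduces the same exponent and has the virtue of exhibiting the mechanism (the $h^{(1-\delta)/2}$ plateau of active frequencies at the caustic, matching the extremizer); but note that, as stated, your per-packet bound $|u_h^\nu(x)|\le C_M h^{\delta/2}\|a_\nu\|_{L^2}(1+\cdots)^{-M}$ is not literally available, since the hypothesis only controls $\|a^{(k)}\|_{L^2}$ globally, not $\|a^{(k)}\|_{L^2(I_\nu)}$ by $h^{-k\delta}\|a_\nu\|_{L^2}$ packet by packet. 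This is exactly the ``bookkeeping'' you flag, and it is fixable in the standard way (carry $\sum_{k\le M}h^{k\delta}\|\partial^k(a\chi_\nu)\|_{L^2}$ through the decay factor and use $\sum_\nu\|\partial^k(a\chi_\nu)\|_{L^2}^2\lesssim h^{-2k\delta-1}$ together with the summable tails in $\nu$), but it must be done before the Cauchy--Schwarz step; the paper's regularized-integration-by-parts route avoids the issue entirely.
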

\begin{proof}
\red{Theorem~\ref{theorem:main} for the singularity $A_2$ in dimension $n=1$ gives the estimate 
	\begin{equation}
	\label{eq:below}\frac{\|u_h\|_{L^\infty}}{\|u_h\|_{L^2}}=O(h^{-1/6-\delta/2})
\end{equation} 
for $\delta\in[0,1/3]$, which is saturated at $x=0$ by the example
$$u_h(x)=\int_\RR \chi(\theta/h^\delta)e^{i(x\theta+\theta^3)/h}\, d\theta $$
where $\chi\in\mathcal{C}_c^\infty(\RR)$ is a $h$-independent bump function, nonvanishing at $0$. 
To see this, we observe that $\|u_h\|_{L^2}=O(h^{1/2+\delta/2})$ by Plancherel, and $u_h$ is a $\delta$-Lagrangian distribution by \eqref{dlagcriterion}. Direct computation then yields 
$$u_h(0)=\int_\RR \chi(\theta/h^\delta)e^{i\theta^3/h}\, d\theta= h^{1/3}\int_\RR \chi(h^{1/3-\delta}\theta)e^{i\theta^3}\, d\theta \sim Ch^{1/3}$$
for $C\neq 0$ as $h\downarrow 0$.
Hence \begin{equation}
\frac{\|u_h\|_{L^\infty}}{\|u_h\|_{L^2}}\gtrsim \frac{h^{1/3}}{h^{1/2+\delta/2}}=h^{-1/6-\delta/2}.
\end{equation}
At the threshold $\delta=1/3$, \eqref{eq:below} coincides with \eqref{eq:beyond}, and both give the bound $$\frac{\|u_h\|_{L^\infty}}{\|u_h\|_{L^2}}=O(h^{-1/3}).$$}

We now prove \red{the estimate \eqref{eq:beyond} in the case $\delta\geq 1/3$}. Recall from the beginning of the proof of Proposition~\ref{HM1} that
for \emph{any} $\delta \in [0,1]$ if we parametrize our Lagrangian
with the special phase function
$\teal{\phi(x,\theta)=x\theta-\theta^3/3},$
we arrive at the oscillatory integral representation
\begin{equation}\label{uform}u_h(x; h) =\int_\RR a(\theta;h)e^{i(x\theta-\theta^3/3)/h}\,
d\theta, \end{equation} where $a\in \mathcal{C}_c(\RR)$ satisfies the
estimate $\|\partial^\alpha a\|_{L^2}=O(h^{-1/2-\delta|\alpha|})$
\eqref{vreg} as
well as the Sobolev embedding estimate \eqref{vest}
$\|\partial^\alpha a\|_{L^\infty}=O(h^{-(1+\delta)/2})$. (Note that
in the notation of \eqref{vreg}, \eqref{vest}, we have $a=h^{-1}v_h,$
with the factor of $h^{-1}$ arising from the inverse semiclassical
Fourier transform.)

We \red{now integrate by parts in \eqref{uform} using an $h$-dependent regularization of the operator $(x-\theta^2)^{-1}hD_\theta$ which stabilizes the exponential factor in the integrand, but is singular at the caustic. To this end, we }introduce the differential operator
$$\teal{L=(x-\theta^2+ih^{1-\delta})^{\red{-1}}(hD_\theta +ih^{1-\delta})}.$$ This operator stabilizes the exponential factor in
the integrand and has transpose
$$\teal{L^T=(x-\theta^2+ih^{1-\delta})^{-1}(-hD_\theta+ih^{1-\delta})-\frac{2h\theta}{(x-\theta^2+ih^{1-\delta})^2}.}$$
Integration by parts shows $u_h(x)$ is bounded above by
\begin{equation*}
h\int_\RR \frac{|D_\theta a|}{|x-\theta^2+ih^{1-\delta}|}\, d\theta + h^{1-\delta}\int_\RR \frac{|a|}{|x-\theta^2+ih^{1-\delta}|}\, d\theta +2h\int_\RR \frac{|\theta a|}{|x-\theta^2+ih^{1-\delta}|^2}\, d\theta\label{ibpest}
\end{equation*}
Using Cauchy--Schwarz, we obtain
\begin{align*}
\smallabs{u_h(x)} &\lesssim (h\|D_\theta a\|_{L^2} +h^{1-\delta} \|a\|_{L^2})\left(\int_\RR \frac{1}{(x-\theta^2)^2+h^{2-2\delta}}\, d\theta\right)^{1/2}\\
&+ h\|a\|_{L^\infty} \int_\RR \frac{|\theta|}{(x-\theta^2)^2+h^{2-2\delta}} \, d\theta  \\
&\lesssim h^{1/2-\delta} \left(\int_\RR
                                                                                             \frac{1}{(x-\theta^2)^2+h^{2-2\delta}}\,d\theta\right)^{1/2} + h^{(1-\delta)/2}\int_\RR \frac{|\theta|}{(x-\theta^2)^2+h^{2-2\delta}}\, d\theta.
\end{align*}
We now estimate these integrals as follows.
\noqed
\end{proof}
\begin{lemma}
	\label{lemma:intest}
	We have the following two integral estimates, uniform for $x
        \in \RR$
         as $\epsilon\rightarrow 0^+$.
	\begin{align}
	\label{eq:intest1}\int_\mathbb{R} \frac{1}{(x-\theta^2)^2+\ep^2}\, d\theta&= O(\ep^{-3/2})\\
	\label{eq:intest2}\int_\mathbb{R} \frac{|\theta|}{(x-\theta^2)^2+\ep^2} \, d\theta&= O(\ep^{-1}).
	\end{align}
      \end{lemma}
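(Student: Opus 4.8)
The plan is to reduce both integrals to the half-line $\theta>0$ using that the integrands are even in $\theta$, and then to localize the analysis near the turning points $\theta=\pm\sqrt x$ (present only when $x>0$), controlling the contributions there by a case split on the size of $x$ relative to $\ep$.

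Estimate \eqref{eq:intest2} should require essentially no work: the substitution $u=\theta^2$ turns $2\int_0^\infty |\theta|\big((x-\theta^2)^2+\ep^2\big)^{-1}\,d\theta$ into $\int_0^\infty\big((x-u)^2+\ep^2\big)^{-1}\,du$, which is at most $\int_\RR\big((x-u)^2+\ep^2\big)^{-1}\,du=\pi/\ep$ for every $x$.

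For \eqref{eq:intest1} I would split $\int_0^\infty(\cdots)\,d\theta$ at $\theta^2=2|x|$. On the outer piece $\theta^2\ge 2|x|$ one has $|x-\theta^2|\ge\theta^2/2$, so the integral is bounded by $\int_0^\infty d\theta/(\tfrac14\theta^4+\ep^2)$, which is $C\ep^{-3/2}$ by the scaling $\theta=\ep^{1/2}\rho$; this piece is uniform in $x$. On the inner piece $\theta^2\le 2|x|$ I would distinguish two regimes. If $|x|\le\ep$, the denominator is $\ge\ep^2$ while the interval has length $\le\sqrt{2\ep}$, giving a bound $\le\sqrt2\,\ep^{-3/2}$. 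If $x>\ep$, the turning point $\sqrt x$ lies in the inner interval and I would factor $x-\theta^2=(\sqrt x-\theta)(\sqrt x+\theta)$: on $|\theta-\sqrt x|\le\tfrac12\sqrt x$ the factor $\sqrt x+\theta$ is comparable to $\sqrt x$, so that sub-integral is at most $\int_\RR ds/(c\,x s^2+\ep^2)=C/(\sqrt x\,\ep)\le C\ep^{-3/2}$ since $x>\ep$; on the complementary part of the inner interval one has $|x-\theta^2|\gtrsim x$, so its contribution is $\lesssim\sqrt x/x^2=x^{-3/2}\le\ep^{-3/2}$. Collecting the pieces gives \eqref{eq:intest1} with a constant independent of $x$.

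The one genuinely delicate point is the uniformity as $x\downarrow 0$: a single rescaling $\theta=\sqrt x\,\sigma$ would only yield a bound of order $x^{-1/2}\ep^{-1}$, which degenerates at $x=0$, so it is essential to trade the turning-point singularity against the shrinking of the inner interval, which is exactly what the threefold split (outer tail / small-$x$ core / large-$x$ core) accomplishes. Beyond that case analysis everything reduces to the two elementary scaling integrals $\int_\RR ds/(as^2+\ep^2)$ and $\int_0^\infty d\theta/(\theta^4+\ep^2)$, so no substantial computation remains.
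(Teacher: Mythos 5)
Your argument is correct, and for \eqref{eq:intest1} it takes a genuinely different route from the paper. The paper's proof rescales $\theta=\ep^{1/2}\eta$ once and for all, reducing \eqref{eq:intest1} to the uniform boundedness of the one-parameter family $M(\alpha)=\int_\RR\bigl((\eta^2+\alpha)^2+1\bigr)^{-1}d\eta$ with $\alpha=-x/\ep$, which it then establishes by an explicit evaluation (contour integration gives $\pi\,\Re(\alpha+i)^{-1/2}$, bounded uniformly in $\alpha$); your proof instead stays entirely real-variable and elementary, splitting at $\theta^2=2|x|$ and trading the turning-point singularity against either the Lorentzian width $\ep$ (via $(x-\theta^2)^2\gtrsim x\,(\theta-\sqrt x)^2$ near $\theta=\sqrt x$) or the shortness of the inner interval (when $|x|\le\ep$). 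The paper's route is shorter and yields the exact profile of the bound, while yours avoids any explicit evaluation and is more robust, since it only uses the factorization of the phase near its critical point. For \eqref{eq:intest2} your substitution $u=\theta^2$ is essentially the paper's computation (the paper just writes the arctangent antiderivative directly). One small bookkeeping point: as written, your inner-piece case split ("$|x|\le\ep$" versus "$x>\ep$") does not cover $x<-\ep$; but for $x\le 0$ one has $|x-\theta^2|\ge\theta^2$ for all $\theta$, so the outer-tail bound $\int_0^\infty d\theta/(\theta^4+\ep^2)=O(\ep^{-3/2})$ already handles the whole line there, and a one-line remark closes this case without affecting anything else.
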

\begin{proof}
We evaluate the first integral by changing variables to set
$\eta=\theta \ep^{-1/2}.$  This yields
$$
\teal{\ep^{-3/2} M(-x\ep^{-1})},
$$
where
$$
M(\alpha)\equiv \int_{-\infty}^\infty \frac{1}{(\eta^2+\alpha)^2+1}\, d\eta.
$$
It thus suffices to show that $\sup_{\alpha \in
  \RR}\smallabs{M(\alpha)}<\infty.$  Indeed $M$ is manifestly
uniformly bounded
for $\alpha\geq 0;$ to deal with negative $\alpha,$ we note that the
integral can be evaluated explicitly by contour integration to yield
$\pi \Re (\alpha+i)^{-1/2},$ which is indeed uniformly bounded for
$\alpha \in \RR.$ 

The integral \eqref{eq:intest2} is simply
\teal{	\begin{align*}
	\int_\mathbb{R} \frac{|\theta|}{(x-\theta^2)^2+\ep^2}\,d\theta &= 2\int_0^\infty\frac{\theta}{(x-\theta^2)^2+\ep^2}\,d\theta \\
	&= \ep^{-1}\left[\arctan\left(\frac{\theta^2-x}{\ep}\right)\right]^{\infty}_0\\
	&\leq \pi \ep^{-1}\qed
	\end{align*}
}
        \noqed
  \end{proof}

% \begin{proof}
% 	We decompose the domain of integration in \eqref{eq:intest1} into the following sets
% 	\[S_k:=\{\theta: k\ep\leq |\theta^2-x|\leq (k+1)\ep\}.\]
% 	Then
% 	\sg{The decomposition to bound the first integral was originally dyadic, but this turned out to be unnecessary.}
% 	\begin{align*}
% 	\int_\mathbb{R} \frac{1}{(x-\theta^2)^2+\ep^2}\, d\theta&= \sum_{k=0}^\infty \int_{S_k}\frac{1}{(x-\theta^2)^2+\ep^2}\, d\theta\\
% 	&\leq \epsilon^{-2}\sum_{k=0}^\infty \frac{m(S_k)}{1+k^2}\\
% 	&\leq \epsilon^{-2}\sum_{k=0}^\infty \frac{2\ep^{1/2}}{1+k^2}\\
% 	&\lesssim \ep^{-3/2}.
% 	\end{align*}
% 	The integral \eqref{eq:intest2} can in fact be computed explicitly as
% 	\begin{align*}
% 	\int_\mathbb{R} \frac{|\theta|}{(x-\theta)^2+\ep^2}\,d\theta &= 2\int_0^\infty\frac{\theta}{(x-\theta)^2+\ep^2}\,d\theta \\
% 	&= \ep^{-1}\left[\arctan\left(\frac{\theta^2-x}{\ep}\right)\right]^{\infty}_0\\
% 	&\leq \pi \ep^{-1}.
% 	\end{align*}
% \end{proof}
As a consequence of these estimates, and since we are taking $\delta
\geq 1/3,$  we now obtain
\begin{align*}
\smallabs{ u_h(x)} & \lesssim h^{1/2-\delta}\cdot h^{-3/4+3\delta/4}+h^{(1-\delta)/2}\cdot h^{\delta-1}\\
&= O(h^{-(1+\delta)/4})
\end{align*}
uniformly for $x \in\RR$  for any $\delta \geq 1/3.$  This is the
desired upper bound.

To show that the estimate is sharp, we simply remark that our estimate
is saturated by the $\delta$-Lagrangian distribution given by \eqref{uform} with amplitude 
$$a=h^{(\delta-3)/4}\chi(\theta/h^{(1-\delta)/2})e^{i\theta^3/3h}$$
where $\chi$ is a $h$-independent bump function, nonvanishing at $0$.
This $a$ has $L^2$ norm $O(h^{-1/2}),$ hence $\smallnorm{u_h}_{L^2}$ is
uniformly bounded, by Plancherel.  Moreover, \teal{we have 
\begin{equation}
\|\partial_\theta^\alpha a\|_{L^2}=O(h^{-1/2-\delta|\alpha|})
\end{equation}} as
$\theta^2/h \leq h^{-\delta}$ in the support of $a,$ hence \teal{$u_h$}
is indeed \teal{an $L^2$ bounded} $\delta$-Lagrangian distribution \teal{by \eqref{dlagcriterion}}.
On the other hand, we may explicitly compute
$$u(0)=h^{(\delta-3)/4}\int \chi(\theta/h^{(1-\delta)/2})\gtrsim
h^{(\delta-3)/4}\cdot h^{(1-\delta)/2}=h^{-(1+\delta)/4},$$ thereby
saturating our upper bound.\qed

\bibliographystyle{abbrv} 
\bibliography{all}

\end{document}